\newcommand{\nc}{\newcommand}
\newcommand{\rnc}{\renewcommand}
\numberwithin{equation}{section}
\theoremstyle{plain}
\newtheorem{lemma}{Lemma}[subsection]
\newtheorem{prop}[lemma]{Proposition}
\newtheorem{theorem}[lemma]{Theorem}
\newcommand{\Prop}{\begin{prop}}
	\newcommand{\enprop}{\end{prop}}
\newcommand{\Lemma}{\begin{lemma}}
	\newcommand{\enlemma}{\end{lemma}}
\newcommand{\Th}{\begin{theorem}}
	\newcommand{\enth}{\end{theorem}}
\newtheorem{corollary}[lemma]{Corollary}
\newcommand{\Cor}{\begin{corollary}}
	\newcommand{\encor}{\end{corollary}}
\newtheorem{definition}[lemma]{Definition}
\newcommand{\Def}{\begin{definition}}
	\newcommand{\edf}{\end{definition}}
\newtheorem{sublemma}[lemma]{Sublemma}
\newcommand{\Sublemma}{\begin{sublemma}}
	\newcommand{\ensub}{\end{sublemma}}
\theoremstyle{definition}
\newtheorem{remark}[lemma]{Remark}
\newtheorem{remarks}[lemma]{Remarks}
\newtheorem{example}[lemma]{Example}
\newtheorem{Convention}[lemma]{Convention}
\newcommand{\Conv}{\begin{Convention}}
	\newcommand{\enconv}{\end{Convention}}
\nc{\Rem}{\begin{remark}}
	\nc{\enrem}{\end{remark}}
\nc{\rmkend}{\hfill$\triangledown$}
\nc{\defend}{\hfill$\triangle$}
\newcommand{\on}{\operatorname}
\nc{\be}{\begin{enumerate}}
	\nc{\ee}{\end{enumerate}}
\newcommand{\eq}{\begin{eqnarray}}
	\newcommand{\eneq}{\end{eqnarray}}
\nc{\bc}{\begin{cases}}
	\nc{\ec}{\end{cases}}
\newcommand{\eqn}{\begin{eqnarray*}}
	\newcommand{\eneqn}{\end{eqnarray*}}
\newcommand{\ba}{\begin{array}}
	\newcommand{\ea}{\end{array}}
\newcommand{\arxiv}[1]{\href{http://arxiv.org/abs/#1}{\tt arXiv:\nolinkurl{#1}}}
\DeclarePairedDelimiter\norm{\lvert}{\rvert} 
\DeclarePairedDelimiter\Norm{\lVert}{\rVert}
\nc{\Rp}{\Phi^+}
\nc{\mb}{\underline{m}}
\nc{\nacts}{\triangleleft} 
\nc{\acts}{\mathrel{\raisebox{2pt}{$\scaleobj{0.65}{\varolessthan}$}}}
\nc{\lacts}{\mathrel{\hat{\raisebox{2pt}{$\scaleobj{0.65}{\varolessthan}$}}}}
\nc{\lcirc}{\mathrel{\hat{\circ}}}
\nc{\tdelta}{{}^\theta\Delta} 
\nc{\tnabla}{{}^\theta\nabla}
\nc{\tL}{{}^\theta L}
\nc{\wor}{J^\bullet}
\nc{\twor}{{}^\theta J^\bullet}
\nc{\ch}{\operatorname{ch}}
\nc{\gdim}{\operatorname{dim}_q}
\nc{\word}{J^\theta}
\nc{\good}{J^\beta_+}
\nc{\tgood}{{}^\theta J^\beta_+}
\nc{\gooda}{J^\bullet_+}
\nc{\tgooda}{{}^\theta J^\bullet_+}
\nc{\tgoodb}{{}^\theta J^\bullet_{+,0}}
\nc{\tgoodc}{{}^\theta J^\bullet_{+,c}}
\nc{\tw}{{}^\theta w}
\nc{\EK}{{}^\theta \mathcal{B}(\mathfrak{g})}
\nc{\Qq}{\mathcal{K}}
\nc{\Ql}{\mathcal{A}}
\nc{\EKm}{{}^\theta V(\lambda)}
\nc{\EKmb}{{}^\theta \mathbf{V}(\lambda)} 
\nc{\EKmz}{{}^\theta V}
\nc{\EKmi}{\EKm_{\Ql}} 
\nc{\EKmiz}{\EKmz_{\Ql}} 
\nc{\tcat}{{}^\theta \gamma}
\nc{\Um}{U_q^-(\mathfrak{g})}
\nc{\EKmiu}{\EKmi^{\mathrm{up}}}
\nc{\EKmil}{\EKmi^{\mathrm{low}}}
\nc{\EKmiuz}{\EKmiz^{\mathrm{up}}}
\nc{\EKmilz}{\EKmiz^{\mathrm{low}}}
\nc{\EKup}{\ttt\mathbf{V}_{\Ql}^{\mathrm{up}}}
\nc{\EKlow}{\ttt\mathbf{V}_{\Ql}^{\mathrm{low}}}
\nc{\oklrn}[1]{{}^\theta \mathcal{R}_{#1}(\bm \lambda)}
\nc{\Fn}{F_i^{(n)}}
\nc{\En}{E_i^{(n)}}
\nc{\Fns}{(F_i^*)^{(n)}}
\nc{\Ens}{(E_i^*)^{(n)}}
\nc{\ff}{\mathbf{f}}
\nc{\ffd}{\mathbf{f}^*}
\nc{\fint}{\mathbf{f}_\Ql}
\nc{\fintd}{\mathbf{f}^*_\Ql}
\nc{\FF}{\mathcal{F}}
\nc{\tFF}{{}^\theta\FF(\lambda)}
\nc{\tFFlow}{{}^\theta\FF(\lambda)_{\Ql}^{\mathrm{low}}}
\nc{\tFFup}{{}^\theta\FF(\lambda)_{\Ql}^{\mathrm{up}}}
\nc{\tFFz}{{}^\theta\FF}
\nc{\FFd}{\mathcal{F}^*}
\nc{\tFFd}{{}^\theta\FFd}
\nc{\Qplus}{\mathsf{Q}_+}
\nc{\Qmin}{\mathsf{Q}_-}
\nc{\tQplus}{\Qplus^\theta} 
\nc{\ttt}{{}^\theta}
\nc{\tchq}{\ttt\operatorname{ch}_q}
\nc{\chq}{\operatorname{ch}_q}
\nc{\ddelta}{\bar{\Delta}}
\nc{\tddelta}{\ttt\ddelta}
\nc{\nnabla}{\bar{\nabla}}
\nc{\tnnabla}{\ttt\nnabla}
\nc{\lyn}{\mathcal{L}_+}
\nc{\tlyn}{\ttt\mathcal{L}_+}
\nc{\front}{\operatorname{front}}
\nc{\back}{\operatorname{back}} 
\nc{\minlet}{\operatorname{minlet}}
\nc{\tcan}[1]{\ttt\mathbf{b}_{#1}}
\nc{\tdcan}[1]{\ttt\mathbf{b}^*_{#1}}
\nc{\tpbw}[1]{\ttt \mathbf{P}_{#1}}
\nc{\tdpbw}[1]{\ttt \mathbf{P}_{#1}^*}
\nc{\proot}{\Phi^+}
\nc{\zodd}{\Z_{\text{odd}}} 
\nc{\zeven}{\Z_{\text{even}}} 
\nc{\nodd}{\N_{\text{odd}}} 
\nc{\Seg}{\operatorname{Seg}}
\nc{\Cc}{\mathcal{C}}
\nc{\Mm}{\mathcal{M}}
\nc{\aacts}{\triangleleft}
\nc{\Ccl}{\Cc^{\textrm{loc}}}
\nc{\Mml}{\Mm^{\textrm{loc}}}
\nc{\Ss}{\mathcal{S}}
\nc{\Aa}{\mathcal{A}}
\nc{\Bb}{\mathcal{B}}
\nc{\Tt}{\mathcal{T}}
\nc{\Sseg}{\underline{\Seg}}
\nc{\plyn}[1]{\nu^{\langle #1 \rangle}}
\nc{\cor}{\Bbbk}
\nc{\corr}{\mathbf{k}}
\newcommand{\C}{{\mathbb C}}
\newcommand{\Z}{{\mathbb Z}}
\newcommand{\N}{{\mathbb N}} 
\newcommand{\g}{{\mathfrak{g}}}
\nc{\Ad}{\operatorname{Ad}}
\nc{\sym}{\mathfrak{S}} 
\nc{\weyl}{\mathfrak{W}}
\nc{\ttau}{{}^\theta\tau}
\nc{\tcoset}[2]{\ttt\mathtt{D}_{#1,#2}}
\nc{\coset}[2]{\mathtt{D}_{#1,#2}}
\newcommand{\Hom}{\operatorname{HOM}}
\newcommand{\hhom}{\operatorname{Hom}}
\newcommand{\End}{\operatorname{End}}
\nc{\Aut}{\operatorname{Aut}}
\nc{\coker}{\operatorname{coker}}
\nc{\Img}{\on{Im}}
\nc{\modv}[1]{{#1}\operatorname{-mod}}
\nc{\gmodv}[1]{{#1}\operatorname{-Mod}}
\nc{\Modv}[1]{{#1}\operatorname{-Mod}}
\nc{\gModv}[1]{{#1}\operatorname{-gMod}}
\nc{\pmodv}[1]{{#1}\operatorname{-pMod}}
\nc{\fmodv}[1]{{#1}\operatorname{-fMod}}
\nc{\ffmodv}[1]{{#1}\operatorname{-fmod}}
\nc{\Res}{\on{Res}}
\nc{\res}{\on{res}}
\nc{\triv}{\mathds{1}}
\nc{\ttriv}{{}^\theta\mathds{1}}
\nc{\tind}[1]{\ttriv \varolessthan {#1}}
\nc{\Ob}{\on{Ob}} 
\nc{\klr}{\mathcal{R}( \alpha )}
\nc{\oklr}{{}^\theta \mathcal{R}( \beta; {\bm\lambda} )}
\nc{\oaklr}{{}^\theta \mathcal{R}( \beta )}
\nc{\klrv}[1]{\mathcal{R}({#1})}
\nc{\klrvv}[2]{\mathcal{R}({#1},{#2})}
\nc{\oklrv}[1]{{}^\theta \mathcal{R}({#1};{\bm\lambda})}
\nc{\oklrvv}[2]{{}^\theta \mathcal{R}({#1},{#2};{\bm\lambda})} 
\nc{\klrz}[1]{\mathcal{R}_z({#1})}
\nc{\oklrz}[1]{{}^\theta \mathcal{R}_z({#1};{\bm\lambda})}
\nc{\gklr}[1]{\mathcal{R}_{z}({#1})} 
\nc{\gmklr}[1]{\mathcal{R}_{-z}({#1})} 
\nc{\goklr}[1]{{}^\theta \mathcal{R}_{z}({#1};{\bm\lambda})}
\nc{\comp}{{}^\theta J^\beta}
\nc{\HecB}{\mathcal{H}_{\mathsf{B}_n}(p)}
\nc{\HecBf}{\mathcal{H}^{f}_{\mathsf{B}_n}(p)}
\nc{\bP}{{\mathbb{P}}}
\nc{\bPh}{\widehat{\mathbb{P}}}
\nc{\Oh}{\widehat{\mathcal{O}}} 
\nc{\bK}[1][{n}]{\widehat{\mathbb{K}}_{#1}}
\nc{\tOh}{{}^\theta\widehat{\mathcal{O}}}
\nc{\tOhb}{{}^\theta\widehat{\mathcal{O}}_{\beta}}
\nc{\tbP}{{}^\theta{\mathbb{P}}}
\nc{\tbPh}{{}^\theta\widehat{\mathbb{P}}}
\nc{\tbK}[1][{n}]{{}^\theta\widehat{\mathbb{K}}_{#1}}
\nc{\Kcompl}{{}^\theta\widehat{\mathcal{K}}_\beta} 
\nc{\hV}{\widehat{V}}
\nc{\hVK}{\widetilde{V}}
\nc{\bV}[1][{n}]{\widehat{V}^{\otimes{#1}}}
\nc{\bVK}[1][{n}]{\widetilde{V}^{\otimes{#1}}}
\nc{\tbV}[1][{n}]{{}^\theta\widehat{V}^{\otimes{#1}}}
\nc{\tbVK}[1][{n}]{{}^\theta\widetilde{V}^{\otimes{#1}}}
\nc{\thV}{{}^\theta\widehat{V}}
\nc{\skewr}{{}^\theta\widehat{\mathcal{K}}_\beta \rtimes \cor [ \weyl_n ] }
\nc{\skewrr}{\tbK[\beta] \rtimes \cor [ \weyl_n ] }
\nc{\skewrrr}{\cor [ \weyl_n ] \ltimes \tbK[\beta] }
\nc{\Ulg}{U_q(L\mathfrak{g})}
\nc{\Ugl}{U_q(L\mathfrak{gl}_n)}
\nc{\Usl}{U_q(L\mathfrak{sl}_n)}
\nc{\aff}{{\rm{aff}}}
\nc{\Kmv}[1]{\mathbf{K}_{{#1}}}
\nc{\Rmv}[1]{\mathbf{R}_{{#1}}}
\nc{\Km}{\mathbf{K}}
\nc{\Rm}{\mathbf{R}}
\nc{\tfun}{{}^\theta\mathsf{F}} 
\nc{\fun}{\mathsf{F}} 
\nc{\ttfun}{{}^\theta\tilde{\mathsf{F}}} 
\nc{\bl}{\bigl(}
\nc{\br}{\bigr)}
\nc{\lan}{\langle}
\nc{\ran}{\rangle}
\newcommand{\seteq}{\mathbin{:=}}
\nc{\supp}{\on{supp}}
\newcommand{\soplus}{\mathop{\mbox{\normalsize$\bigoplus$}}\limits}
\newcommand{\id}{\on{id}}
\nc{\ord}{\operatorname{ord}}
\nc{\Id}{\operatorname{Id}}
\nc{\gr}{\operatorname{gr}}
\def\vecsign{\mathchar"017E}
\def\dvecsign{\smash{\stackon[-2.335pt]{\vecsign}{\rotatebox{180}{$\vecsign$}}}}
\def\dvec#1{\def\useanchorwidth{T}\stackon[-4.5pt]{#1}{\,\dvecsign}}
\nc{\epito}{\twoheadrightarrow}
\newcommand{\isoto}[1][]{\mathop{\xrightarrow%
		[{\raisebox{.3ex}[0ex][.3ex]{$\scriptstyle{#1}$}}]%
		{{\raisebox{-.6ex}[0ex][-.6ex]{$\mspace{2mu}\sim\mspace{2mu}$}}}}}
\newlength{\mylength}
\nc{\cA}{{\mathcal A}}
\nc{\cB}{{\mathcal B}}
\nc{\cC}{{\mathcal C}}
\nc{\cD}{{\mathcal D}}
\nc{\cE}{{\mathcal E}}
\nc{\cF}{{\mathcal F}}
\nc{\cG}{{\mathcal G}}
\nc{\cH}{{\mathcal H}}
\nc{\cI}{{\mathcal I}}
\nc{\cJ}{{\mathcal J}}
\nc{\cK}{{\mathcal K}}
\nc{\cL}{{\mathcal L}}
\nc{\cM}{\mathcal{M}}
\nc{\cN}{{\mathcal N}}
\nc{\cO}{{\mathcal O}}
\nc{\cP}{{\mathcal P}}
\nc{\calQ}{{\mathcal Q}}
\nc{\cR}{{\mathcal R}}
\nc{\cS}{\mathcal{S}}
\nc{\cT}{{\mathcal T}}
\nc{\cU}{\mathcal{U}}
\nc{\cV}{{\mathcal V}}
\nc{\cX}{{\mathcal X}}
\nc{\cY}{\mathcal{Y}}
\nc{\cW}{\mathcal{W}}
\nc{\cZ}{{\mathcal Z}}
\nc{\bbA}{{\mathbb{A}}}
\nc{\bbB}{{\mathbb{B}}}
\nc{\bbC}{{\mathbb{C}}}
\nc{\bbD}{{\mathbb{D}}}
\nc{\bbE}{{\mathbb{E}}}
\nc{\bbF}{{\mathbb{F}}}
\nc{\bbG}{{\mathbb{G}}}
\nc{\bbH}{{\mathbb{H}}}
\nc{\bbI}{{\mathbb{I}}}
\nc{\bbJ}{{\mathbb{J}}}
\nc{\bbK}{{\mathbb{K}}}
\nc{\bbL}{{\mathbb{L}}}
\nc{\bbM}{{\mathbb{M}}}
\nc{\bbN}{{\mathbb{N}}}
\nc{\bbO}{{\mathbb{O}}}
\nc{\bbP}{{\mathbb{P}}}
\nc{\bbQ}{{\mathbb{Q}}}
\nc{\bbR}{{\mathbb{R}}}
\nc{\bbS}{{\mathbb{S}}}
\nc{\bbT}{{\mathbb{T}}}
\nc{\bbU}{{\mathbb{U}}}
\nc{\bbV}{{\mathbb{V}}}
\nc{\bbX}{{\mathbb{X}}}
\nc{\bbY}{{\mathbb{Y}}}
\nc{\bbW}{{\mathbb{W}}}
\nc{\bbZ}{{\mathbb{Z}}}
\nc{\scrA}{{\mathscr A}}
\nc{\scrB}{{\mathscr B}}
\nc{\scrC}{{\mathscr C}}
\nc{\scrD}{{\mathscr D}}
\nc{\scrE}{{\mathscr E}}
\nc{\scrF}{{\mathscr F}}
\nc{\scrG}{{\mathscr G}}
\nc{\scrH}{{\mathscr H}}
\nc{\scrI}{{\mathscr I}}
\nc{\scrJ}{{\mathscr J}}
\nc{\scrK}{{\mathscr K}}
\nc{\scrL}{{\mathscr L}}
\nc{\scrM}{{\mathscr M}}
\nc{\scrN}{{\mathscr N}}
\nc{\scrO}{{\mathscr O}}
\nc{\scrP}{{\mathscr P}}
\nc{\scrQ}{{\mathscr Q}}
\nc{\scrR}{{\mathscr R}}
\nc{\scrS}{{\mathscr S}}
\nc{\scrT}{{\mathscr T}}
\nc{\scrU}{{\mathscr U}}
\nc{\scrV}{{\mathscr V}}
\nc{\scrX}{{\mathscr X}}
\nc{\scrY}{{\mathscr Y}}
\nc{\scrW}{{\mathscr W}}
\nc{\scrZ}{{\mathscr Z}}
\nc{\sfA}{{\mathsf A}}
\nc{\sfB}{{\mathsf B}}
\nc{\sfC}{{\mathsf C}}
\nc{\sfD}{{\mathsf D}}
\nc{\sfE}{{\mathsf E}}
\nc{\sfF}{{\mathsf F}}
\nc{\sfG}{{\mathsf G}}
\nc{\sfH}{{\mathsf H}}
\nc{\sfI}{{\mathsf I}}
\nc{\sfJ}{{\mathsf J}}
\nc{\sfK}{{\mathsf K}}
\nc{\sfL}{{\mathsf L}}
\nc{\sfM}{{\mathsf M}}
\nc{\sfN}{{\mathsf N}}
\nc{\sfO}{{\mathsf O}}
\nc{\sfP}{{\mathsf P}}
\nc{\sfQ}{{\mathsf Q}}
\nc{\sfR}{{\mathsf R}}
\nc{\sfS}{{\mathsf S}}
\nc{\sfT}{{\mathsf T}}
\nc{\sfU}{{\mathsf U}}
\nc{\sfV}{{\mathsf V}}
\nc{\sfX}{{\mathsf X}}
\nc{\sfY}{{\mathsf Y}}
\nc{\sfW}{{\mathsf W}}
\nc{\sfZ}{{\mathsf Z}}
\nc{\sfa}{{\mathsf a}}
\nc{\sfb}{{\mathsf b}}
\nc{\sfc}{{\mathsf c}}
\nc{\sfd}{{\mathsf d}}
\nc{\sfe}{{\mathsf e}}
\nc{\sff}{{\mathsf f}}
\nc{\sfg}{{\mathsf g}}
\nc{\sfh}{{\mathsf h}}
\nc{\sfi}{{\mathsf i}}
\nc{\sfj}{{\mathsf j}}
\nc{\sfk}{{\mathsf k}}
\nc{\sfl}{{\mathsf l}}
\nc{\sfm}{{\mathsf m}}
\nc{\sfn}{{\mathsf n}}
\nc{\sfo}{{\mathsf o}}
\nc{\sfp}{{\mathsf p}}
\nc{\sfq}{{\mathsf q}}
\nc{\sfr}{{\mathsf r}}
\nc{\sfs}{{\mathsf s}}
\nc{\sft}{{\mathsf t}}
\nc{\sfu}{{\mathsf u}}
\nc{\sfv}{{\mathsf v}}
\nc{\sfx}{{\mathsf x}}
\nc{\sfy}{{\mathsf y}}
\nc{\sfw}{{\mathsf w}}
\nc{\sfz}{{\mathsf z}}
\nc {\bfA}{{\mathbf A}}
\nc {\bfB}{{\mathbf B}}
\nc {\bfC}{{\mathbf C}}
\nc {\bfD}{{\mathbf D}}
\nc {\bfE}{{\mathbf E}}
\nc {\bfF}{{\mathbf F}}
\nc {\bfG}{{\mathbf G}}
\nc {\bfH}{{\mathbf H}}
\nc {\bfI}{{\mathbf I}}
\nc {\bfJ}{{\mathbf J}}
\nc {\bfK}{{\mathbf K}}
\nc {\bfL}{{\mathbf L}}
\nc {\bfM}{{\mathbf M}}
\nc {\bfN}{{\mathbf N}}
\nc{\bfO}{{\mathbf O}}
\nc {\bfP}{{\mathbf P}}
\nc {\bfQ}{{\mathbf Q}}
\nc {\bfR}{{\mathbf R}}
\nc {\bfS}{{\mathbf S}}
\nc {\bfT}{{\mathbf T}}
\nc {\bfU}{{\mathbf U}}
\nc {\bfV}{{\mathbf V}}
\nc {\bfX}{{\mathbf X}}
\nc {\bfY}{{\mathbf Y}}
\nc {\bfW}{{\mathbf W}}
\nc {\bfZ}{{\mathbf Z}}
\nc {\fka}{{\mathfrak a}}
\nc {\fkb}{{\mathfrak b}}
\nc {\fkc}{{\mathfrak c}}
\nc {\fkd}{{\mathfrak d}}
\nc {\fke}{{\mathfrak e}}
\nc {\fkf}{{\mathfrak f}}
\nc {\fkg}{{\mathfrak g}}
\nc {\fkh}{{\mathfrak h}}
\nc {\fki}{{\mathfrak i}}
\nc {\fkj}{{\mathfrak j}}
\nc {\fkk}{{\mathfrak k}}
\nc {\fkl}{{\mathfrak l}}
\nc {\fkm}{{\mathfrak m}}
\nc {\fkn}{{\mathfrak n}}
\nc {\fko}{{\mathfrak o}}
\nc {\fkp}{{\mathfrak p}}
\nc {\fkq}{{\mathfrak q}}
\nc {\fkr}{{\mathfrak r}}
\nc {\fks}{{\mathfrak s}}
\nc {\fkt}{{\mathfrak t}}
\nc {\fku}{{\mathfrak u}}
\nc {\fkv}{{\mathfrak v}}
\nc {\fkx}{{\mathfrak x}}
\nc {\fky}{{\mathfrak y}}
\nc {\fkw}{{\mathfrak w}}
\nc {\fkz}{{\mathfrak z}}
\nc {\fkA}{{\mathfrak A}}
\nc {\fkB}{{\mathfrak B}}
\nc {\fkC}{{\mathfrak C}}
\nc {\fkD}{{\mathfrak D}}
\nc {\fkE}{{\mathfrak E}}
\nc {\fkF}{{\mathfrak F}}
\nc {\fkG}{{\mathfrak G}}
\nc {\fkH}{{\mathfrak H}}
\nc {\fkI}{{\mathfrak I}}
\nc {\fkJ}{{\mathfrak J}}
\nc {\fkK}{{\mathfrak K}}
\nc {\fkL}{{\mathfrak L}}
\nc {\fkM}{{\mathfrak M}}
\nc {\fkN}{{\mathfrak N}}
\nc {\fkO}{{\mathfrak O}}
\nc {\fkP}{{\mathfrak P}}
\nc {\fkQ}{{\mathfrak Q}}
\nc {\fkR}{{\mathfrak R}}
\nc {\fkS}{{\mathfrak S}}
\nc {\fkT}{{\mathfrak T}}
\nc {\fkU}{{\mathfrak U}}
\nc {\fkV}{{\mathfrak V}}
\nc {\fkX}{{\mathfrak X}}
\nc {\fkY}{{\mathfrak Y}}
\nc {\fkW}{{\mathfrak W}}
\nc {\fkZ}{{\mathfrak Z}}
\rnc{\a}{\fka}
\rnc{\b}{\fkb}
\rnc{\c}{\fkc}
\rnc{\d}{\fkd}
\nc{\e}{\fke}
\nc{\f}{\fkf}
\nc{\h}{\fkh}
\rnc{\i}{\fki}
\rnc{\j}{\fkj}
\rnc{\k}{\fkk}
\rnc{\l}{\fkl}
\nc{\m}{\fkm}
\nc{\n}{\fkn}
\rnc{\o}{\fko}
\nc{\p}{\fkp}
\nc{\q}{\fkq}
\rnc{\r}{\fkr}
\nc{\s}{\fks}
\rnc{\t}{\fkt}
\rnc{\u}{\fku}
\rnc{\v}{\fkv}
\nc{\x}{\fkx}
\nc{\y}{\fky}
\nc{\w}{\fkw}
\nc{\z}{\fkz}
\nc{\rk}{\mathsf{rk}}
\nc {\ul}{\underline}
\nc {\ol}{\overline}
\nc {\wtil}{\widetilde}
\nc {\wh}{\widehat}
\nc{\scs}{\scriptscriptstyle}
\nc{\scsop}{\scriptscriptstyle\operatorname} 
\nc {\ie}{{\emph{i.e.}}, }
\nc {\eg}{{\emph{e.g.}}, }
\nc {\aand}{\qquad\mbox{and}\qquad}
\nc{\drc}[1]{\delta_{#1}}
\nc{\der}{\partial}
\nc{\sfad}{\mathsf{ad}}
\nc{\ten}{\otimes}
\nc{\andreacomment}[1]{\footnote{\color{red}{#1}}}
\nc{\Omit}[1]{}
\nc{\summary}[1]{}
\nc{\voldemort}{Voldemort } 
\nc{\Ons}{\mathbf{O}}
\nc{\aOns}{\mathbf{A}}
\nc{\vOns}{\mathbf{V}}
\nc{\km}{\mathbf{k}}
\nc{\qkm}{\mathfrak{k}}
\nc{\texp}[1]{\wt{s}_{#1}} 
\nc{\Lus}[1]{T_{#1}} 
\nc{\mLus}[1]{\mathbf{T}_{#1}} 
\nc{\fin}{{0}}
\nc{\Ifin}{{I_\fin}}
\nc{\Afin}{{A_\fin}}
\nc{\gfin}{{\g_\fin}}
\rnc{\aff}[1]{\wh{#1}}
\nc{\de}[1]{\epsilon_{#1}} 
\nc{\fIS}{I} 
\nc{\aIS}{\aff{I}} 
\nc{\rt}[1]{\alpha_{#1}} 
\nc{\cort}[1]{\alpha_{#1}^\vee}
\nc{\fwt}[1]{\Lambda_{#1}}
\nc{\fcwt}[1]{\Lambda_{#1}^\vee}
\nc{\cp}[2]{{#1} (#2)} 
\nc{\iip}[2]{\left( #1 , #2\right)} 
\nc{\drv}[1]{\delta_{#1}} 
\nc{\codrv}[1]{d_{#1}} 
\nc{\bsfld}{\Bbbk} 
\nc{\bsF}{\bsfld}
\nc{\central}[1]{c_{#1}}
\nc{\Qlat}{\mathsf{Q}}
\nc{\Qpm}{\Qlat_\pm}
\nc{\Qp}{\Qlat_+}
\nc{\Qm}{\Qlat_-}
\nc{\Qlatv}{\Qlat^\vee}
\nc{\Qvpm}{\Qlat^\vee_{\pm}}
\nc{\Qvp}{\Qlat^\vee_+}
\nc{\Qvm}{\Qlat^\vee_-}
\nc{\aQ}{\aff{\Qlat}}
\nc{\aQv}{\aff{\Qlat}^{\vee}}
\nc{\aQp}{\aff{\Qlat}_+}
\nc{\aQvp}{\aff{\Qlat}^{\vee}_+}
\nc{\aQvext}{\aff{\Qlat}^{\vee}_{{\scsop{ext}}}}
\nc{\aQvextp}{\aff{\Qlat}^{\vee}_{{\scsop{ext}},+}}
\nc{\Plat}{\mathsf{P}}
\nc{\Platv}{\mathsf{P}^\vee}
\nc{\Ppm}{\mathsf{P}_\pm}
\nc{\Pp}{\mathsf{P}_+}
\nc{\Pm}{\mathsf{P}_-}
\nc{\Pvpm}{\mathsf{P}^\vee_\pm}
\nc{\Pvp}{\mathsf{P}^\vee_+}
\nc{\Pvm}{\mathsf{P}^\vee_-}
\nc{\aP}{\wh{\Plat}}
\nc{\aPext}{\wh{\Plat}_{{\scsop{ext}}}}
\nc{\aPpm}{\aP_{\pm}}
\nc{\aPextpm}{\aP_{{\scsop{ext}},\pm}}
\nc{\aPd}{\wh{\Plat}_{\drv{}}}
\nc{\PZ}{{\Plat_{\Z}}}
\nc{\PvZ}{{\Platv_{\Z}}}
\nc{\cl}{\mathsf{cl}}
\nc{\Pcl}{\Plat_{\cl}}
\nc{\Pvcl}{\Plat^{\vee}_{\cl}}
\nc{\Pclz}{\Plat_{\cl,0}}
\nc{\fr}{_\mathsf{fr}}
\nc{\Qfr}{\Qlat\fr}
\nc{\Pfr}{\Plat\fr}
\nc{\Qvfr}{\Qlatv\fr}
\nc{\Pvfr}{\Platv\fr}
\nc{\Wfin}{{W_{\fin}}}
\nc{\Waff}{W}
\nc{\Wext}{\widetilde{W}}
\nc{\rfl}[1]{{s_{#1}}}
\nc{\Uqg}{U_q\g}
\nc{\Uqgp}{U^\prime_q\g}
\nc{\Uqb}{U_q{\b}}
\nc{\Uqbp}{U_q{\b}^+}
\nc{\Uqbm}{U_q{\b}^-}
\nc{\Uqbpm}{U_q{\b}^{\pm}}
\nc{\Uqag}{U_q\wh{\g}}
\nc{\Uqagp}{U_q\wh{\g}'}
\nc{\CUqag}[1]{(\Uqag^{#1})^{\cO,\scsop{int}}}
\nc{\Uqan}{U_q\wh{\n}}
\nc{\Uqanp}{U_q\wh{\n}^+}
\nc{\Uqanm}{U_q\wh{\n}^-}
\nc{\Uqanpm}{U_q\wh{\n}^{\pm}}
\nc{\Uqah}{U_q\wh{\h}}
\nc{\Uqab}{U_q\wh{\b}}
\nc{\Uqabp}{U_q\wh{\b}^+}
\nc{\Uqabm}{U_q\wh{\b}^-}
\nc{\Uqabpm}{U_q\wh{\b}^{\pm}}
\nc{\Lg}{L\g} 
\nc{\ag}{\wh{\g}} 
\nc{\agp}{\wh{\g}'} 
\nc{\UqLg}{U_q\Lg}
\nc{\UqLsl}[1]{U_qL\mathfrak{sl}_{#1}}
\nc{\CUqLg}[1]{(\UqLg^{#1})^{\scsop{fd}}} 
\nc{\QL}[1]{U_qL{#1}}
\nc{\Kg}[1]{K_{#1}}
\nc{\Kgpm}[1]{K_{#1}^{\pm}}
\nc{\Kgp}[1]{K_{#1}^+}
\nc{\Kgm}[1]{K_{#1}^-}
\nc{\Eg}[1]{E_{#1}}
\nc{\Fg}[1]{F_{#1}}
\nc{\egp}[1]{e^{+}_{#1}}
\nc{\egm}[1]{e^{-}_{#1}}
\nc{\egpm}[1]{e^{\pm}_{#1}}
\nc{\egmp}[1]{e^{\mp}_{#1}}
\nc{\Ce}{\cC}
\nc{\kg}[1]{k_{#1}}
\nc{\kgpm}[1]{k_{#1}^{\pm}}
\nc{\kgp}[1]{k_{#1}^+}
\nc{\kgm}[1]{k_{#1}^-}
\nc{\xpm}[1]{x^{\pm}_{#1}}
\nc{\xmp}[1]{x^{\mp}_{#1}}
\nc{\xp}[1]{x^{+}_{#1}}
\nc{\xm}[1]{x^{-}_{#1}}
\nc{\xz}[1]{\xi_{#1}}
\nc{\phipm}[1]{\phi^{\pm}_{#1}}
\nc{\phip}[1]{\phi^{+}_{#1}}
\nc{\phim}[1]{\phi^{-}_{#1}}
\nc{\axzp}[1]{\Psi^{+}_{#1}}
\nc{\axzm}[1]{\Psi^{-}_{#1}}
\nc{\axzpm}[1]{\Psi^{\pm}_{#1}}
\nc{\axze}[1]{\Psi^{\varepsilon}_{#1}}
\nc{\QLp}[1]{U_qL{#1}^+}
\nc{\QLm}[1]{U_qL{#1}^-}
\nc{\QLpm}[1]{U_qL{#1}^\pm}
\nc{\QLz}[1]{U_q^cL{#1}}
\nc{\chev}{\omega}
\nc{\weightspace}[2]{{{#1}_{#2}}}
\nc{\wsp}[2]{\weightspace{#1}{#2}}
\nc{\wts}[1]{\mathsf{wt}(#1)}
\nc{\hwL}[1]{L(#1)}
\nc{\catO}[3]{\O_{#1}^{#2}{#3}}
\nc{\wgt}[1]{\operatorname{wt}(#1)}
\nc{\frep}[1]{\mathsf{V}_{#1}} 
\nc{\qstr}[2]{\Sigma_{#1,#2}}
\nc{\evrep}[2]{V_{#1}(#2)}
\nc{\qstrep}[2]{V(\qstr{#1}{#2})}
\nc{\qsrep}[1]{V(#1)}
\nc{\shrep}[2]{{#1}(#2)}
\nc{\Lshrep}[2]{\Lfml{#1}{#2}}
\nc{\Pshrep}[2]{\Pfml{#1}{#2}}
\nc{\HL}[1]{\mathcal{C}_{#1}}
\nc{\Oint}{\cO^{\scsop{int}}}
\nc{\Ointp}{\cO^{+,\scsop{int}}}
\nc{\Ointm}{\cO^{-,\scsop{int}}}
\nc{\Ointpm}{\cO^{\pm,\scsop{int}}}
\nc{\Rep}{\operatorname{Rep}}
\nc{\Repfd}{\Rep^{{\scsop{fd}}}}
\nc{\Mod}[1]{{#1}\operatorname{-mod}}
\nc{\Modfd}[1]{{#1}\operatorname{-mod}^{{\scsop{fd}}}}
\nc{\Modfdgr}[1]{{#1}\operatorname{-mod}_{\scsop{gr}}^{\scsop{fd}}}
\nc{\Modgr}[1]{{#1}\operatorname{-mod}_{\scsop{gr}}}
\nc{\shift}[1]{\Sigma_{#1}}
\nc{\shifta}[1]{\chi_{#1}}
\nc{\tshift}[1]{\Sigma^{\tau}_{#1}}
\nc{\shiftm}[2]{{#1}_{#2}}
\nc{\Deltaop}{\Delta^{\scsop{op}}}
\nc{\fml}[2]{{#1}[\negthinspace[#2]\negthinspace]} 
\nc{\Lfml}[2]{{#1}(\negthinspace(#2)\negthinspace)} 
\nc{\Pfml}[2]{{#1}\{\negthinspace\{#2\}\negthinspace\}} 
\nc{\qsl}[1]{U_q{\mathfrak{sl}}_{#1}}
\nc{\qasl}[1]{U_q\wh{\mathfrak{sl}}_{#1}}
\nc{\qlsl}[1]{U_qL\mathfrak{sl}_{#1}}
\nc{\UqL}[1]{U_qL{#1}}
\nc{\brS}[1]{S_{#1}} 
\nc{\brSg}[1]{\widetilde{s}_{#1}} 
\nc{\Br}[1]{\mathscr{B}_{#1}} 
\nc{\qWS}[1]{S_{#1}}
\nc{\LT}[1]{T_{#1}}
\nc{\tcorr}[1]{\xi_{#1}}
\nc{\bt}[1]{\mathcal{S}_{#1}}
\nc{\adt}[1]{\mathcal{T}_{#1}}
\nc{\adbt}[1]{\mathcal{T}_{#1}}
\nc{\Rcorr}[1]{\eta_{#1}}
\nc{\intg}{\mathbf{W}^{\mathsf{int}}}
\nc{\Vect}{\mathsf{Vect}}
\nc{\rank}{\operatorname{rank}}
\nc{\corank}{\operatorname{corank}}
\nc{\gsat}[1]{\mathsf{Sat}(#1)} 
\nc{\auxgsat}[2]{\mathsf{Sat}(#1; #2)} 
\nc{\sat}[1]{{\mathbf{S}}} 
\nc{\tsat}{\vartheta} 
\nc{\zsat}{\zeta} 
\nc{\tsatq}{\tsat_{q}} 
\nc{\zsatq}{\zsat_{q}} 
\nc{\oi}{\mathsf{op}} 
\nc{\Parsetc}{\bm{\Gamma}} 
\nc{\Parsets}{\bm{\Sigma}}
\nc{\Parc}{\bm{\gamma}} 
\nc{\Pars}{\bm{\sigma}}
\nc{\parc}[1]{\bm{\gamma}_{#1}} 
\nc{\pars}[1]{\bm{\sigma}_{#1}}
\nc{\ctheta}{\theta} 
\nc{\qtheta}{\theta_q} 
\nc{\qthetat}{\widetilde{\theta}_q} 
\nc{\Uqk}{U_q\mathfrak{k}}
\nc{\Uqh}{U_q\mathfrak{h}}
\nc{\Uqn}{U_q\mathfrak{n}}
\nc{\Uqnp}{U_q\mathfrak{n}^+}
\nc{\Uqnpm}{U_q\mathfrak{n}^{\pm}}
\nc{\wt}{\widetilde}
\nc{\Ieq}{I_{\rm eq}} 
\nc{\Ins}{I_{\rm ns}} 
\nc{\Idiff}{\bfI_{\rm diff}} 
\nc{\aIeq}{\aIS_{\rm eq}} 
\nc{\aIdiff}{\aIS_{\rm diff}} 
\nc{\aIns}{\aIS_{\rm ns}} 
\nc{\bg}[1]{b_{#1}} 
\nc{\Bg}[1]{B_{#1}} 
\nc{\QK}[1]{\Upsilon_{#1}} 
\nc{\QR}[1]{{\Xi}_{#1}} 
\nc{\RM}[1]{\mathsf{R}_{#1}} 
\nc{\sRM}[2]{{\mathsf{R}}_{#1}(#2)}
\nc{\sRMv}[2]{{\mathsf{R}}^{\vee}_{#1}(#2)}
\nc{\tsRM}[2]{{\mathsf{R}}^{\tau}_{#1}(#2)} 
\nc{\tsRMv}[2]{{\mathsf{R}}^{\tau,\vee}_{#1}(#2)} 
\nc{\rRM}[2]{{\mathbf{R}}_{#1}(#2)} 
\nc{\rRMv}[2]{{\mathbf{R}}^{\vee}_{#1}(#2)} 
\nc{\KM}[1]{\mathsf{K}_{#1}} 
\nc{\sKM}[2]{\mathsf{K}_{#1}(#2)} 
\nc{\rKM}[2]{\mathbf{K}_{#1}(#2)} 
\nc{\auxsat}[1]{\bfT} 
\nc{\hext}[2]{{#1}[\negthinspace[#2]\negthinspace]} 
\nc{\binomb}[2]{\genfrac{[}{]}{0pt}{}{#1}{#2}}
\nc{\rootsys}{\Phi}
\nc{\hrt}{\vartheta} 
\nc{\Gg}{\mathcal{G}}
\nc{\qV}[1]{V(#1)} 
\nc{\qX}[1]{X(#1)} 
\nc{\qJ}{J} 
\nc{\qT}{\theta} 
\nc{\qJp}{{\qJ_+}} 
\nc{\qJm}{{\qJ_-}} 
\nc{\qJT}{{\qJ^{\theta}}} 
\nc{\dR}[1]{d_{#1}}
\nc{\dK}[1]{d_{#1}}
\nc{\qG}{\Gamma} 
\nc{\frv}[1]{{\bm\lambda}{#1}} 
\nc{\zqV}[2]{V(#1)_{#2}} 
\nc{\trKM}[2]{\wt{\mathbf{K}}_{#1}(#2)} 
	\def\MR#1{}
\DeclareRobustCommand{\SkipTocEntry}[5]{}
\title[Generalized Schur-Weyl dualities]
{Generalized Schur-Weyl dualities for quantum affine 
	symmetric pairs and orientifold KLR algebras}
\author[A. Appel]{Andrea Appel} 
\address{Dipartimento di Scienze Matematiche, 
	Fisiche e Informatiche, Universit\`a di Parma, 
	and INFN Gruppo Collegato di Parma, 43100 Parma PR, Italy}
\email{\href{mailto:andrea.appel@unipr.it}{andrea.appel@unipr.it}}
\author[T. Prze\'{z}dziecki]{Tomasz Prze\'{z}dziecki}
\address{School of Mathematics, University of Edinburgh, Peter Guthrie Tait Rd, Edinburgh, EH9 3FD, United Kingdom}
\email{\href{mailto:tprzezdz@exseed.ed.ac.uk}{tprzezdz@exseed.ed.ac.uk}}
\keywords{Quantum affine algebras; Khovanov-Lauda-Rouquier algebras; Schur-Weyl duality; Quantum affine symmetric pairs}
\subjclass[2020]
{81R50, 17B37, 16T25}
\thanks{The first author is partially supported by the Programme {FIL 2020} 
	of the University of Parma and co-sponsored by Fondazione Cariparma. 
	The second author is supported by the ERC Starting Grant No. 759967
	{\em Categorified Donaldson-Thomas Theory}, and the EPSRC grant No.\ EP/W022834/1 \emph{Kac-Moody quantum symmetric pairs, KLR algebras and generalized Schur-Weyl duality}.}
\begin{document}
	
	\begin{abstract}
		Let $\g$ be a complex simple Lie algebra and $\UqLg$ the corresponding 
		quantum affine algebra. We construct a functor $\tfun$ between finite-dimensional modules 
		over a quantum symmetric pair subalgebra of affine type $\Uqk\subset\UqLg$ and an {orientifold} 
		KLR algebra arising from a framed quiver with a contravariant involution, 
		providing a {boundary analogue} of the Kang-Kashiwara-Kim-Oh 
		generalized Schur-Weyl duality. With respect to their construction, our combinatorial model is further enriched 
		with the poles of a 
		trigonometric K-matrix intertwining the action of $\Uqk$ on finite-dimensional
		$\UqLg$-modules. By construction, $\tfun$ is naturally compatible with the
		Kang-Kashiwara-Kim-Oh functor in that, while the latter is a functor of monoidal categories, 
		$\tfun$ is a functor of {module} categories. Relying on a suitable isomorphism {\em à la} 
		Brundan-Kleshchev-Rouquier,
		we prove  that $\tfun$ recovers the Schur-Weyl dualities due to
		Fan-Lai-Li-Luo-Wang-Watanabe in quasi-split type $\sf AIII$. 
	\end{abstract}

	\maketitle
	
	\setcounter{tocdepth}{1}
	\tableofcontents
	
	
	\section{Introduction}

	\subsection{}
	In the present paper, we introduce a \emph{boundary analogue} of Kang-Kashiwara-Kim-Oh 
	generalized Schur-Weyl dualities between quantum affine algebras and Khovanov-Lauda-Rouquier 
	(KLR) algebras (also known as quiver Hecke algebras). 
	More precisely, let $\g$ be a complex simple Lie algebra and $\UqLg$ the corresponding 
	quantum affine algebra. Given an affine quantum symmetric pair (QSP) subalgebra $\Uqk\subset\UqLg$, we construct a functor
	\begin{equation}\label{eq:tfun}
		\tfun:\Modfdgr{\oklrn{Q}}\to\Modfd{\Uqk}
	\end{equation}
	where $\oklrn{Q}$ denotes the {\em orientifold} KLR algebra ($o$KLR) associated to a 
	distinguished quiver $Q$ endowed with a contravariant involution $\theta$ and a framing $\bm\lambda$, which depends upon the choice of a suitable family of finite-dimensional
	$\UqLg$-modules.
	By construction, the functor $\tfun$ intertwines the standard module category structures on 
	$\Modfdgr{\oklrn{Q}}$ and $\Modfd{\Uqk}$, and it is expected to yield, under a suitable 
	localization, an equivalence of categories with a boundary analogue of the Hernandez-Leclerc
	category for $\UqLg$. Finally, we prove that 
	$\tfun$ recovers the Schur-Weyl dualities constructed by Fan-Lai-Li-Luo-Wang-Watanabe   between the (quasi-split type $\sf AIII$) QSP algebra $\Uqk$ and the affine Hecke algebra
	of type $\sfC$.\\
	
	While there is a natural parallelism between the results described above and those 
	in \cite{kang-kashiwara-kim-18}, their proof requires several new ideas, which we describe 
	in detail in the rest of this introduction.
	
	\subsection{}
	The classical Schur-Weyl duality is a fundamental symmetry, which allows us to identify the category of finite-dimensional representations of the symmetric group $\mathfrak{S}_{\ell}$ with the subcategory of $\mathfrak{sl}_N$-modules appearing in the decomposition of the ${\ell}$-tensor power of the vector representation $\mathbb{V}\coloneqq\mathbb{C}^N$ of $\mathfrak{sl}_N$ (when $l \leq N$). This symmetry can be realized as a functor
$\mathbb{V}^{\otimes \ell}\otimes_{\mathfrak{S}_{\ell}}\bullet:\Modfd{\mathfrak{S}_{\ell}}\to
	\Modfd{U\mathfrak{sl}_N}$, using the fact that $\mathbb{V}^{\otimes \ell}$ is an $(U\mathfrak{sl}_N,\mathfrak{S}_{\ell})$-bimodule. The quantum analogue of this construction appears as a duality between quantum groups and Hecke algebras. In the affine setting, it is due to Chari-Pressley \cite{Chari-Pressley-96} and amounts to a functor $\Modfd{\hat{H}_{\ell,q^2}}\to\Modfd{U_qL\mathfrak{sl}_N}$ between the affine Hecke algebra $\hat{H}_{\ell,q^2}$ and the 
	quantum loop algebra $U_qL\mathfrak{sl}_N$, arising from their joint action on the $\ell$-tensor product of 
	the affinized vector representation $\mathbb{V}\coloneqq\mathbb{C}(q)^N[z,z^{-1}]$.
	
	\subsection{}
	More recently, in the series of papers \cite{kang-kashiwara-kim-18,kang-kashiwara-kim-15,
		kang-kashiwara-kim-oh-15, kang-kashiwara-kim-oh-16,kang-kashiwara-kim-oh-18}, Kang, Kashiwara, Kim  and Oh
	defined a \emph{generalized} version of Chari and Pressley's Schur-Weyl duality, which goes beyond 
	type $\mathsf{A}$ and is expressed in terms of KLR algebras.
	More precisely, given a complex simple 
	Lie algebra $\g$, the construction depends on a combinatorial datum consisting of a (possibly infinite) set of finite-dimensional $\UqLg$-representations $\qV{i}$, each decorated with a non-zero scalar $\qX{i}\in\mathbb{C}(q)$. By comparing the order of the poles of the trigonometric R-matrices on $\qV{i}\otimes\qV{j}$ at $\qX{j}/\qX{i}$, one obtains a quiver $Q$ for the KLR algebra, which, 
given a dimension vector $\beta$, we denote by $\cR_Q(\beta)$. 

A suitable $(\UqLg, \cR_Q(\beta))$ bimodule $\mathbb{V}^{\beta}$ is constructed as a direct sum of various tensor products of affinized representations $V(i)$. The action of $\cR_Q(\beta)$ on
	$\mathbb{V}^{\beta}$ is given in terms of normalized R-matrices of $\UqLg$. 
The bimodule yields a functor $\Modgr{\cR_Q(\beta)}^{\scsop{fd}}\to \Modfd{\UqLg_Q}$, and taking the sum over all possible dimension vectors, one obtains a monoidal functor
	\[
	\fun \colon \bigoplus_{\beta}\Modgr{\cR_Q(\beta)}^{\scsop{fd}}\to \Modfd{\UqLg}. 
	\]

We emphasize that, \emph{a priori}, $Q$ is not related to the Dynkin diagram of $\g$. However, it was shown in 
\cite{kang-kashiwara-kim-15} that there does exist a combinatorial datum such that the two coincide. 
In that case, $\fun$ induces an equivalence with the Hernandez-Leclerc category $\mathcal{C}_Q\subset\operatorname{Rep_{fd}}(\UqLg_Q)$ \cite{fujita-22, naoi}. 
	
	\subsection{}
	A {\em quantum affine symmetric pair (QSP) subalgebra} is a distinguished coideal subalgebra 
	$\Uqk\subset \UqLg$ (also known in the literature as Letzter-Kolb coideal subalgebras 
	or affine $\iota$quantum groups) \cite{kolb-14}. Building on previous work by Bao-Wang \cite{bao-wang-18b} and Balagovic-Kolb
	\cite{balagovic-kolb-19}, the first author and B. Vlaar proved in \cite{appel-vlaar-20} that QSP subalgebras of arbitrary Kac-Moody type give rise to {\em universal K-matrices}, \ie\, 
	universal solutions of Cherednik's generalized reflection equation. 
	The latter can be thought of as a boundary analogue of the Yang-Baxter equation, since it arises as a consistency condition in the case of particles 
	moving on a half-line \cite{cherednik-84, sklyanin-88} and produces representations 
	of type $\sfB$ braid groups (cylindrical braid groups). 
	As in the case of the universal R-matrix, universal K-matrices formally descend to finite-dimensional $\UqLg$-modules. In the case of irreducible modules, they give rise to \emph{trigonometric} K-matrices, \ie rational solutions of the reflection equation satisfying a unitarity condition, see \cite{appel-vlaar-22}.
	
	\subsection{}
	Our main result is the construction of a boundary analogue of the functor $\fun$ for affine QSP 
	subalgebra. The key idea is to enhance the combinatorial model developed by Kang-Kashiwara-Kim in \cite{kang-kashiwara-kim-18} by further taking into account the poles of the trigonometric K-matrix 
	on the representations $\qV{i}$. Such an enhanced combinatorial datum yields a 
	framed quiver $Q$ equipped with a contravariant involution. 
The framing depends on the choice of two (families of) parameters of the QSP subalgebra $\Uqk$, which appear in the expressions of the poles of the K-matrix. For {\em generic} parameters, we expect that the framing is trivial. 
	
	\subsection{}
	In \cite{VV-HecB}, Varagnolo and Vasserot introduced a modified KLR algebra 
	${}^{\theta}\cR_Q(\beta;{\bm \lambda})$, associated to a framed quiver with an involution.
	In this paper, we consider a mild generalization, which we call an \emph{orientifold} KLR algebra ($o$KLR) \cite{Przez-oklr,PAW-B, PAR}. By considering a completed tensor product of
	the $\UqLg$-modules $\qV{i}$, we construct, in Section~\ref{s: boundary SW}, a $(\Uqk,{}^{\theta}\cR_Q(\beta;{\bm\lambda}))$-
	bimodule ${}^\theta\mathbb{V}^{\beta}$, whose $o$KLR algebra action is given in terms of normalized
	R- and K-matrices. This construction yields a functor $\tfun$, which intertwines
	the respective categorical actions, as illustrated below. 
	\begin{equation}\label{eq:summary}
		\begin{tikzcd}[column sep=1.3cm]
			\displaystyle\bigoplus_{\beta}\Modfdgr{\cR_{Q}(\beta)}
			\arrow[r, "\fun"] \arrow[d]&
			\Modfd{\UqLg} \arrow[d] \\
			\displaystyle\bigoplus_{\beta}\Modfdgr{{}^{\theta}\cR_{Q}(\beta;{\bm\lambda})}
			\arrow[r, "\tfun"'] &
			\Modfd{\Uqk}
		\end{tikzcd}
	\end{equation}
	The vertical arrow  on the KLR side is given by induction, while that on the quantum affine
	side is given by restriction. 

Moreover, we prove in Section~\ref{s:BSW-A} that, in the case of quasi-split QSP subalgebras
	of type $\sf AIII$, the functor $\tfun$ recovers the $\imath/\jmath$Schur-Weyl duality between 
	$\Uqk$ and the $3$-parameter affine Hecke algebra of type $\sfC$ constructed by Fan, Lai, Li, Luo, Wang, and 
	Watanabe in \cite{FLLLWW}. The proof, which eventually reduces to an explicit and direct 
	computation, relies on a modified Brundan-Kleshchev-Rouquier isomorphism constructed in Section~\ref{s: BRK} between 
	(certain completions of) $o$KLR algebras and affine Hecke algebras of type $\sfC$, generalizing
	similar constructions obtained in \cite{VV-HecB, PAW-B}.
	
	\subsection{}
	The functor $\tfun$ does not immediately yield an equivalence, but it is expected
	to give rise to one under a suitable localization procedure, which we briefly outline below.
	
In the case of the functor $\fun$, this procedure involves  
	replacing the category of modules over the KLR algebra with
	a {\em localized quotient} $\cT$ \cite{kang-kashiwara-kim-18}. Namely, one first mods out the kernel of the functor $\fun$ 
	and then localizes the quotient with respect to a family of {\em commuting objects} whose image through $\fun$ is the 
	trivial $\UqLg$-module. The functor $\fun$ factors through $\cT$ and yields an equivalence 
	of monoidal categories $\fun: \cT\to\cC_{\scsop{HL}}$, where $\cC_{\scsop{HL}}$ is a
	Hernandez-Leclerc-type subcategory of $\Modfd{\UqLg}$ \cite{HL-cluster, kang-kashiwara-kim-18, fujita-22,
		naoi}.
	
	A similar strategy can be applied towards the construction of a $\cT$-module category 
	${}^{\theta}\cT$, to which $\tfun$ descends,  
	yielding an analogue of the diagram \eqref{eq:summary}. 
The corresponding localization procedure would involve a family of {\em commuting objects} in a {\em module} category. This requires 
	the construction of suitable K-matrices for $o$KLR algebras, which are  expected to correspond, under the functor $\tfun$, to the trigonometric K-matrices 
	for $\UqLg$-modules constructed in \cite{appel-vlaar-22}. The resulting functor 
	is finally expected to give a rise to an equivalence of module categories between 
	${}^{\theta}\cT$ and a boundary analogue of the Hernandez-Leclerc category, which will be the subject of future work.

		

	\subsection{Future directions}
	Following the seminal work of Bao-Wang \cite{bao-wang-18, bao-wang-18b},  a general picture has begun to emerge that most
	fundamental algebraic, geometric, and categorical constructions in quantum groups can be generalized to quantum symmetric pairs. 
	Fundamental results in this direction have been obtained in \cite{ehrig-stroppel,balagovic-kolb-19}.
	In the same spirit, the finite-dimensional
	representation theory of affine QSP subalgebras is expected to be as rich and interesting as that of quantum 
	affine algebras. Several recent advances, such as the new Drinfeld presentation in the split affine QSP case (see, \eg \cite{lu-wang-drin}), and the study of $\sigma$-quiver varieties \cite{Li}, suggest that the algebraic and geometric methods based, respectively, on Drinfeld polynomials and 
	Nakajima quiver varieties are likely to have their QSP analogues. 
	
	The results of the present paper show that the recent approach based on Kang-Kashiwara-Kim-Oh 
	Schur-Weyl duality also successfully extends to quantum symmetric pairs. While a direct 
	understanding of their representation theory is, in large part, still out of reach, the 
	boundary Schur-Weyl duality offers an alternative method, exploiting the combinatorial nature 
	of KLR representation theory. 

	Moreover, the natural grading on $o$KLR algebras suggests the existence of a \emph{graded} representation theory of  quantum affine symmetric pairs. More specifically, we expect the gradings to manifest themselves through $q$-characters and deformations of Grothendieck modules, compatible with analogous deformations of Grothendieck rings in the case of quantum affine algebras \cite{HL-Hall, Nak3, VVper}. 
On the $o$KLR side, the Grothendieck module of the category of finite-dimensional representations has been described 
in terms of certain irreducible highest weight modules $\ttt V(\bm\lambda)$ over the Enomoto--Kashiwara algebra \cite{Enomoto-Kashiwara-06, Enomoto-Kashiwara-08, VV-HecB}. Following \cite{Kleshchev-Ram-11, Przez-oklr}, we expect that significant new information about the representation theory of quantum affine symmetric pairs 
	may be extracted from the dual canonical basis of $\ttt V(\bm\lambda)$. 
	In rank one, this is expected to be related with the combinatorics of tridiagonal pairs developed by Ito-Terwilliger 
	\cite{Ito-Terwilliger-10}. 
	
	\subsection{Outline}
	In Sections \ref{s:affine} and \ref{s:affine-qsp}, we recall the basic definitions
	of quantum affine algebras and their quantum symmetric pairs (QSP). 
	In Section \ref{s:rational}, we briefly discuss the construction of rational K-matrices for 
	finite-dimensional modules over quantum affine algebras.
	In Section \ref{s:oKLR}, we review the definition
	of orientifold KLR algebras ($o$KLR) associated to a framed quiver with an involution
	and we describe the module structure on their category of finite-dimensional graded modules.
	In Section \ref{s:combinatorics}, we introduce the enhanced combinatorial model and
	discuss few examples.
	In Section \ref{s: boundary SW}, we present the main result of the paper, providing the construction of the
	Schur-Weyl duality functor $\tfun$ (Thm. \ref{thm: bimodule}) and its compatibility with the Kang-Kashiwara-Kim
	functor (Thm. \ref{thm: KKK compatibility}).
	In Section \ref{s: BRK}, we construct an isomorphism {à la} Brundan-Kleshchev-Rouquier (BKR) between 
	suitable completions of $o$KLR algebras and affine Hecke algebras of type $\sfC$ (Thm. \ref{thm: bkr iso C}).
	In Section \ref{s:BSW-A}, we prove that, through the BKR isomorphism, the functor 
	$\tfun$ recovers the $\imath/\jmath$Schur-Weyl dualities between quasi-split QSP subalgebras of type $\sf AIII$ and
	affine Hecke algebras of type $\sfC$ constructed in \cite{FLLLWW} (Thm. \ref{thm:ij-schur}).
	
	\addtocontents{toc}{\SkipTocEntry}
	
	\section*{Acknowledgements} 
	We would like to thank Martina Balagovi\'c, Sachin Gautam, Stefan Kolb, Vidas Regelskis, Catharina Stroppel, Paul Terwilliger, Bart Vlaar, and Curtis Wendlandt for their interest in this work and for useful discussions. We also thank the anonymous reviewer for their insightful suggestions and comments which led to an improved manuscript.

	
	\Omit{
		\subsection{Modules}
		TO BE MODIFIED. If $A$ is a graded algebra, let $\gmodv{A}$ be the category of all graded left $A$-modules, with degree-preserving module homomorphisms as morphisms. If $M, N$ are graded $A$-modules, let $\hhom_A(M,N)_n$ denote the space of all homomorphisms homogeneous of degree $n$, and $\Hom_A(M,N) = \bigoplus_{n \in \Z} \hhom_A(M,N)_n$. Let $M\{n\}$ denote the module obtained from $M$ by shifting the grading by $n$. Let $\pmodv{A}$ denote the full subcategory of finitely generated graded projective modules, and $\fmodv{A}$ the full subcategory of graded finite dimensional modules. Given any of these abelian categories $\mathcal{C}$, we denote its Grothendieck group by $[ \mathcal{C} ]$. 
	}

\section{Quantum affine algebras}\label{s:affine}

\summary{\color{magenta} In this section:
	\begin{itemize}
		\item \ref{ss:affine}: definition of $\Uqag$ and $\UqLg$
		\item \ref{ss:QL-representations}: category $\cO$ and fd representations
		\item \ref{ss:R-matrix}: universal R-matrix
		\item \ref{ss:spectral-R}: spectral R-matrix
		\item \ref{ss:rational-R}: trigonometric R-matrix
	\end{itemize}
}

In this section we recall the definition of quantum affine algebras and basic results on
their irreducible finite-dimensional modules.\\


For any lattice $\mathsf{\Lambda}\subset\bbR^n$, we denote by $\mathsf{\Lambda}_+$
its non--negative component. We regard $q$ as an indeterminate and set $\cor \coloneqq \overline{\C(q)}$.

\subsection{Quantum affine algebras}\label{ss:affine}

Let $\g$ be a complex finite-dimensional simple Lie algebra 
with Cartan subalgebra $\h\subset\g$. Let $\fIS\coloneqq\{1,2,\dots, \rank(\g)\}$ 
be the set of vertices of the corresponding Dynkin diagram, $A=(a_{ij})_{i,j\in\fIS}$ 
the Cartan matrix, $\iip{\cdot}{\cdot}$ the normalized invariant bilinear form on $\g$, 
$\Pi\coloneqq\{\rt{i}\;|\; i\in\fIS\}\subset\h^*$ a basis of simple roots and $\Pi^{\vee}\coloneqq\{\cort{i}\;|\; i\in\fIS\}\subset\h$ a basis of simple 
coroots such that $\cp{\rt{i}}{\cort{i}}=a_{ij}$. Let $\Qlat\coloneqq\bbZ\Pi\subset\h^*$
and $\Qlat^{\vee}\coloneqq\bbZ\Pi^\vee\subset\h$ be the root and coroot lattice, respectively.
Let $\rootsys_+\subset\Qp$ be the set of positive roots. 
Finally, let $\fwt{}\coloneqq\{\fwt{i}\;|\; i\in\fIS\}$ be the set of fundamental weights
and $\Plat\coloneqq\bbZ\fwt{}\subset\h^*$ the weight lattice.\\

Let $\wh{\g}$ be the (untwisted) affine Lie algebra\footnote{The results of this paper apply to the case of twisted affine Lie
algebras. However, in order to simplify the exposition, we consider only the untwisted case.}
 associated to $\g$ 
with affine Cartan subalgebra $\wh{\h}\subset\wh{\g}$ \cite[Ch.~7]{kac-90}.
Let $\aIS\coloneqq\{0\}\cup\fIS$ be the set of vertices of the affine Dynkin diagram 
and $\aff{A}=(a_{ij})_{i,j\in\aIS}$ the extended Cartan matrix \cite[Table Aff. 1]{kac-90}. 
We denote by $\aQ$ and $\aQv$ the affine root and coroot lattices, respectively.
Let $\drv{}\in\aQp$ and $c\in\aQvp$ be the unique elements such that
\begin{align}
	\{\lambda\in\aQ\;|\; \forall i\in\aIS,\, \cp{\lambda}{\cort{i}}=0 \}=\Z\drv{}\aand
	\{h\in\aQv\;|\; \forall i\in\aIS,\, \cp{\rt{i}}{h}=0 \}=\Z \central{}
\end{align}
In particular, $\drv{}-\rt{0}=\sum_{i\in\fIS}a_i\rt{i}\in\rootsys_+$ is the highest root, $c$ is central in $\wh{\g}$ and, under the identification $\nu:\wh{\h}\to\wh{\h}^*$
induced by the bilinear form, one has $\nu(c)=\delta$.
We fix $\codrv{}\in\wh{\h}$ such that $\cp{\rt{i}}{\codrv{}}=\drc{i0}$ for any $i\in\aIS$. 
Note that $\codrv{}$ is uniquely defined up to a summand proportional to $c$ and we obtain
a natural identification $\wh{\h}=\h\oplus\bbC c\oplus\bbC\codrv{}$. Finally, we set
$\aQvext\coloneqq\aQv\oplus\bbZ d\subset\wh{\h}$, $\aP\coloneqq\{\lambda\in{\wh{\h}}^*\;|\;\lambda(\aQvext)\subseteq\bbZ\}$,
and $\aPd\coloneqq\aP/(\aP\cap\bbQ\drv{})$.\\

Fix pairwise coprime non-negative integers $\{\de{i}\;|\; i\in\aIS\}$ such that the matrix $(\de{i}a_{ij})_{i,j\in\aIS}$ is symmetric and set $q_i\coloneqq q^{\de{i}}$.
The quantum Kac--Moody algebra associated to $\wh{\g}$ is the 
algebra $\Uqag$ over $\bsF$ with generators $\Eg{i},\Fg{i}$, $i\in\aIS$, and $\Kg{h}$, $h\in\aQvext$,  subject to the following defining relations. For any $h,h'\in\aQvext$,\footnote{In this paragraph only, we denote the zero element of $\aQvext$ by
$\ul{\mathbf{0}}$ in order to avoid confusion with the element $\Kg{0}=\Kg{\de{0}\cort{0}}$.}
\vspace{0.25cm}
\begin{gather}
	\Kg{h}\Kg{h'}=\Kg{h+h'}\quad\mbox{and}\quad\Kg{\ul{\mathbf{0}}}=1\,,
\end{gather}
and, for any $i,j\in\aIS$,		
\begin{gather}
	\Kg{h}\Eg{i}=q^{\cp{\rt{i}}{h}}\Eg{i}\Kg{h}, \qquad\Kg{h}\Fg{i}=q^{-\cp{\rt{i}}{h}}\Fg{i}\Kg{h}\\
	[\Eg{i},\Fg{j}]=\drc{ij}\frac{\Kg{i}-\Kg{i}^{-1}}{q_i-q_i^{-1}}\\
	\mathsf{Serre}_{ij}(\Eg{i},\Eg{j})=0=\mathsf{Serre}_{ij}(\Fg{i},\Fg{j})\quad(i\neq j)
\end{gather}
where $\Kg{i}^{\pm1}\coloneqq\Kg{\pm \de{i}\cort{i}}$ and $\mathsf{Serre}$ denotes the
usual quantum Serre relations (see, \eg~\cite{lusztig-94}).
On $\Uqag$ we consider the Hopf algebra structure with coproduct
\begin{align}
	\Delta(\Kg{h})=\Kg{h}\ten\Kg{h}, \quad 
	\Delta(\Eg{i})=\Eg{i}\ten1+\Kg{i}\ten\Eg{i}, \quad
	\Delta(\Fg{i})=\Fg{i}\ten\Kg{i}^{-1}+1\ten\Fg{i}
\end{align}
Finally, the Chevalley involution $\chev:\Uqag\to(\Uqag)^{\scsop{op}}$ 
is the isomorphism of Hopf algebras defined by 
\begin{equation}\label{eq:chevalley}
	\chev(\Kg{h})=\Kg{-h},\quad \chev(\Eg{i})=-\Fg{i},\quad \chev(\Fg{i})=-\Eg{i}\,
\end{equation}
for any $i\in\aIS$ and $h\in\aQvext$.\\

We denote by $\Uqanp$ (resp. $\Uqanm$) the subalgebra generated by the elements $\{\Eg{i}\}_{i\in\aIS}$ (resp. $\{\Fg{i}\}_{i\in\aIS}$), 
and we set $\Uqabpm\coloneqq\Uqanpm\Uqah$, where $\Uqah$ is the commutative subalgebra generated by $\Kg{h}$, $h\in\aQvext$. 
Similarly, 	we denote by $\Uqg$ the finite-type subalgebra generated by $\Eg{i}, \Fg{i}$, $i\in\fIS$,
and $\Kg{h}$, $h\in\Qlatv$, and by $\Uqh, \Uqnpm, \Uqbpm\subset\Uqg$ their {finite-type} counterparts.
Finally, we denote by $\Uqagp$ the subalgebra obtained by replacing the extended coroot lattice $\aQvext$ with the standard coroot lattice
$\aQv$.\\

The \emph{quantum loop algebra} $\UqLg$ is the quotient of $\Uqagp$ by the ideal generated 
by $\Kg{c}-1$, where $\Kg{c}=\Kg{0}\prod_{i\in\fIS}\Kg{i}^{a_i}$. Note that the Hopf algebra structure and the 
Chevalley involution descend to $\UqLg$.

\subsection{Finite--dimensional modules}\label{ss:QL-representations}
We assume henceforth that every module is type $\bf{1}$, \ie the action of the generators $\Kg{i}$ ($i\in\aIS$) is semisimple with eigenvalues in $q^{\bbZ}$. Recall that the categories of finite--dimensional modules over $\Uqagp$ and $\UqLg$ are isomorphic, since the action of 
the central element $\Kg{c}$ is necessarily trivial. Note also that finite--dimensional modules admit a weight decomposition over $\aPd$.

It is well--known that irreducible modules are classified by $\rank(\g)$--tuples of monic polynomials in $\bbC[u]$ \cite[Thm.~12.2.6]{chari-pressley}.
The category $\Modfd{\UqLg}$ is monoidal, but it is not semisimple, and it is not braided in the usual sense (see~Section \ref{ss:spectral-R}). We denote by $\CUqLg{}$ the completion of $\UqLg$ with respect to finite--dimensional module, \ie $\CUqLg{}$ is the algebra of endomorphisms of the
forgetful functor from $\Modfd{\UqLg}$ to $\cor$--vector spaces. The monoidal structure induces on $\CUqLg{}$ 
the structure of a cosimplicial algebra (see, \eg \cite[\S 2.10-2.11]{appel-vlaar-20} and \cite[\S 8.9]{appel-toledano-19b}).

\subsection{The spectral R-matrix}\label{ss:spectral-R}
Set $\Uqag[z,z^{-1}]\coloneqq\Uqag\ten\bsF[z,z^{-1}]$ and 
consider the \emph{(homogeneous) grading shift} automorphism
\begin{equation}\label{eq:grading-shift}
	\shift{z}: \Uqag[z,z^{-1}]\to\Uqag[z,z^{-1}]
\end{equation}
given by $\shift{z}(\Kg{h})\coloneqq\Kg{h}$, $\shift{z}(\Eg{i})\coloneqq z^{\drc{i0}}\Eg{i}$,
and $\shift{z}(\Fg{i})\coloneqq z^{-\drc{i0}}\Fg{i}$. Note that, by specializing $z$
in $\bsF^{\times}$, we obtain a one-parameter family of automorphism of $\Uqag$. 
Let $	\Delta_{z}, \Deltaop_{z}:\Uqag[z,z^{-1}]\to(\Uqag\ten\Uqag)[z,z^{-1}]$
be the \emph{shifted coproduct} defined by $\Delta_z(x)\coloneqq\id\ten\shift{z}(\Delta(x))$ and $\Deltaop_z(x)\coloneqq\id\ten\shift{z}(\Deltaop(x))$.
The grading shift naturally descends on $\UqLg$. For any $V\in\Modfd{\UqLg}$
with action $\pi_V:\UqLg\to\End(V)$, we consider the infinite-dimensional modules
$\shrep{V}{z}\coloneqq V\ten\bsF(z)$ and $\Lshrep{V}{z}\coloneqq V\ten\Lfml{\bsF}{z}$
with the natural action given by $\pi_V(\shift{z}(x))$. 

By \cite{drinfeld-quantum-groups-87}, $\UqLg$ has a universal 
\emph{spectral} R-matrix, \ie
a formal series $\sRM{}{z}$, whose coefficients belong to a suitable completions of $\UqLg\ten\UqLg$, 
such that $\shift{a}\ten\shift{b}(\sRM{}{z})=\sRM{}{\frac{b}{a}z}$ ($a,b\in\bsF^{\times}$) and the 
following identities are satisfied:
\begin{align}
	\sRM{}{z}\Delta_{z}(x)=& \ \Deltaop_{z}(x)\sRM{}{z}\, ,\\
	\Delta_z\ten\id(\sRM{}{zw})=& \ \sRM{13}{zw} \sRM{23}{w}\, ,\\
	\id\ten\Delta_w(\sRM{}{z})=& \ \sRM{13}{z} \sRM{12}{zw}\, .
\end{align}
In particular, the Yang--Baxter equation holds:
\begin{equation}\label{eq:z-YBE}
	\sRM{12}{z} \sRM{13}{zw} \sRM{23}{w} = \sRM{23}{w} \sRM{13}{zw} \sRM{12}{z}\, .
\end{equation}
Moreover, for any $V,W\in\Modfd{\UqLg}$, the operator
\begin{equation}\label{eq:sRM-rep}
	\sRM{VW}{z}\coloneqq\pi_{V}\ten\pi_W(\sRM{}{z})
	\in\fml{\End(V\ten W)}{z}
\end{equation}
is well-defined and yields an intertwiner
\begin{equation}\label{eq:sRM-rep}
	\sRMv{VW}{z}\coloneqq(1\,2)\circ\sRM{VW}{z}:V\ten\Lshrep{W}{z}\to \Lshrep{W}{z}\ten V\, .
\end{equation}

\Omit{
\begin{theorem}\label{thm:spectral-R}\hfill
	\begin{enumerate}\itemsep0.25cm
		\item The quantum loop algebra $\UqLg$ has a universal \emph{spectral} R-matrix, \ie
		a formal series $\sRM{}{z}$, whose coefficients belong to a suitable completions of $\UqLg\ten\UqLg$, 
		such that $\shift{a}\ten\shift{b}(\sRM{}{z})=\sRM{}{\frac{b}{a}z}$ ($a,b\in\bsF^{\times}$) and the 
		following identities are satisfied:
		\begin{align}
			\sRM{}{z}\Delta_{z}(x)=& \ \Deltaop_{z}(x)\sRM{}{z}\, ,\\
			\Delta_z\ten\id(\sRM{}{zw})=& \ \sRM{13}{zw} \sRM{23}{w}\, ,\\
			\id\ten\Delta_w(\sRM{}{z})=& \ \sRM{13}{z} \sRM{12}{zw}\, .
		\end{align}
		In particular, the Yang--Baxter equation holds:
		\begin{equation}
			\sRM{12}{z} \sRM{13}{zw} \sRM{23}{w} = \sRM{23}{w} \sRM{13}{zw} \sRM{12}{z}\, .
		\end{equation}
		\item For any $V,W\in\Modfd{\UqLg}$, the operator
		\begin{equation}\label{eq:sRM-rep}
			\sRM{VW}{z}\coloneqq\pi_{V}\ten\pi_W(\sRM{}{z})
			\in\fml{\End(V\ten W)}{z}
		\end{equation}
		is well-defined and yields an intertwiner
		\begin{equation}\label{eq:sRM-rep}
			\sRMv{VW}{z}\coloneqq(1\,2)\circ\sRM{VW}{z}:V\ten\Lshrep{W}{z}\to \Lshrep{W}{z}\ten V\, .
		\end{equation}
	\end{enumerate}
\end{theorem}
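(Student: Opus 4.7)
The plan is to derive both claims from the existence of the universal R-matrix for the quantum affine algebra $\Uqag$, which is classical. First I would invoke Drinfeld's quantum double construction (or equivalently the Khoroshkin--Tolstoy factorization) to obtain an element $\Rm$ living in a suitable weight-graded completion of $\Uqabp \ten \Uqabm$ and satisfying the intertwining identity $\Rm\Delta(x) = \Deltaop(x)\Rm$ for all $x \in \Uqag$, together with the standard coproduct identities $(\Delta \ten \id)(\Rm) = \Rm_{13}\Rm_{23}$ and $(\id \ten \Delta)(\Rm) = \Rm_{13}\Rm_{12}$. Since $\Rm$ has trivial $\Kg{c}$-weight on each homogeneous piece, the construction descends to the quotient $\UqLg$, and the spectral R-matrix is then defined as $\sRM{}{z} \coloneqq (\id \ten \shift{z})(\Rm)$.

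For the scaling property, the key observation is that on a homogeneous piece of bidegree $(\alpha, -\alpha)$ with $\alpha = \sum_{i \in \aIS} k_i \rt{i} \in \aQp$, the shift $\shift{a}\ten\shift{b}$ acts by the scalar $a^{k_0}b^{-k_0}$, which matches the substitution $z \mapsto (b/a)z$ in $\sRM{}{z}$. The first functional identity follows by applying $\id \ten \shift{z}$ to the universal intertwining property and unpacking the definitions $\Delta_z = (\id \ten \shift{z})\circ\Delta$ and $\Deltaop_z = (\id \ten \shift{z})\circ\Deltaop$. For the two coproduct identities I would apply appropriate combinations of grading shifts on the third (resp. first) tensor factor to the universal coproduct relations for $\Rm$, using that $\shift{z}$ is a Hopf algebra automorphism so that $(\shift{z}\ten\shift{z})\circ\Delta = \Delta\circ\shift{z}$. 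Finally, the spectral Yang--Baxter equation \eqref{eq:z-YBE} follows formally from these two coproduct identities combined with the intertwining identity, exactly as in any quasi-triangular Hopf algebra.

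For the representation-theoretic part, convergence of $\pi_V \ten \pi_W(\Rm)$ on a pair of finite-dimensional modules $V, W \in \Modfd{\UqLg}$ is ensured by a weight-space argument: since $V \ten W$ carries finitely many weights, only finitely many monomials in $\Rm$ can act nontrivially on any fixed weight space. After applying $\shift{z}$, the monomials of $\rt{0}$-degree $n$ contribute to the coefficient of $z^n$, so each such coefficient is a finite sum of endomorphisms of $V \ten W$, yielding a well-defined element of $\fml{\End(V\ten W)}{z}$. The intertwiner property of the flip $\sRMv{VW}{z} \colon V \ten \Lshrep{W}{z} \to \Lshrep{W}{z} \ten V$ is then a direct transcription of the universal intertwining identity applied to the representations $V$ and $\Lshrep{W}{z}$.

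The only nontrivial point is the bookkeeping of the completions: one must choose a completion of $\Uqag \ten \Uqag$ compatible both with the grading shift $\shift{z}$ and with evaluation on finite-dimensional modules, and verify that the coefficients of $(\id \ten \shift{z})(\Rm)$ make sense as formal series of endomorphisms after specialization. Once this is arranged, the algebraic identities reduce to formal manipulations and the convergence on representations is controlled by the weight argument above. I do not expect any further serious obstacle, since the construction is functorial in $\Uqag$ and the affine case follows the same blueprint as the finite-type one.
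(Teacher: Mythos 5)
The paper does not actually prove this statement: the theorem block you were given is wrapped in an \verb|\Omit{...}| in the source, and the live text simply attributes the result to Drinfeld's 1987 ICM address without argument. So there is no internal proof to compare against, and your sketch is filling in a gap the authors chose to leave to the literature.

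Your outline is the standard one (Drinfeld double / Khoroshkin--Tolstoy factorization, followed by specialization via $\shift{z}$), and the functional identities, the scaling property via the $\rt{0}$-degree bookkeeping, and the weight-space convergence argument on finite-dimensional modules are all correct and match the expected proof. The one step that is stated imprecisely is the passage ``Since $\Rm$ has trivial $\Kg{c}$-weight on each homogeneous piece, the construction descends to the quotient $\UqLg$.'' The quasi-R-matrix factor $\Theta = \sum_{\beta\in\aQp}\Theta_\beta$ is indeed $K_c$-neutral term by term, but the delicate part of the universal R-matrix of $\Uqag$ is its Cartan factor, which in affine type contains $q^{\,c\otimes \codrv{}+\codrv{}\otimes c}$. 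This factor does not descend to $\UqLg\ten\UqLg$ as an algebraic element (since $\codrv{}$ is killed), and it is only on pairs of level-$0$ modules that it becomes the identity because $c$ acts by zero; the grading shift $\shift{z}$ then takes over the role that $q^{\codrv{}}$ plays at nonzero level. You implicitly rely on this when you define $\sRM{}{z}\coloneqq(\id\ten\shift{z})(\Rm)$ and verify convergence, so the argument is correct in spirit, but the justification for why $\Rm$ makes sense on $\Modfd{\UqLg}$ in the first place should invoke the vanishing of $c$ rather than a degree count on $\Theta$ alone.
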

}
%


\subsection{The trigonometric R-matrix}\label{ss:rational-R}
In the case of irreducible modules the operator $\sRM{VW}{z}$ is, 
up to a scalar factor, a {\em trigonometric} R-matrix. 

\begin{theorem}[\cite{drinfeld-quantum-groups-87}]\label{thm:rational-R}
	Let $V,W\in\Modfd{\UqLg}$ be two irreducible modules.
	There exists a canonical function $f_{VW}(z)\in\Lfml{\bsF}{z}$ such that
	\begin{equation}
		\rRM{VW}{z}\coloneqq f_{VW}(z)^{-1}\sRM{}{z}\in\Lfml{\End(V\ten W)}{z}
	\end{equation} 
	is a rational non-vanishing operator, satisfying the spectral Yang-Baxter equation \eqref{eq:z-YBE} and the unitarity relation 
	\begin{equation}\label{eq:unitarity-R}
		\rRM{VW}{z}^{-1}=(1\, 2)\circ\rRM{WV}{z^{-1}}\circ(1\, 2)\,.
	\end{equation}
\end{theorem}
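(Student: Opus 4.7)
The plan is to deduce the claim from Schur's lemma applied to generic tensor products of irreducibles, combined with a rationality argument for the space of intertwiners. First, I would establish that, for $z$ in a Zariski open subset of $\bsF^{\times}$, the module $V \otimes \shrep{W}{z}$ is irreducible over $\UqLg$. This follows from a standard cyclicity/co-cyclicity argument based on the Drinfeld polynomial classification of finite-dimensional irreducibles: outside a discrete set of values of $z$, the tensor product is generated by its pseudo-highest $\ell$-weight vector and cogenerated by its pseudo-lowest one. Extending scalars to $\bsF(z)$ and applying Schur's lemma, the space
\[
\hhom_{\UqLg}\bl V \otimes \shrep{W}{z},\; \shrep{W}{z} \otimes V \br
\]
is at most one-dimensional over $\bsF(z)$.

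Next, I would produce a rational, non-vanishing generator of this Hom-space. The intertwiner condition is a linear system over $\bsF[z,z^{-1}]$ on the matrix coefficients in any fixed basis of $V \otimes W$, and Cramer's rule over $\bsF(z)$ yields a rational solution; non-vanishing at generic $z$ follows from irreducibility. On the other hand, the universal spectral R-matrix produces, via $\pi_V \otimes \pi_W$, a formal operator $\sRM{VW}{z}$ satisfying the same intertwining identity, and by uniqueness it must agree with a scalar multiple of the rational generator. This scalar is the canonical function $f_{VW}(z) \in \Lfml{\bsF}{z}$ appearing in the statement, and $\rRM{VW}{z} \coloneqq f_{VW}(z)^{-1}\sRM{VW}{z}$ is then rational and non-vanishing.

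The spectral Yang--Baxter equation for $\rRM{VW}{z}$ follows from \eqref{eq:z-YBE} applied to $\pi_V \otimes \pi_W \otimes \pi_U$, since the scalar prefactors $f_{(-)}$ cancel on each tensor slot. For the unitarity relation, the composition
\[
\bigl((1\,2) \circ \rRM{WV}{z^{-1}} \circ (1\,2)\bigr) \circ \rRM{VW}{z}
\]
is a $\UqLg$-endomorphism of $V \otimes \shrep{W}{z}$, hence a scalar by Schur's lemma; evaluating on a pseudo-highest weight vector shows that the scalar equals $1$, the required compatibility between $f_{VW}(z)$ and $f_{WV}(z^{-1})$ being built into the canonical normalization. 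The main obstacle is extracting a rational operator from the \emph{a priori} formal expression provided by $\sRM{}{z}$: controlling the denominators arising from the normal-ordered product over positive affine roots is genuinely delicate, and is where the precise structure of Drinfeld's new realization typically enters.
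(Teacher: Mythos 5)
Your proposal follows essentially the same route as the paper's (largely citation-based) treatment: generic irreducibility of $V \otimes W(z)$ giving a one-dimensional intertwiner space via Schur's lemma, identification of the rational generator with a scalar multiple of the formal $\sRM{VW}{z}$, and the normalization on highest weight vectors $\rRM{}{z}(v \otimes w) = v \otimes w$ fixing $f_{VW}$ and yielding unitarity. The paper itself only cites Drinfeld and remarks on these two ingredients (generic irreducibility, cf.\ Kazhdan--Soibelman and Chari, and the highest-weight normalization), so your fleshing-out is consistent with it; your closing caveat about the delicacy of extracting rationality from the formal universal R-matrix correctly identifies where the substantive analytic work, deferred in both treatments, actually lies.
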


The proof of the theorem relies on the \emph{generic irreducibility} of the tensor product
$V\ten W$, \ie on the irreducibility of the module $V\ten\Lfml{W}{z}$ over $\Lfml{\UqLg}{z}$
(cf.~ \cite[\S 4.2]{kazhdan-soibelman-95} or \cite[Thm.~3]{chari-02}). Note also that the function $f_{VW}(z)$ is uniquely determined by the condition $\rRM{}{z}(v\ten w)=v\ten w$, where $v\in V$
and $w\in W$ are highest weight vectors. 

\begin{remarks}\label{rmk:rational-R}
\hfill
\begin{enumerate}\itemsep0.25cm
\item 
As before, this yields a rational intertwiner 
\[\rRMv{VW}{z}\coloneqq(1\,2)\circ\rRM{VW}{z}:V\ten\shrep{W}{z}\to \shrep{W}{z}\ten V\,,\] 
which is sometimes referred to
as the \emph{normalized} R-matrix, see \eg \cite{kang-kashiwara-kim-18}.
\item 
If  $V$ is an irreducible \emph{real} module, \ie $V\ten V$ is irreducible, 
the normalized R--matrix satisfies $\rRMv{VV}{1}=\id$ (see, \eg \cite{frenkel-hernandez-reshetikhin-21}).
\item By considering the shifted module $\shrep{V}{w}$, one obtains an intertwiner
\[\rRMv{VW}{w,z}\coloneqq \rRMv{\shrep{V}{w}W}{z}:\shrep{V}{w}\ten\shrep{W}{z}\to \shrep{W}{z}\ten \shrep{V}{w}\,.\]
Since the spectral R--matrix satisfies the identity $\shift{a}\ten\id(\sRM{}{z})=\sRM{}{\frac{z}{a}}$ ($a\in\bsF^{\times}$), 
it follows that $\rRMv{VW}{w,z}$ depends rationally on $z/w$ only, and it is
denoted $\rRMv{VW}{z/w}$. 
\rmkend
\end{enumerate}
\end{remarks}

\Omit{
Recall that an irreducible $V\in\Modfd{\UqLg}$ is \emph{real} if $V\ten V$ is irreducible. For example, fundamental modules and Kirillov-Reshetikhin modules are real, see \cite{kashiwara-02, chari-hernandez-10, frenkel-hernandez-reshetikhin-21}. 
From the unitarity and non-vanishing conditions, one obtains the following corollary.

[see, \eg \cite[Lem.~10.3]{frenkel-hernandez-reshetikhin-21}]

\begin{corollary}\label{cor:R-regularity-at-1}
	Let $V,W\in\Modfd{\UqLg}$ be irreducible modules. 
	For any $\zeta\in\bsF^\times$ such that $V\ten\shrep{W}{\zeta}$ is irreducible, 
	$\rRM{VW}{\zeta}$ is well-defined. In particular, if $V$ is real, 
	$\rRMv{VV}{1}=\id$.
\end{corollary}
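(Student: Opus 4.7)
The plan is to first establish regularity of $\rRM{VW}{z}$ at $\zeta$ by a contradiction argument exploiting the intertwining property together with the normalization of Theorem~\ref{thm:rational-R}, and then identify $\rRMv{VV}{1}$ with the identity via Schur's lemma and the unitarity relation.

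For the first statement, I would assume for contradiction that $\rRM{VW}{z}$ has a pole of order $n\geq 1$ at $z=\zeta$, and write the Laurent expansion $\rRM{VW}{z} = \sum_{k\geq -n}(z-\zeta)^k B_k$ with $B_{-n}\neq 0$. Since $f_{VW}(z)$ is a scalar, $\rRM{VW}{z}$ inherits the intertwining property $\rRM{VW}{z}\Delta_z(x) = \Delta_z^{\scsop{op}}(x)\,\rRM{VW}{z}$ from the universal R-matrix. Extracting the $(z-\zeta)^{-n}$-coefficient of this identity and composing with the flip $(1\,2)$ produces a non-zero $\UqLg$-module homomorphism $A_{-n}\colon V\ten\shrep{W}{\zeta}\to\shrep{W}{\zeta}\ten V$. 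On the other hand, the normalization $\rRM{VW}{z}(v_{hw}\ten w_{hw}) = v_{hw}\ten w_{hw}$ characterising $f_{VW}(z)$ forces every negative-order coefficient $B_k$, $k<0$, to vanish on $v_{hw}\ten w_{hw}$; hence $v_{hw}\ten w_{hw}\in\ker A_{-n}$. Thus $\ker A_{-n}$ is a non-zero proper submodule of $V\ten\shrep{W}{\zeta}$, contradicting the irreducibility hypothesis.

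For the second statement, suppose $V$ is real, so that $V\ten V$ is irreducible. The first part, applied with $W=V$ and $\zeta=1$, shows that $\rRMv{VV}{1}$ is well-defined; by Schur's lemma, $\rRMv{VV}{1} = c\cdot\id$ for some $c\in\bsF^\times$. Rewriting the unitarity relation \eqref{eq:unitarity-R} in terms of the normalized intertwiner $\rRMv{VW}{z}=(1\,2)\circ\rRM{VW}{z}$ gives $\rRMv{VV}{z^{-1}}\circ\rRMv{VV}{z}=\id$, and setting $z=1$ yields $c^2=1$. The sign is fixed by evaluating on $v_{hw}\ten v_{hw}$: this vector is preserved by $\rRM{VV}{1}$ by normalization and by the flip $(1\,2)$ since both tensor factors coincide, whence $c=1$.

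The main obstacle is the verification in the first part that the leading Laurent coefficient $A_{-n}$ is a genuine $\UqLg$-homomorphism between $V\ten\shrep{W}{\zeta}$ and $\shrep{W}{\zeta}\ten V$, which requires carefully handling the grading shift appearing in $\Delta_z$ before and after specialization at $z=\zeta$. Once this identification is done, the highest-weight vector argument produces the contradicting submodule essentially for free.
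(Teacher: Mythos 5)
Your argument is correct, and it is the standard one (essentially that of \cite[Lem.~10.3]{frenkel-hernandez-reshetikhin-21}, to which the paper itself defers without writing out a proof). The key step — that the leading Laurent coefficient $B_{-n}$ intertwines the $\zeta$-shifted action — does go through as you anticipate: $(\pi_V\otimes\pi_{W,z})(\Delta(x))$ is Laurent polynomial in $z$, hence regular at $\zeta\in\bsF^\times$, so multiplying the intertwining identity by $(z-\zeta)^n$ and specializing at $z=\zeta$ gives $B_{-n}\,\Delta_\zeta(x)=\Delta^{\scsop{op}}_\zeta(x)\,B_{-n}$, and composing with the flip yields a genuine module map $A_{-n}\colon V\otimes\shrep{W}{\zeta}\to\shrep{W}{\zeta}\otimes V$. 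Since $A_{-n}\neq 0$ by the choice of $n$, and $v_{hw}\otimes w_{hw}\in\ker A_{-n}$ by the normalization, $\ker A_{-n}$ is a nonzero proper submodule, contradicting irreducibility. The same reasoning can be phrased more directly using the non-vanishing property of Theorem~\ref{thm:rational-R}: writing $\rRMv{VW}{z}=d_{VW}(z)^{-1}P(z)$ with $P$ polynomial and non-vanishing, if $d_{VW}(\zeta)=0$ then $P(\zeta)$ is a nonzero intertwiner annihilating $v_{hw}\otimes w_{hw}$, which sidesteps any discussion of pole order.

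One small remark on the second half: invoking unitarity to obtain $c^2=1$ before determining the sign is a detour. Once Schur's lemma gives $\rRMv{VV}{1}=c\cdot\id$, evaluating on $v_{hw}\otimes v_{hw}$ — which is fixed both by $\rRM{VV}{1}$ (normalization) and by the flip — already forces $c=1$ directly. The unitarity relation is of course consistent with this, but it is not needed here.
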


\begin{example}\label{ex:R-matrix-sl2}
	Let $\g=\mathfrak{sl}(2)$, $V_1=\bbC^2$ the fundamental module, and 
	$\evrep{1}{a}$ the corresponding evaluation module of $U_qL\mathfrak{sl}(2)$ at 
	$a\in\bbC^\times$ (see \eg \cite{chari-pressley}). In the case of $\evrep{1}{a}\ten\evrep{1}{bz}$, the rational function $\rRM{a,b}{z}\coloneqq\rRM{\evrep{1}{a}\,\evrep{1}{b}}{z}$ is easily computed (see \eg
	\cite[12.5.7]{chari-pressley}). Set $\lambda=b/a$. Then,
	\begin{equation}
		\rRM{a,b}{z}\coloneqq
		\left[
		\begin{array}{cccc}
			1 & 0 & 0 & 0\\
			0 & \frac{q(1-\lambda z)}{q^2-\lambda z} & \frac{\lambda z(q^2-1)}{q^2-\lambda z} & 0\\
			0 & \frac{q^2-1}{q^2-\lambda z} & \frac{q(1-\lambda z)}{q^2-\lambda z} & 0\\
			0 & 0 & 0 & 1
		\end{array}
		\right]
	\end{equation}
	Note that, at $z=1$, the R-matrix has a pole if $\lambda=q^{2}$, while it is not full rank if $\lambda=q^{-2}$. 
	It is well-known that precisely in these two cases the module $\evrep{1}{a}\ten\evrep{1}{b}$ fails to be
	irreducible. 
	\rmkend
\end{example}
}


\section{Quantum affine symmetric pairs}\label{s:affine-qsp}

\summary{In this section:
	\begin{itemize}
		\item \ref{ss:gsat}: generalized Satake diagrams
		\item \ref{ss:pseudo-involutions}: pseudo involutions
		\item \ref{ss:qtheta}: quantum pseudo-involutions
		\item \ref{ss:qsp}: quantum symmetric pairs
		\item \ref{ss:univ-kmx}: universal $k$--matrices
	\end{itemize}
}

\subsection{Generalized Satake diagrams}\label{ss:gsat}
Classical and quantum Kac-Moody algebras are defined in terms of 
a combinatorial datum encoded by the Dynkin diagram and the Cartan matrix. 
Similarly, classical and quantum symmetric pairs 
(the latter are also known as {\em Letzter-Kolb coideal subalgebras} or {\em $\iota$quantum groups}) 
arise from a refinement of such a datum, see, \eg  \cite{kac-wang-92,letzter-02,kolb-14}.\\

Let $\Aut(\aff{A})$ be the group of \emph{diagram automorphisms} of the affine
Cartan matrix, \ie the group of bijections $\tau:\aIS\to\aIS$ such that
$a_{ij}=a_{\tau(i)\tau(j)}$.
Let $X \subset\aIS$ be a proper subset of indices. Note, in particular, that the 
corresponding Cartan matrix $A_X$ is necessarily of finite type. We denote by 
$\oi_X\in\Aut(A_X)$ the 
{\em opposition involution of $X$}, \ie the involutive diagram automorphism on $X$ 
induced by the action of the longest element $w_X$ of the parabolic Weyl group $W_X$ on $\Qlat_X$, see, \eg
~\cite[\S 3.11]{appel-vlaar-20}. \\

Following the recent approach proposed by Regelskis and Vlaar in \cite{regelskis-vlaar-20,regelskis-vlaar-21}, we say that a pair $(X,\tau)$ is a \emph{(generalized affine) Satake diagram} and write $(X,\tau)\in\gsat{\aff{A}}$ if $X \subset\aIS$, $\tau$ is an involutive diagram automorphism which preserves $X$, and
\begin{enumerate}
	\item $\tau|_X = \oi_X$,
	\item for any $i \in \aIS \setminus X$ such that $\tau(i)=i$, the connected component of $X \cup \{ i \}$ containing $i$ is not of type ${\sf A}_2$.
\end{enumerate}
A list of Satake diagrams for $\aff{A}$ is given in \cite[App.~A, Tables 5, 6, 7]{regelskis-vlaar-21}.
Henceforth, we fix an affine Satake diagram $(X,\tau)\in \gsat{\aff{A}}$.

\subsection{Pseudo--involutions}\label{ss:pseudo-involutions}
The diagram automorphism $\tau\in\Aut(\wh{A})$ extends canonically to an automorphism
of $\wh{\g}'$, given on the generators by $\tau(\cort{i})\coloneqq\cort{\tau(i)}$, $\tau(e_i)=e_{\tau(i)}$, and $\tau(f_i)=f_{\tau(i)}$. 
We then consider the Lie algebra automorphism $\tsat:\agp\to\agp$ given by
\begin{align}\label{def:theta}
	\tsat \coloneqq \Ad(\wt{w}_X) \circ \omega \circ \tau\,,
\end{align}
where $\omega$ denotes the Chevalley involution and $\Ad(\wt{w}_X)$ denotes the
automorphism of $\g$ given by the triple exponentials.\footnote{More precisely, if $w_X=s_{i_1}\cdots s_{i_k}$ is a reduced expression of the 
longest element, then $\wt{w}_X=\wt{s}_{i_1}\cdots\wt{s}_{i_k}$, where $s_i=\exp(e_i)\exp(-f_i)\exp(e_i)$. It is well--known that the operators $s_i$ ($i\in I$)
induce an action of the braid group on any integrable $\g$--module. In particular, their adjoint action
gives rise to an automorphism of $\g$. For a concise summary of the main properties of triple exponentials, we refer the reader to \cite[\S 3.7]{appel-vlaar-20}.}
Note that, since $\cp{\rt{i}}{\tau(c)}=\cp{\rt{\tau(i)}}{c}=0$, 
one has $\tau(c)=c$ and $\tsat(c)=-c$. In \cite{regelskis-vlaar-21}, $\tsat$ is referred to as a \emph{pseudo--involution of $\wh{\g}$
	of the second kind} (cf.~\cite{kac-wang-92}). 


\subsection{Quantum pseudo--involution}\label{ss:qtheta} 
The pseudo--involution $\tsat$ has a distinguished lift $\tsatq$ at the quantum level
as an algebra automorphism of $\Uqagp$.
This is obtained by choosing a suitable lift for each of the three factors in $\tsat$. We follow the construction of
$\tsatq$ given in \cite{appel-vlaar-20}. First, one
considers the usual Chevalley involution on $\Uqag$ given by \eqref{eq:chevalley}.
Then, the diagram automorphism $\tau$ extends canonically to an autormorphism of 
$\Uqagp$ given on the generators by $\tau(\Eg{i})=\Eg{\tau(i)}$, $\tau(\Fg{i})=\Fg{\tau(i)}$ and $\tau(\Kg{h})=\Kg{\tau(h)}$.

By ~\cite[\S 5]{lusztig-94}, the quantum Weyl group  of $\Uqag$ provides a suitable quantisation of $w_X\in W_X$.
Namely, let $\qWS{X}$ be the quantum Weyl group operator corresponding to $w_X$ acting on any integrable $\Uqag$--module,\footnote{More precisely, given a reduced expression $s_{i_1}\cdots s_{i_{\ell}}$ of $w_X$ in
	terms of fundamental reflections, set $\qWS{X}\coloneqq \qWS{i_1}\cdots\qWS{i_{\ell}}$, where $\qWS{j}=T''_{j,1}$ in the notation from \cite[5.2.1]{lusztig-94}.} and set $	\bt{\tsat} \coloneqq \tcorr{\tsat}\cdot\qWS{X}$\,.
Here $\tcorr{\tsat}$ is the operator defined on any weight vector
of weight $\lambda$ as the multiplication by $q^{\iip{\tsat(\lambda)}{\lambda}/2+\iip{\lambda}{\rho_X}}$, where  $\rho_X$ is the half--sum of the positive roots in $\rootsys_X$. Therefore, $\bt{\tsat}$ can be thought of as an element of the completion of $\Uqag$ with respect to integrable
modules.
By \cite[Lemma~4.3 (iii)]{appel-vlaar-20}, $\adbt{\tsat}\coloneqq\Ad{\bt{\tsat}}$ yields an automorphism 
of $\Uqag$.

The \emph{quantum pseudo--involution} $\tsatq$ is given by
\begin{equation}
	\tsatq\coloneqq\adbt{\tsat}\circ\omega\circ\tau\,.
\end{equation}
Note that, as in the classical case, $\tsatq$ is independent of the order of the three factors.
Moreover, $\tsatq$ descends to an automorphism of $\UqLg$.


\subsection{QSP subalgebras}\label{ss:qsp}
It is well--known that there is a \emph{family} of coideal subalgebras of $\Uqag$ associated to the pseudo--involution $\tsat$, 
introduced in \cite{letzter-02,kolb-14,regelskis-vlaar-20}
and parametrized by two sets, 
$\Parsetc\subset(\bsF^\times)^{\aIS}$ and $\Parsets\subset\bsF^{\aIS}$ (cf.~Remark~\ref{rmk:parameters}).\\

Let $\Uqag_X\subset\Uqag$ be the subalgebra generated by $E_i, F_i$, and $K_i$ ($i\in X$).
The {\em QSP subalgebra} of $\Uqag$ corresponding to $\tsat$ with parameters  $(\Parc,\Pars)\in \Parsetc \times \Parsets$ is the subalgebra
$\Uqk\subseteq\Uqag$ generated by  $\Uqag_X$, the elements $\Kg{h}$ with $h\in(\aQvext)^\tsat$, and the elements $\Bg{i}$ ($i\in \aIS \setminus X$) given by
\begin{equation} \label{Bi:def}
	\Bg{i} \coloneqq \Fg{i} + \parc{i}\cdot\tsatq(\Fg{i}) + \pars{i}\cdot\Kg{i}^{-1}\,.
\end{equation}

In the following, a morphism of $\Uqk$--modules will be simply referred to
as a {\em QSP intertwiner}.

\begin{remarks}\label{rmk:parameters}
	\hfill
	\begin{enumerate}[leftmargin=2em]\itemsep0.25cm
		\item
		Roughly, $\Parsetc\times\Parsets$ is the set of all pairs $(\parc{},\pars{})$ satisfying 
		$(\parc{i},\pars{i})=(1,0)$ ($i\in X$) and $\Uqk\cap\Uqah=\Uqah^{\tsat}$.
		For the explicit description of the sets $\Parsetc$ and $\Parsets$  see, \eg \cite[\S 6.8]{appel-vlaar-20}.
		\Omit{
			is however less concise. First, we choose a subset 
			$\aIS^* \subseteq \aIS \setminus X$ by picking a representative for every $\tau$-orbit 
			in $\aIS\setminus X$. Then, 
			\begin{itemize}\itemsep0.25cm
				\item 
				$\Parsetc$ is the set of tuples $\Parc\in(\bsF^\times)^{\aIS}$ such that $\parc{i}=1$ if $i\in X$ and $\parc{i} = \parc{\tau(i)}$ if $\{i , \tau(i) \} \cap \aIdiff = \emptyset$, where 
				\begin{equation}
					\aIdiff = \{ i \in \aIS^* \, | \, \tau(i) \ne i \text{ and } \exists j \in X \cup \{ \tau(i)\} \text{ such that } a_{ij} \ne 0 \}.
				\end{equation}
				\item $\Parsets$ is the set of tuples $\Pars \in \bsF^{\aIS}$ such that $\pars{i}=0$ if $i\in\aIS\setminus\aIns$, where 
				\begin{equation}
					\aIns = \{ i \in \aIS^* \, | \, \tau(i) =i \text{ and } \forall j \in X \, a_{ij} = 0 \}
				\end{equation}
				and, for all $(i,j) \in (\aIns)^{2}$, $a_{ij} \in 2\bbZ$ or $\Pars_j = 0$.
			\end{itemize}
			Note that $\Parsetc$ and $\Parsets$ do not depend upon the choice of $\aIS^*$.
		}
		\item \label{rmk:parameter-operator}
		Following \cite[\S 7.4]{appel-vlaar-20}, we shall regard the tuple $\Parc$ as a diagonal 
		operator on integrable modules. Namely, we fix henceforth a group 
		homomorphism $\parc{}:\aP\to\bbF^\times$ such that
		$\parc{}(\rt{i})\coloneqq\parc{i}$ ($i\in\aIS$). Then, $\parc{}$ acts on any weight vector of
		weight $\lambda$ as the multiplication by $\parc{}(\lambda)$.
		\item 
		By \cite[\S 6.2]{appel-vlaar-20}, it follows that
		$\Uqk\subset\Uqagp$.
		\rmkend
	\end{enumerate}
\end{remarks}

\subsection{Coideal property and monoidal action}\label{ss:coideal-module-cat}

\nc{\monact}{\triangleleft}

It is well--known that $\Uqk$ is a right coideal subalgebra of $\Uqag$, \ie $\Delta(\Uqk)\subset\Uqk\ten\Uqag$, see \cite{kolb-14}. The categorical 
counterpart of this property is described in terms of module categories.
Roughly, a module category (resp. a morphism of module categories)
is the analogue for a monoidal category (resp. a tensor functor) of what
a module (resp. a morphism of modules) is to a ring (resp. a morphism of rings).

We briefly recall these notions following \cite{haring-old}.
Let $\mathcal{A}$ be a monoidal category with tensor product $\otimes$
and unit object $\mathbf{1}$.
A (right) \emph{monoidal action} of $\mathcal{A}$  
on a category $\mathcal{B}$ is a functor $\monact \colon \mathcal{B} \times \mathcal{A} \to \mathcal{B}$ together with an associativity constraint and a unit constraint
 \begin{equation}
 \Phi \colon \monact \circ(\id \times \otimes) \to \monact\circ (\monact \times \id)
 \quad\mbox{and}\quad
 u \colon \monact \circ (\id \times\mathbf{1}) \to \id, 
\end{equation}
satisfying the analogues of the pentagon axiom and the unit axiom for a monoidal category (see \cite[Eqns.\ (6)--(7)]{haring-old}). 

Then, we say that $\mathcal{B}$ is a {\em module category} over $\mathcal{A}$ 
(or, equivalently, that $(\mathcal{B}, \mathcal{A})$ is an {\em action pair}) if it
is equipped with a monoidal action of $\mathcal{A}$.

\begin{example}
Set $\mathcal{A}=\Mod{\Uqag}$ and $\mathcal{B}=\Mod{\Uqk}$.
Since $\Uqk$ is a coideal subalgebra in $\Uqag$, the standard tensor
product induces a functor
\[
\monact\colon\Mod{\Uqk}\times\Mod{\Uqag}\to\Mod{\Uqk}
\]
which is readily verified to be a monoidal action of $\Mod{\Uqag}$
on $\Mod{\Uqk}$.
\rmkend
\end{example}

Let now $(\mathcal{B}, \mathcal{A})$ and $(\mathcal{B}', \mathcal{A}')$ be
two action pairs. A functor $(\mathcal{B}, \mathcal{A})\to(\mathcal{B}', \mathcal{A}')$ is the datum of
\begin{itemize}\itemsep0.25cm
	\item a monoidal functor $F_{A} \colon \mathcal{A} \to \mathcal{A}'$, 
	\item a functor $F_B \colon \mathcal{B} \to \mathcal{B}'$, 
	\item a natural isomorphism $\omega\colon F_B\circ\monact\to\monact\circ(F_B\times F_A)$ 
	satisfying the 
	analogue of the consistency condition for the tensor structure of a
	monoidal functor (see \cite[Eqns.~(8)--(9)]{haring-old}).
%
\end{itemize}
More concisely, we shall say that the functor $F_B$ intertwines through $F_A$ the action
of $\mathcal{A}$ on $\mathcal{B}$ and of $\mathcal{A}'$ on $\mathcal{B}'$.


\section{Trigonometric K-matrices} \label{s:rational} 

\summary{In this section,
	\begin{itemize}
		\item \ref{ss:tau-grading}: $\tau$-invariant shifts
		\item \ref{ss:spectral-k}: formal K-matrices
		\item \ref{ss:rational-k}: trigonometric K-matrices
		\item \ref{ss:unitarity}: unitary K-matrices
	\end{itemize}
}

In this section, we briefly review the construction
of  trigonometric K-matrices on irreducible finite--dimensional
$\UqLg$--modules. The results of this section are due to the first author and
Vlaar \cite{appel-vlaar-22}.


\subsection{$\tau$-invariant shifts}\label{ss:tau-grading}
Henceforth, we replace the homogeneous grading shift defined in \S\ref{ss:spectral-R} 
with a fixed \emph{$\tau$-invariant} grading shift. Namely, we fix a morphism 
$\tshift{z}:\UqLg[z,z^{-1}]\to\UqLg[z,z^{-1}]$ given by
\begin{align}
	\tshift{z}(\Kg{h})=\Kg{h}\, ,\qquad \tshift{z}(\Eg{i})=z^{\shifta{i}}\Eg{i}\, ,\qquad \tshift{z}(\Fg{i})=z^{-\shifta{i}}\Fg{i}\, ,
\end{align}
where $\shifta{}:\aIS\to\bbZ_{\geqslant0}$ is a non-zero $\tau$-invariant function. 
In particular, $\tshift{z}\circ\tau=\tau\circ\tshift{z}$.
Examples of $\tau$-invariant grading shifts are those determined by the characteristic function of any union of $\tau$-orbits
(\eg the principal grading shift).
Clearly, the homogeneous grading shift $\shift{z}$ is $\tau$-invariant if and only if $\tau(0)=0$.
In this case, we say that the QSP subalgebra is $\tau$--{\em restrictable}.\\

Let $V\in\Modfd{\UqLg}$ with action $\pi_V\colon \UqLg\to\End(V)$. 
As before, we denote by $\pi_{V,z}\coloneqq\pi_V\circ\tshift{z}$ the $\tau$-shifted action of 
$\UqLg$ on the modules $\shrep{V}{z}\coloneqq V\ten\bsF(z)$ and 
$\Lshrep{V}{z}\coloneqq V\ten\Lfml{\bsF}{z}$. Note that the $\tau$--analogue of Theorem~\ref{thm:rational-R} holds 
with $\Delta_z$ replaced by the $\tau$-shifted coproduct
$\Delta_z^{\tau}(x)=\id\ten\tshift{z}(\Delta(x))$. Henceforth, we fix a $\tau$-invariant grading shift and drop the superscript $\tau$.

\subsection{Spectral K-matrices}\label{ss:spectral-k}
Fix a QSP subalgebra $\Uqk\subset\Uqag$.
By Remark~\ref{rmk:parameters} (3), every finite--dimensional $\UqLg$--module $V$ is naturally acted upon by $\Uqk$ through the projection
$\Uqagp\to\UqLg$. In the following, we shall consider the restriction of $V$ to $\Uqk$. With a slight abuse of notation, we will denote by the same symbol
an element in $\Uqk$ and its image in $\UqLg$.\\

Set \[\cG\coloneqq\{g\in(\UqLg)^{\scsop{fd},\times}\;\vert\;\Ad(g)(\UqLg)=\UqLg\,, \shift{z}(g)=g\}\,.\]
By \cite[Thm.~4.2.1]{appel-vlaar-22}, $\UqLg$ has a $\cG$-family of universal 
\emph{spectral} K-matrices relative to the QSP subalgebra $\Uqk$. Namely, 
for any $g\in\cG$, there is a canonical formal series $\sKM{}{z}\in\fml{\CUqLg{}}{z}$
such that $\shift{a}(\sKM{}{z})=\sKM{}{az}$ ($a\in\bsF^\times$) and the following properties hold.
\vspace{0.25cm}
\begin{enumerate}\itemsep0.25cm
	\item For any $b\in\Uqk$, 
	\begin{align}\label{eq:spectral-k-intertwiner}
		\sKM{}{z}\cdot\shift{z}(b)=\psi(\shift{1/z}(b))\cdot\sKM{}{z}\, ,
	\end{align}
	where $\psi=\Ad(g)\circ\tsat_{q}^{-1}$\,.
	\item Set $\sRM{}{z}^{\psi} \coloneqq\psi\ten \id(\sRM{}{z}) $. Then,
	\begin{align}\label{eq:spectral-k-coproduct}
		\Delta_{w/z}(\sKM{\psi}{z})=\RM{\psi}^{-1}\cdot 1\ten\sKM{}{w}
		\cdot\sRM{}{zw}^{\psi}\cdot\sKM{}{z}\ten 1\, .
	\end{align}
\end{enumerate}
\vspace{0.25cm}
Moreover, $\sKM{}{z}$ is a solution of Cherednik's reflection equation
\begin{align}\label{eq:spectral-tw-RE}
	\sRM{}{w/z}_{21}^{\psi\psi}\cdot 1\ten\sKM{}{w}
	\cdot&\sRM{}{zw}^{\psi}\cdot\sKM{}{z}\ten 1=\\
	&=\sKM{}{z}\ten 1\cdot\sRM{}{zw}_{21}^{\psi}\cdot 1\ten\sKM{}{w}
	\cdot\sRM{}{w/z}\, ,
\end{align}
where $\sRM{}{z}_{21}^{\psi}\coloneqq\psi\ten\id(\sRM{}{z})_{21}$.

\Omit{
\begin{theorem}\label{thm:spectral-k}
	The quantum loop algebra $\UqLg$ has a $\cG$-family of universal 
	\emph{spectral} K-matrices relative to the QSP subalgebra $\Uqk$. Namely, 
	for any $g\in\cG$, there is a canonical formal series $\sKM{}{z}\in\fml{\CUqLg{}}{z}$
	such that $\shift{a}(\sKM{}{z})=\sKM{}{az}$ ($a\in\bsF^\times$) and the following properties hold.
	\vspace{0.25cm}
	\begin{enumerate}\itemsep0.25cm
		\item For any $b\in\Uqk$, 
		\begin{align}\label{eq:spectral-k-intertwiner}
			\sKM{}{z}\cdot\shift{z}(b)=\psi(\shift{1/z}(b))\cdot\sKM{}{z}\, ,
		\end{align}
		where $\psi=\Ad(g)\circ\tsat_{q}^{-1}$\,.
		\item Set $\sRM{}{z}^{\psi} \coloneqq\psi\ten \id(\sRM{}{z}) $. Then,
		\begin{align}\label{eq:spectral-k-coproduct}
			\Delta_{w/z}(\sKM{\psi}{z})=\RM{\psi}^{-1}\cdot 1\ten\sKM{}{w}
			\cdot\sRM{}{zw}^{\psi}\cdot\sKM{}{z}\ten 1\, .
		\end{align}
	\end{enumerate}
	\vspace{0.25cm}
	Moreover, $\sKM{}{z}$ is a solution of Cherednik's generalised reflection equation
	\begin{align}\label{eq:spectral-tw-RE}
		\sRM{}{w/z}_{21}^{\psi\psi}\cdot 1\ten\sKM{}{w}
		\cdot&\sRM{}{zw}^{\psi}\cdot\sKM{}{z}\ten 1=\\
		&=\sKM{}{z}\ten 1\cdot\sRM{}{zw}_{21}^{\psi}\cdot 1\ten\sKM{}{w}
		\cdot\sRM{}{w/z}\, ,
	\end{align}
	where $\sRM{}{z}_{21}^{\psi}\coloneqq\psi\ten\id(\sRM{}{z})_{21}$.
\end{theorem}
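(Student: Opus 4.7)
The plan is to construct $\sKM{}{z}$ as a product $\sKM{}{z} = \QK{}(z)\cdot\kappa_{g,\tsatq}$, where $\QK{}(z)$ is a spectral \emph{quasi-K-matrix} taking values in (a completion of) $\Uqanm[z,z^{-1}]$ and $\kappa_{g,\tsatq}$ is a Cartan-type correction encoding both the pseudo-involution $\tsatq$ and the element $g\in\cG$. This is the boundary analogue of the quasi-R-matrix approach used for spectral R-matrices, and extends to the affine setting the strategy of \cite{bao-wang-18b, balagovic-kolb-19, appel-vlaar-20} for the non-spectral universal K-matrix.

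First, I would define $\QK{}(z)$ as the unique element (in a suitable completion of $\Uqanm[z,z^{-1}]\ten\bsF[[z^{-1}]]$) satisfying the recursive intertwining equation
\[
\QK{}(z)\cdot\shift{z}(\Bg{i}) = \Theta_i(\shift{1/z}(\Bg{i}))\cdot\QK{}(z)\qquad (i\in\aIS\setminus X),
\]
where $\Theta_i$ is the natural candidate for $\psi$ restricted to the generators \eqref{Bi:def}. Using the coideal property $\Delta(\Uqk)\subset\Uqk\ten\UqLg$, one can set up an inductive system by weight: projecting to each weight component $\QK{}(z)_\mu$ yields an equation whose solvability follows from the non-degeneracy of Lusztig's bilinear form on $\Uqanpm$, combined with compatibility between $\tsatq$ and $\Ad(\wt w_X)$. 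The $\tau$-invariance of the grading shift chosen in \S\ref{ss:tau-grading} is essential here, since otherwise the shifted action $\shift{z}(\Bg{i})$ would not lie in $\Uqk[z,z^{-1}]$. Once $\QK{}(z)$ is constructed, multiplying by $\kappa_{g,\tsatq}$ and verifying compatibility on the Cartan generators $\Kg{h}$ with $h\in(\aQvext)^{\tsat}$ yields \eqref{eq:spectral-k-intertwiner}.

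The coproduct identity \eqref{eq:spectral-k-coproduct} is, I expect, the main obstacle. It is the boundary analogue of the hexagon/quasi-triangular identity for $\sRM{}{z}$, but with two distinct spectral parameters $z$ and $w$ and a twist by $\psi$ on the first tensor factor. I would derive it by expanding $\Delta_{w/z}(\QK{}(z))$ via the fact that $\QK{}(z)$ is \emph{uniquely characterized} by its intertwining property together with a suitable normalization, and then showing that the right-hand side of \eqref{eq:spectral-k-coproduct} satisfies the same intertwiner equation with $\Uqk$ acting through $\Delta_{w/z}$. The reflection equation \eqref{eq:spectral-tw-RE} would then be a formal consequence: applying \eqref{eq:spectral-k-coproduct} twice (in the two possible orderings) and using the Yang-Baxter equation \eqref{eq:z-YBE} for $\sRM{}{z}^{\psi\ten\id}$ and $\sRM{}{z}^{\id\ten\psi}$ on the universal level produces the two sides of \eqref{eq:spectral-tw-RE}, and cancelling the universal R-matrix factors leaves precisely the desired identity.

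A delicate technical point throughout is the passage from $\Uqag$ to $\UqLg$: the quasi-K-matrix lives naturally in a completion with respect to finite-dimensional modules, and one must ensure that all identities descend well under the quotient $\Kg{c}\mapsto 1$. The cosimplicial algebra structure on $\CUqLg{}$ recalled in \S\ref{ss:QL-representations} provides the correct framework for making sense of expressions such as $\Delta_{w/z}(\sKM{}{z})$ and $\sRM{}{zw}^{\psi}$, and all the coproduct manipulations should be carried out within it.
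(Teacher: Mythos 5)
The paper does not prove this statement: it records it as a known result, citing \cite[Thm.~4.2.1]{appel-vlaar-22}, with a follow-up remark explaining that the argument relies on the Kac--Moody-level universal K-matrix of \cite[Thm.~8.11--8.12]{appel-vlaar-20}, building on Bao--Wang and Balagovi\'c--Kolb. Your sketch therefore reconstructs the strategy of those references rather than anything proved internally here, and it is the right general outline. It differs from the actual route in one material way: in \cite{appel-vlaar-22} one does not solve a $z$-dependent intertwining recursion for a ``spectral quasi-K-matrix''. Instead the universal K-matrix for $\Uqag$ is built once, \emph{without} a spectral parameter, in the form $\Upsilon\cdot\kappa_{g,\tsatq}$ following \cite{appel-vlaar-20}; the spectral version is then obtained formally by applying the $\tau$-invariant grading shift $\shift{z}$ and descending to $\UqLg$. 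The homogeneity $\shift{a}(\sKM{}{z})=\sKM{}{az}$, the intertwining property, the coproduct identity, and the reflection equation \eqref{eq:spectral-tw-RE} are then \emph{inherited} from their non-spectral counterparts by pushing the grading shift through, rather than re-derived by the uniqueness-of-intertwiner argument you propose. Your variant could be made to work, but it duplicates the hard part (convergence of the weight-by-weight recursion in the Kac--Moody completion) and leaves its solvability --- which you attribute to non-degeneracy of Lusztig's form --- as precisely the gap that \cite[\S 8]{appel-vlaar-20} fills.

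One local slip: with the conventions of this paper, the quasi-K-matrix lives in a completion of the \emph{positive} nilpotent part $\Uqanp$, not $\Uqanm$, since the coideal generators $\Bg{i}=F_i+\parc{i}\tsatq(F_i)+\pars{i}K_i^{-1}$ deform $F_i$ by positive-weight elements and the quasi-K-matrix is the object that conjugates that deformation away.
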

}

\begin{remark}
	The result above relies on the construction of universal K--matrices for quantum Kac--Moody
	symmetric pairs given in \cite[Thm.~8.11-8.12]{appel-vlaar-20}, building on the work of 
	Bao-Wang \cite{bao-wang-18b} and Balagovi\'c-Kolb \cite{balagovic-kolb-19}. 
	
	The equation \eqref{eq:spectral-tw-RE} was first introduced by Cherednik in \cite[Eq.~(4.14)]{cherednik-92}.
	\rmkend
\end{remark}

\subsection{Trigonometric K-matrices}\label{ss:rational-k}
A finite--dimensional $\UqLg$--module $V$ is
\begin{enumerate}\itemsep0.25cm
	\item\label{cond:qsp-irr} \emph{QSP irreducible} if it is irreducible as a module over ${\Uqk}$\,;
	\item\label{cond:gen-qsp-irr} \emph{generically QSP irreducible} if $\Lfml{V}{z}$ is irreducible as a module over $\Lfml{\Uqk}{z}$.\footnote{
		Note that $\Lfml{V}{z}$ is equipped with the shifted action (see \S\ref{ss:tau-grading}).}
\end{enumerate}
Such condition is the natural counterpart of the 
generic irreducibility of the tensor product and yields the following QSP analogue of Theorem \ref{thm:rational-R}, see \cite[Thm.~5.2.1]{appel-vlaar-22}.

\begin{theorem}[\cite{appel-vlaar-22}]\label{thm:rational-k}\hfill 
	\begin{enumerate}\itemsep0.25cm
		\item Every finite-dimensional irreducible $\UqLg$-module is generically QSP irreducible.
		\item
		Let $V,W\in\Modfd{\UqLg}$ be irreducible modules. 
		There is a formal Laurent series $g_{V}(z)\in\Lfml{\bsF}{z}$ and a polynomial
		non--vanishing operator $\trKM{V}{z}\in\End(V)[z]$ (unique up to a scalar) such that
		\begin{align}
			\sKM{V}{z}= g_{V}(z)\cdot\trKM{V}{z}\, ,
		\end{align}
		where $\sKM{V}{z}\colon \Lfml{V}{z}\to\Lfml{\psi^*(V)}{z^{-1}}$ is the formal  QSP intertwiner given by the action of $\sKM{}{z}$ on $V$.
		\item 
		The operators $\trKM{V}{w}$ and $\trKM{W}{z}$ satisfy Cherednik's reflection equation in $\End(V\ten W)(z,w)$ 
		\begin{align}\label{eq:rational-tw-RE}
			\begin{split}
				\rRM{\psi^*(W)\, \psi^*(V)}{w/z}_{21}\cdot 1\ten\trKM{W}{w}
				\cdot\rRM{\psi^*(V)W}{zw}\cdot\trKM{V}{z}\ten 1=\\
				=\trKM{V}{z}\ten 1\cdot\rRM{\psi^*(W)\, V}{zw}_{21}\cdot 1\ten\trKM{ W}{w}
				\cdot\rRM{VW}{w/z}\, ,
			\end{split}
		\end{align}
		where $\rRM{VW}{z}$ is the trigonometric R-matrix (cf.\ \S\ref{ss:rational-R}),
		and
		$\rRM{WV}{z}_{21}\coloneqq(1\, 2)\circ\rRM{WV}{z}\circ (1\, 2)$.
	\end{enumerate}
\end{theorem}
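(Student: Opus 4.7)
My plan is to establish the three statements in sequence, using (1) as the foundational ingredient from which (2) and (3) follow more formally.

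For (1), I would adapt the generic irreducibility strategy used in the tensor product setting (compare \cite[\S 4.2]{kazhdan-soibelman-95}, \cite[Thm.~3]{chari-02}) to the one-sided boundary context. Given a putative nonzero $\Lfml{\Uqk}{z}$-submodule $U \subseteq \Lfml{V}{z}$, the idea is to analyze the leading and trailing $z$-orders of the $\Uqk$-action on $U$: after the $\tau$-shift $\tshift{z}$, each generator $\Bg{i}$ (for $i \in \aIS \setminus X$ with $\shifta{i} > 0$) splits into pieces of distinct $z$-degrees, so that its extremal coefficients land, respectively, on a multiple of $\Fg{i}$ and on $\parc{i}\cdot \tsatq(\Fg{i})$. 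Combining these with the $z$-independent action of $\Uqag_X$ and of the fixed Cartan, one generates enough of $\UqLg$ to force $U = \Lfml{V}{z}$, by appeal to the known irreducibility of $V$ over $\UqLg$.

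For (2), generic QSP irreducibility and Schur's lemma imply that the space of $\Lfml{\Uqk}{z}$-intertwiners $\Lfml{V}{z} \to \Lfml{\psi^*(V)}{z^{-1}}$ is at most one-dimensional over $\Lfml{\bsF}{z}$. Since by \eqref{eq:spectral-k-intertwiner} the formal series $\sKM{V}{z}$ is a nonzero such intertwiner, I would pin down the normalization $g_V(z) \in \Lfml{\bsF}{z}$ by evaluating a matrix coefficient of $\sKM{V}{z}$ on a suitable reference vector (e.g.\ a highest weight vector with respect to $\Uqah^{\tsat}$). Dividing yields a rational operator $\trKM{V}{z}$; its polynomial and non-vanishing nature follows from the fact that its poles and zeros are controlled by the finitely many specializations of $z$ at which $\Lfml{V}{z}$ fails to be QSP irreducible, and these can be absorbed into $g_V(z)$.

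For (3), the reflection equation \eqref{eq:rational-tw-RE} follows from the universal spectral reflection equation \eqref{eq:spectral-tw-RE} by applying both sides in $\End(V \otimes W)$, substituting the normalizations $\sRM{}{z} = f_{VW}(z)\rRM{VW}{z}$ and $\sKM{}{z}|_V = g_V(z)\trKM{V}{z}$, and observing that the scalar prefactors produced on the two sides agree (this uses the spectral covariance $\shift{a}(\sKM{}{z}) = \sKM{}{az}$ and the analogous property of the R-matrix from Theorem~\ref{thm:rational-R}) and therefore cancel, leaving the desired identity among the polynomial and rational operators. The main obstacle is (1): in the tensor-product setting, the full Cartan $\Uqh$ acts semisimply on each factor, providing considerable rigidity for intertwiners; here one only has semisimplicity under the $\tsat$-fixed Cartan, which has strictly smaller rank, and the twist $\psi$ exchanges the two sides of the K-matrix. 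Compensating for this loss of rigidity by extracting sufficient information from the extremal $z$-orders of the $\Uqk$-action, and ensuring that one actually recovers enough $\UqLg$-action rather than merely its $\tsat$-fixed subalgebra, is the most delicate step.
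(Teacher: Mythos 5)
The paper does not prove this theorem; it imports it verbatim from \cite{appel-vlaar-22} (Thm.~5.2.1), and the surrounding text only signals that part~(1) plays the role that generic irreducibility of tensor products plays for the trigonometric R-matrix. So there is no in-paper proof for your attempt to match against, and the most one can do is check that your reconstruction is consistent with the cited result.

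Your overall architecture --- (1) as the analytic crux, (2) by Schur's lemma over $\Lfml{\bsF}{z}$ once (1) is in hand, (3) by descending the universal reflection equation \eqref{eq:spectral-tw-RE} along the normalizations $\sRM{}{z}=f_{VW}(z)\rRM{VW}{z}$ and $\sKM{}{z}\vert_V=g_V(z)\trKM{V}{z}$ and noting that the scalar prefactors cancel --- is the natural one and matches how the corresponding R-matrix statement (Theorem~\ref{thm:rational-R}) is discussed in the paper. Parts~(2) and~(3) are essentially automatic once (1) is known, modulo the small care needed in (2) that the two target modules $\Lfml{V}{z}$ and $\Lfml{\psi^*(V)}{z^{-1}}$ are irreducible and admit a nonzero intertwiner, and in (3) that the two sides produce the \emph{same} scalar (which you correctly attribute to the covariance $\shift{a}(\sKM{}{z})=\sKM{}{az}$ and the analogous property of the spectral R-matrix).

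The genuine gap is in your treatment of part~(1). You describe extracting the ``extremal $z$-orders'' of $\tshift{z}(\Bg{i})$ and claim they yield, separately, $\Fg{i}$ and $\parc{i}\,\tsatq(\Fg{i})$, and that these ``generate enough of $\UqLg$''. Two things are left unaddressed. First, the $z$-degree split of $\tshift{z}(\Bg{i})$ requires that $\Fg{i}$ and $\tsatq(\Fg{i})=\adbt{\tsat}\circ\omega\circ\tau(\Fg{i})$ land in root spaces of \emph{distinct} homogeneous $z$-degree under the $\tau$-invariant shift $\chi$; this is true precisely because $\chi$ is $\tau$-invariant and $\tsatq(\Fg{i})$ has weight $-\tsat(\alpha_i)$, but the cases $\chi_i=0$ (generators that are not loop-graded away) need separate handling, as does the degree-zero term $\pars{i}\Kg{i}^{-1}$. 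Second, and more seriously, extracting the extremal coefficients gives you the \emph{action} of a certain collection of elements of $\UqLg$ on the submodule $U\subseteq\Lfml{V}{z}$, but you must verify that these elements, together with $\Uqag_X$ and $\Uqah^{\tsat}$, generate the image of $\UqLg$ in $\End(V)$; this is not automatic, since $\Uqah^{\tsat}$ has strictly smaller rank than $\Uqah$ and one cannot appeal to full weight-space decompositions. You flag this yourself as the hard point, but a sketch that stops at ``one generates enough of $\UqLg$'' has not yet closed the argument; this is where the real content of \cite[Thm.~5.2.1]{appel-vlaar-22} lies and where a careful case analysis (including the split, quasi-split, and nonsplit subcases, and the boundary behaviour when $\chi_i=0$) is unavoidable.
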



\subsection{Unitary K-matrices}\label{ss:unitarity}
Let $V,W\in\Modfd{\UqLg}$ be irreducible modules. By Theorem~\ref{thm:rational-R}, 
the trigonometric R-matrix $\rRM{VW}{z}$ satisfies the unitarity condition 
$\rRM{VW}{z}^{-1}=(1\, 2)\circ\rRM{WV}{z^{-1}}\circ(1\, 2)$. 
The analogue result for
trigonometric K-matrices requires an additional assumption to hold, see  
\cite[Prop.~5.4.1]{appel-vlaar-22}.

\begin{theorem}[\cite{appel-vlaar-22}]\label{thm:unitary-k}
Let $V\in\Modfd{\UqLg}$ be a $\psi$-involutive irreducible module,  
\ie such that $(\psi^2)^*(V)\simeq V$. The trigonometric K-matrices 
$\trKM{V}{z}$ and $\trKM{\psi^*(V)}{z}$ from Theorem~\ref{thm:rational-k} give rise through normalization to 
two non-vanishing QSP intertwiners
\[
\rKM{V}{z}: V(z)\to\psi^*(V)(z^{-1})\quad\mbox{and}\quad
\rKM{\psi^*(V)}{z}: \psi^*(V)(z)\to V(z^{-1})
\]
satisfying the unitarity condition
\begin{align}\label{eq:unitarity-k}
	\rKM{V}{z}^{-1}=\rKM{\psi^*(V)}{z^{-1}}\,.
\end{align}
Moreover, if $V(\zeta)$ is QSP irreducible for some $\zeta\in\bsF^\times$, 
then $\rKM{V}{\zeta}$ is well--defined and invertible.
\end{theorem}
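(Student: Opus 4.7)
The plan is to construct $\rKM{V}{z}$ as a scalar renormalization of the polynomial operator $\trKM{V}{z}$ from Theorem~\ref{thm:rational-k} and to enforce the unitarity relation via Schur's lemma applied to a suitable composition. First, I would examine the composition
\[
\sKM{\psi^*(V)}{z^{-1}} \circ \sKM{V}{z}\colon \Lfml{V}{z} \to \Lfml{(\psi^2)^*(V)}{z} \isoto \Lfml{V}{z},
\]
where the final isomorphism is provided by the $\psi$-involutivity assumption $(\psi^2)^*(V)\simeq V$. By the intertwining property \eqref{eq:spectral-k-intertwiner}, this composition is a QSP intertwiner. Since $\Lfml{V}{z}$ is irreducible as an $\Lfml{\Uqk}{z}$-module by Theorem~\ref{thm:rational-k}(1), Schur's lemma over $\Lfml{\bsF}{z}$ forces the composition to be a scalar. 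Factoring out $g_V(z)g_{\psi^*(V)}(z^{-1})$ via Theorem~\ref{thm:rational-k}(2), this descends to an identity
\[
\trKM{\psi^*(V)}{z^{-1}} \circ \trKM{V}{z} = \Psi_V(z)\cdot \id_V,
\]
where $\Psi_V(z)\in \bsF[z,z^{-1}]$ is a non-zero Laurent polynomial (non-zero because both factors are non-vanishing as polynomial operators).

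Next, I would establish a factorization $\Psi_V(z) = h(z)\,h(z^{-1})$ for some $h(z)\in\bsF[z,z^{-1}]$. Running the same argument with $V$ replaced by $\psi^*(V)$ and $z$ by $z^{-1}$, one obtains $\trKM{V}{z} \circ \trKM{\psi^*(V)}{z^{-1}} = \Psi_{\psi^*(V)}(z^{-1})\cdot\id$, and comparing traces yields the symmetry $\Psi_V(z)=\Psi_{\psi^*(V)}(z^{-1})$. Over the algebraically closed field $\cor=\overline{\C(q)}$, the roots of $\Psi_V$ can then be paired with their reciprocals inherited from the symmetric partner, producing the required factorization. Setting
\[
\rKM{V}{z}\coloneqq h(z)^{-1}\trKM{V}{z} \aand \rKM{\psi^*(V)}{z}\coloneqq h(z^{-1})^{-1}\trKM{\psi^*(V)}{z},
\]
the unitarity relation \eqref{eq:unitarity-k} follows tautologically from the construction, and the QSP intertwiner property descends from that of $\sKM{V}{z}$.

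For the final claim, suppose $V(\zeta)$ is QSP irreducible. Then $\rKM{V}{\zeta}$, if well-defined, is a QSP intertwiner between irreducible $\Uqk$-modules of the same dimension, so by Schur's lemma it is either zero or an isomorphism. If $\trKM{V}{\zeta}$ were zero, the identity $\trKM{\psi^*(V)}{z^{-1}}\trKM{V}{z} = \Psi_V(z)\id$ would force $\Psi_V(\zeta)=0$. However, QSP irreducibility is a Zariski-open condition, so on a punctured neighborhood of $\zeta$ the operator $\rKM{V}{z}$ is invertible and thus $h(z)\neq 0$ on a dense subset; combined with polynomiality of $\trKM{V}{z}$, this forces $\trKM{V}{\zeta}\neq 0$ and $h(\zeta)\neq 0$, so that $\rKM{V}{\zeta}$ is well-defined and invertible.

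The main obstacle I expect is the factorization step $\Psi_V(z) = h(z)\,h(z^{-1})$: it genuinely depends on the hypothesis $(\psi^2)^*(V)\simeq V$ to produce the reciprocal symmetry of the roots, and on the algebraic closedness of $\cor$ to split the resulting quadratic factors. A careful bookkeeping of this factorization, together with the verification that the chosen $h$ has no spurious zeros at QSP-irreducible points, is the technical heart of the argument.
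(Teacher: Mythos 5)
The theorem you are proving is not proved in this paper but cited from \cite{appel-vlaar-22}, so the comparison is against the natural argument rather than a displayed proof. Your overall strategy (apply Schur's lemma to the composition, extract a scalar, renormalize) is the right one. However, the factorization step is genuinely flawed. The symmetry you derive, $\Psi_V(z)=\Psi_{\psi^*(V)}(z^{-1})$, relates \emph{two different} scalar functions attached to $V$ and to $\psi^*(V)$; it does not give a self-reciprocal structure $\Psi_V(z)=\Psi_V(z^{-1})$ on $\Psi_V$ itself, which is what the factorization $\Psi_V(z)=h(z)h(z^{-1})$ would require (and which would force the roots of $\Psi_V$ to pair up into $\{r,r^{-1}\}$). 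That self-symmetry holds only under the extra hypothesis $\psi^*(V)\simeq V$, not under the weaker assumption $(\psi^2)^*(V)\simeq V$. Your normalization formulas are also internally inconsistent: with $\rKM{V}{z}=h(z)^{-1}\trKM{V}{z}$ and $\rKM{\psi^*(V)}{z}=h(z^{-1})^{-1}\trKM{\psi^*(V)}{z}$ one computes $\rKM{\psi^*(V)}{z^{-1}}\circ\rKM{V}{z}=h(z)^{-2}\Psi_V(z)\id$, so unitarity would need $\Psi_V(z)=h(z)^2$, not $h(z)h(z^{-1})$.

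The good news is that no special factorization is needed at all. Any decomposition $\Psi_V(z)=a(z)\,b(z^{-1})$ with $a,b\in\bsF[z,z^{-1}]$ (for instance $a=\Psi_V$, $b=1$) yields unitary, non-vanishing QSP intertwiners upon setting $\rKM{V}{z}=a(z)^{-1}\trKM{V}{z}$ and $\rKM{\psi^*(V)}{z}=b(z)^{-1}\trKM{\psi^*(V)}{z}$, and the relation $\Psi_V(z)=\Psi_{\psi^*(V)}(z^{-1})$ then guarantees that the two unitarity identities (for $V$ and for $\psi^*(V)$) hold simultaneously. This freedom is precisely what the paper's remark after Theorem~\ref{thm:unitary-k} flags as the non-canonicity of the normalization. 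For the final claim, the phrase ``if $\trKM{V}{\zeta}$ were zero'' cannot occur since $\trKM{V}{z}$ is non-vanishing by Theorem~\ref{thm:rational-k}(2). The clean argument is: $\trKM{V}{\zeta}\ne 0$ is a QSP intertwiner from the irreducible $V(\zeta)$, hence injective, hence invertible by dimension count; therefore $\Psi_V(\zeta)\ne 0$, which forces $a(\zeta)\ne 0$ for any Laurent-polynomial factor $a$ of $\Psi_V$, so $\rKM{V}{\zeta}$ is well-defined and invertible.
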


\begin{remark}
	In contrast with the case of the R-matrix, the normalization yielding the unitary K-matrix $\rKM{V}{z}$ is non-canonical in general, since it follows from the normalization of the operator $\trKM{\psi^*(V)}{z^{-1}}\circ\trKM{V}{z}$. 
	In certain cases, however, there is a canonical normalization. We refer the
	reader to \cite[\S 5.4--5.6]{appel-vlaar-22}.
	\rmkend
\end{remark}


\subsection{Trigonometric  K-matrices for Kirillov--Reshetikhin modules}\label{ss:K-mx-KR}
Kirillov--Reshetikhin modules are minimal affinizations of 
irreducible $U_q\g$--modules whose highest weight is a multiple of 
a fundamental weight.
More precisely, for any $i\in I$, $k\in\bbZ_{> 0}$, and $a\in\bbC^\times$, 
the Kirillov-Reshetikhin module $W_{k,a}^{(i)}$ is the unique irreducible $\UqLg$-module whose
Drinfeld polynomials are all trivial except for the node $i$, where the roots are given by a $q_i$-string of length $k$ starting at $a$, see, \eg~\cite{kirillov-reshetikhin-87, chari-pressley}.
The \emph{fundamental representations} of $\UqLg$ are the simplest non--trivial Kirillov--Reshetikhin modules. Specifically, we set $\sfV_{\omega_i}\coloneqq W_{1,1}^{(i)}$.\\

Let $\eta_0\in\Aut(\widehat{A})$ be the diagram automorphism which fixes the affine node and acts on any other node as the opposition
involution $\oi_I$, see \S\ref{ss:gsat}. From the classification of generalized Satake diagrams of affine type in \cite[App.~A, Tables 5, 6 and 7]{regelskis-vlaar-21} it follows that a QSP subalgebra is $\tau$-restrictable (\ie $\tau$ fixes the affine node) if and only if $\tau$ is either
the identity or $\eta_0$ (except in type $\sfD^{(1)}_n$ with $n$ even, where $\eta_0 = \id$ but there exist nontrivial involutive diagram automorphisms fixing 0).
By \cite[Thm.~7.8.1]{appel-vlaar-22}, the following holds.

\begin{theorem}[\cite{appel-vlaar-22}]\label{thm:kr-k}
Let $\Uqk\subset\UqLg$ be a $\tau$--restrictable QSP subalgebra.
Let $W$ be a Kirillov-Reshetikhin $\UqLg$-module. 
\begin{enumerate}[leftmargin=2em]\itemsep0.25cm
				\item  There is a unique QSP intertwiner (up to a scalar multiple)
				\begin{equation} 
						\rKM{W}{z}:\shrep{W}{z}\to(\eta_0\tau)^*\shrep{(W)}{z^{-1}}
				\end{equation} 
				up to a scalar multiple and a unique shift in $W$.
				Moreover, $\rKM{W}{z}$ is a solution of the {\em diagrammatic} reflection equation 
					\begin{align}\label{eq:bala-kolb-re}
					\rRM{WV}{\tfrac{w}{z}}_{21}&\cdot \id\ten\rKM{W}{w}
					\cdot{\bf R}_{(\eta_0\tau)^*(V)\,W}(zw)\cdot\rKM{V}{z}\ten \id=\\
					&=\rKM{V}{z}\ten \id\cdot{\bf R}_{(\eta_0\tau)^*(W)\,V}(zw)_{21}\cdot \id\ten\rKM{W}{w}
					\cdot\rRM{VW}{\tfrac{w}{z}}\, .
				\end{align}
				\item If $\tau=\eta_0$, 
				there is a unique QSP intertwiner 
				\begin{equation} 
					\rKM{W}{z}:\shrep{W}{z}\to\shrep{W}{z^{-1}}
				\end{equation} 
				up to a scalar multiple and a unique shift in $W$.
				Moreover, $\rKM{W}{z}$ is a solution of the standard reflection equation
				\begin{equation}\label{eq:rational-original-RE}
					\begin{aligned}
						\rRM{WV}{\tfrac{w}{z}}_{21} & \cdot \id_V \ten\rKM{W}{w} \cdot \rRM{VW}{zw} \cdot \rKM{V}{z} \ten \id_W =\\
						&=\rKM{V}{z} \ten \id_W \cdot \rRM{WV}{zw}_{21} \cdot \id_V \ten \rKM{W}{w} \cdot \rRM{VW}{\tfrac{w}{z}}
					\end{aligned}
				\end{equation}
				\item If $\tau=\id$,  there is a unique QSP intertwiner
				\begin{equation} 
					\rKM{W}{z}:\shrep{W}{z}\to\shrep{W^*}{z^{-1}}
				\end{equation} 
				up to a scalar multiple and a unique shift in $W$.
				Moreover, under the identification of $W$ and $W^*$ as vector spaces, $\rKM{W}{z}$
				gives rise to a solution of the transposed reflection equation 
				\begin{equation}\label{eq:rational-transposed-RE}
					\begin{aligned}
						\rRM{WV}{\tfrac{w}{z}}_{21}^{{\sf t}_V {\sf t}_W}& \cdot \id_V \ten \rKM{W}{w} \cdot ( \rRM{VW}{zw}^{-1})^{\sf t_V} \cdot \rKM{V}{z} \ten \id_W = \\*
						\qquad &=  \rKM{V}{z} \ten \id_W \cdot (\rRM{WV}{zw}^{-1}_{21})^{\sf t_W} \cdot \id_V \ten \rKM{W}{w}\cdot\rRM{VW}{\tfrac{w}{z}}\,.
					\end{aligned}
				\end{equation}
\end{enumerate}
\end{theorem}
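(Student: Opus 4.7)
The plan is to derive Theorem~\ref{thm:kr-k} as a specialization of the general existence and uniqueness results for trigonometric K-matrices (Theorems~\ref{thm:rational-k} and~\ref{thm:unitary-k}) to the family of Kirillov--Reshetikhin modules, and then to recognize the target module $\psi^*(W)$ in a form adapted to each of the three cases.

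The first step is to identify $\psi^*(W)$ for $W$ a KR module. Since $\psi = \Ad(g) \circ \tsatq^{-1}$ and the inner part $\Ad(g) \circ \Ad(\bt{\tsat})^{-1}$ preserves the isomorphism class of any irreducible module, one has $\psi^*(W) \simeq \tau^* \omega^*(W)$. The action of the quantum Chevalley involution $\omega$ on finite-dimensional irreducible $\UqLg$-modules can be read off from the Drinfeld polynomial description: if $W = W_{k,a}^{(i)}$, then $\omega^*(W)$ is a KR module supported at node $\oi(i)$ with the spectral parameter inverted and rescaled by a constant $c$ determined by $g$ and $\bt{\tsat}$. Because $\Uqk$ is $\tau$-restrictable, $\eta_0$ is precisely the affine diagram automorphism that extends $\oi$ while fixing the affine node, so one obtains $\psi^*(W) \simeq (\eta_0\tau)^*(W)(c)$ for a unique $c \in \bsF^\times$; this $c$ is the ``unique shift in $W$'' appearing in the statement.

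Next, one verifies $\psi$-involutivity of $W$: since $\tsat$ is an involution of $\ag$, the square $\psi^2$ differs from the identity by an inner automorphism, so $(\psi^2)^*(W) \simeq W$. Theorem~\ref{thm:unitary-k} therefore applies and produces a non-vanishing QSP intertwiner $\rKM{W}{z}\colon \shrep{W}{z} \to (\eta_0\tau)^*\shrep{W}{z^{-1}}$. Uniqueness up to scalar is an application of Schur's lemma: generic QSP-irreducibility (Theorem~\ref{thm:rational-k}(1)) implies that the space of QSP intertwiners between two irreducible modules over $\Lfml{\Uqk}{z}$ is at most one-dimensional, which pins down $\rKM{W}{z}$ up to a scalar multiple and the shift identified above. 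The diagrammatic reflection equation~\eqref{eq:bala-kolb-re} in part~(1) is then an immediate specialization of the universal reflection equation~\eqref{eq:rational-tw-RE} after substituting $\psi^*(V) = (\eta_0\tau)^*(V)$.

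For part~(2), the assumption $\tau = \eta_0$ yields $(\eta_0\tau)^* = \id$, so that the target simplifies to $\shrep{W}{z^{-1}}$ and the twisted R-matrices in~\eqref{eq:rational-tw-RE} become ordinary R-matrices, reducing~\eqref{eq:bala-kolb-re} to~\eqref{eq:rational-original-RE}. For part~(3), the assumption $\tau = \id$ gives $(\eta_0\tau)^* = \eta_0^*$, and for KR modules one has the standard identification $\eta_0^*\shrep{W}{z^{-1}} \simeq \shrep{W^*}{z^{-1}}$ as $\UqLg$-modules (again visible on Drinfeld polynomials, up to an absorbable spectral shift); under this identification, the appearance of $\psi^*$ in the arguments of the R-matrices in~\eqref{eq:rational-tw-RE} translates into partial transposes, producing the transposed reflection equation~\eqref{eq:rational-transposed-RE}. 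The main obstacle is the precise identification of $\psi^*(W)$ with $(\eta_0\tau)^*(W)$ shifted by a unique constant: this requires careful bookkeeping of the interplay between the inner twist by $\bt{\tsat}$, the parameter element $g \in \cG$, and the Drinfeld-polynomial effect of $\omega$; all remaining steps are direct manipulations of~\eqref{eq:rational-tw-RE}.
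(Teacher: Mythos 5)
Your proposal follows the same route as the paper: it derives the theorem as a specialization of Theorems~\ref{thm:rational-k} and~\ref{thm:unitary-k}, using the Chari identification $\omega^*(W_{k,a}^{(i)})\simeq\eta_0^*(W_{k,a}^{(i)})(a^{-2}q_i^{-2(k-1)})$ to recognize $\psi^*(W)$ as $(\eta_0\tau)^*(W)$ up to a spectral shift, and then the identification $\eta_0^*(V)\simeq V^*(q^c)$ for part (3). The only cosmetic differences are that you argue the inner part of a general admissible $\psi$ can be absorbed, where the paper simply fixes $\psi=\omega\circ\tau$ from the start, and that the existence of the QSP intertwiner is really furnished by Theorem~\ref{thm:rational-k}(2) (with Theorem~\ref{thm:unitary-k} supplying unitarity) rather than by Theorem~\ref{thm:unitary-k} alone; neither affects the argument.
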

	
\begin{remark}
The result appears as a direct consequence of Theorem~\ref{thm:rational-k} (3)
in the case $\psi=\omega\circ\tau$. More precisely, one first observes that,
by \cite[Eqs.~(2.20) and (2.21)]{chari-02},  if $W=W_{k,a}^{(i)}$, then 
there is an isomorphism of $\UqLg$-modules
\begin{equation}\label{eq:KR}
	\omega^*(W)\simeq \eta_0^*(W)(a^{-2}q_i^{-2(k-1)})\,.
\end{equation}
This yields (1) and (2). In order to prove (3), one then observes that, 
by \cite[Eqs.~(2.20) and (2.21)]{chari-02}, 
there exists an integer $c\in\bbZ$ depending only on $\g$ 
such that, for any irreducible finite-dimensional $\UqLg$-module $V$, 
one has $\eta_0^*(V)\simeq V^*(q^c)$.
\rmkend
\end{remark}	
	
	
	\section{$o$KLR algebras} \label{s:oKLR}
	
	\summary{In this section:
		\begin{itemize}
			\item \ref{ss:quiver-involution}: quiver with an involution
			\item \ref{ss:orientifold-KLR}: orientifold KLR (and standard KLR)
			\item \ref{ss:polynomial-rep}: polynomial representation
			\item \ref{ss:convolution-product}: convolution product
		\end{itemize}
	}
	
	
	In this section, we review the definition and the basic properties of KLR and orientifold KLR ($o$KLR) algebras, 
	in particular their polynomial representation and their convolution product.
	
	\subsection{Quiver with an involution}\label{ss:quiver-involution}

Let $\sym_n = \langle s_1, \cdots, s_{n-1} \rangle$ denote the symmetric group on $n$ letters, and $\weyl_n = \langle s_0, s_1, \cdots, s_{n-1} \rangle$ the Weyl group of type $\mathsf{B}_{n}$, \ie $(\Z/2\Z)^n\rtimes\sym_n$. 
	
	Let $\Gamma = (J,\Omega)$ be a quiver with vertices $J$ and arrows $\Omega$. We assume that $\Gamma$ does not have loops. 
	Given an arrow $a \in \Omega$, let $s(a)$ be its source, and $t(a)$ its target. 
	If $i, j \in J$, let $\Omega_{ij} \subset \Omega$ be the subset of arrows $a$ such that $s(a) = i$ and $t(a) = j$. Let $a_{ij} = |\Omega_{ij}|$ and abbreviate $\dvec a_{ij} = a_{ij}+a_{ji}$. We assume that $a_{ij} < \infty$ for all $i,j \in J$. 
	
	\begin{definition}  \label{def: contr-inv}
		A (contravariant) \emph{involution} of the quiver $\Gamma$ is a pair of involutions $\theta \colon J \to J$ and $\theta \colon \Omega \to \Omega$ such that: 
		\begin{enumerate}
			\item $s(\theta(a)) = \theta(t(a))$ and $t(\theta(a)) = \theta(s(a))$ for all $a \in \Omega$, 
			\item if $t(a) = \theta(s(a))$ then $a = \theta(a)$. 
		\end{enumerate} 
	\end{definition} 
	
	We denote by $J^\theta$ the subset of fixed points of $\theta$. 
	Let $\N[J]$ be the commutative semigroup freely generated by $J$. We call elements of $\N[J]$ \emph{dimension vectors}. Given a dimension vector $\beta = \sum_{i \in J} \beta(i) \cdot i$,  set $\Norm{\beta} = \sum_{i \in J} \beta(i)$. 
	
	We call a sequence $\nu = \nu_1 \cdots \nu_n \in J^{n}$ a \emph{composition} of $\beta$ of length $\ell(\nu) = n$ if $\norm{\nu} = \sum_{k=1}^n \nu_k = \beta$. We also set $\Norm{\nu}=n$. 
Let $J^\beta$ denote the set of all compositions of~$\beta$. There is a left action 
	of $\mathfrak{S}_n$ on $J^n$ by permutations
	\begin{equation} \label{eq: sym act seq}
		s_k \cdot \nu_1 \cdots \nu_n = \nu_1 \cdots \nu_{k+1} \nu_k \cdots \nu_n \quad (1 \leqslant k \leqslant n-1), 
	\end{equation} 
	whose orbits are the sets $J^\beta$ for all $\beta$ with $\Norm{\beta} = n$. 
	
	Let $\wor = \bigcup_{\beta \in \N[J]} J^\beta$ be the set of compositions of all dimension vectors. 
	We consider $\wor$ as a monoid with respect to concatenation: $\nu \mu = \nu_1\cdots \nu_{\ell{\nu}}\mu_1\cdots \mu_{\ell{\mu}}$, with the zero dimension vector composition as the identity. 
	
	\noeqref{eq: the theta doubling}
	
	The involution $\theta$ induces an involution $\theta \colon \N[J] \to \N[J]$. We call dimension vectors in $\N[J]^\theta$ \emph{self-dual}. 
	We will always assume, for any $\beta \in \N[J]^\theta$, that if $i \in J^\theta$ then $\beta(i)$ is even. 
	Set $\Norm{\beta}_{\theta} = \Norm{\beta}/2$ and  
	\begin{equation} \label{eq: the theta doubling}
  {}^\theta(-) \colon \N[J] \to \N[J]^\theta, \quad \beta \mapsto {}^\theta \beta = \beta + \theta(\beta).
	\end{equation}
	We call a sequence $\nu = \nu_1 \cdots \nu_n \in J^n$ an \emph{isotropic composition} of $\beta$ if $\ttt\norm{\nu}=\sum_{k=1}^n {}^\theta\nu_i = \beta$. 
Let ${}^\theta J^\beta$ denote the set of all isotropic compositions of $\beta$. 
	There is a left action 
	of $\weyl_n$ on $J^n$ extending \eqref{eq: sym act seq}, given by 
	\[ s_0 \cdot \nu_1 \cdots \nu_n = \theta(\nu_1) \nu_2 \cdots \nu_n,\]
	whose orbits are the sets ${}^\theta J^\beta$ for all self-dual $\beta$ with $\Norm{\beta}_\theta = n$. 
	Let $\twor = \bigcup_{\beta \in \N[J]^\theta} \comp$ be the set of all isotropic compositions of all self-dual dimension vectors. 
	
	\subsection{KLR and $o$KLR algebras}\label{ss:orientifold-KLR}

\noeqref{eq: idem def rels 2}
\noeqref{eq: quad def rels 2} 
\noeqref{eq: mixed def rel 2}
\noeqref{eq: def braid rel Aaa}
	
	We recall the definition of orientifold KLR algebras as given by the second author in \cite{Przez-oklr} (see also \cite{VV-HecB, Przez-coha, PAW-B, PAR}).

\begin{definition}\label{def: enhanced quiver}
Let $\Gamma$ be a quiver with a contravariant involution $\theta$ and a dimension vector $\bm\lambda \in \N[J]$ such that $\bm\lambda(i) = 0$ if $i \in J^\theta$. Note that $\bm\lambda$ need not be self-dual. 
We call $\bm\lambda$ the \emph{framing dimension vector}, and the datum $(\Gamma, \theta, \bm\lambda)$ an \emph{enhanced quiver}.
\end{definition}

	Let $(\Gamma, \theta, \bm\lambda)$ be a fixed enhanced quiver. 
Set 
	\[ P_{ij}(u,v) = \delta_{i \neq j} (v-u)^{a_{ij}}, \quad P_{i}(u) = \delta_{i \neq \theta(i)} (-u)^{\bm\lambda(i)}\] 
	for $i, j \in J$, and define $(Q,Q')$ as 
	\begin{equation} \label{eq: P->Q} Q_{ij}(u,v) = P_{ij}(u,v)P_{ji}(v,u), \quad Q'_i(u) = P_i(u)P_{\theta(i)}(-u), \qquad (i,j \in J). 
	\end{equation}  
	
	\begin{definition} \label{def: oklr}
		Let $\beta \in \N[J]^\theta$ with $\Norm{\beta}_{\theta} = n$, and $\alpha \in \N[J]$ with $\ttt \alpha = \beta$. 
\be 
\item The \emph{KLR algebra} $\mathcal{R}(\alpha)$ associated to $(\Gamma, \alpha)$ is the $\cor$-algebra generated by 
$e(\nu)$ $(\nu \in J^\alpha)$, $x_l$ $(1 \leqslant l \leqslant n)$ and $\tau_k$ $(1 \leqslant k \leqslant n-1)$, subject to relations \eqref{eq: idem def rels 1}, \eqref{eq: pol def rels}, \eqref{eq: quad def rels}, \eqref{eq: def braid rel Aa},  \eqref{eq: def braid rel A} and \eqref{eq: mixed def rel 1}. 
\item The \emph{orientifold KLR algebra} $\oklr$ associated to $(\Gamma, \theta,\bm\lambda;\beta)$ is the $\cor$-algebra generated by $e(\nu)$ $(\nu \in \comp)$, $x_l$ $(1 \leqslant l \leqslant n)$, and $\tau_k$ $(0 \leqslant k \leqslant n-1)$, subject to all the relations \eqref{eq: idem def rels 1}--\eqref{eq: mixed def rel 3}. 
\ee

		\vspace{0.25cm}
		\begin{itemize}\itemsep0.25cm
			\item {\it Idempotent relations:}
			\begin{align} 
			e(\nu)e(\nu') = \delta_{\nu,\nu'}e(\nu), \quad 
			x_l e(\nu) =& \ e(\nu) x_l, \quad \tau_k e(\nu) = e(s_k \cdot \nu) \tau_k, \label{eq: idem def rels 1} \\
 \tau_0 e(\nu) =& \ e(s_0 \cdot \nu) \tau_0, \label{eq: idem def rels 2} 
			\end{align}
			\item{\it Polynomial relations:}
			\begin{equation} \label{eq: pol def rels}
			x_lx_{l'} = x_{l'}x_l, 
			\end{equation}
			\item {\it Quadratic relations:} 
			\begin{align}
			\tau_k^2e(\nu) =& \ Q_{\nu_k,\nu_{k+1}}(x_{k+1},x_k)e(\nu), \label{eq: quad def rels} \\
	 \tau_0^2 e(\nu) =& \ Q'_{\nu_1}(-x_1) e(\nu),    \label{eq: quad def rels 2}
			\end{align}
			\item {\it Deformed braid relations:}
			\begin{align} \label{eq: def braid rel Aa}
				\tau_k\tau_{k'} =& \ \tau_{k'}\tau_k \ \ \mbox{if} \ \ k \neq k' \pm 1, \\
 \quad \tau_0\tau_k =& \ \tau_k \tau_0 \ \ \ \mbox{if} \ \ k \neq 1, \label{eq: def braid rel Aaa}
			\end{align}
			\begin{equation} \label{eq: def braid rel A}
				(\tau_{k+1}\tau_k\tau_{k+1} - \tau_k\tau_{k+1}\tau_k)e(\nu) = \textstyle \delta_{\nu_k,\nu_{k+2}} \frac{Q_{\nu_k,\nu_{k+1}}(x_{k+1},x_k) - Q_{\nu_k,\nu_{k+1}}(x_{k+1},x_{k+2})}{x_k - x_{k+2}}e(\nu), \end{equation} 
			\begin{align}
				\label{eq: B braid rel klr}			&((\tau_1\tau_0)^2 - (\tau_0\tau_1)^2)e(\nu) = \\ \\ 
				&=\left\lbrace
				\begin{array}{ll} 
					\frac{Q'_{\nu_2}(x_2) - Q'_{\nu_1}(x_1)}{x_1+x_2}\tau_1e(\nu) & \textrm{ if } \ \nu_1\neq \nu_2, \ \nu_2 = \theta(\nu_1) \\ \\
					\frac{Q_{\nu_1,\nu_2}(x_2,-x_1) - Q_{\nu_1,\nu_2}(-x_2,-x_1)}{x_2} \tau_0 e(\nu) & \textrm{ if } \ \nu_1\neq\theta(\nu_1), \ \nu_2 = \theta(\nu_2), \\ \\ 
					\frac{Q_{\nu_1,\nu_2}(x_2,-x_1)-Q_{\nu_1,\nu_2}(x_2,x_1)}{x_1x_2} (x_1\tau_0+1)e(\nu) & \textrm{ if } \ \theta(\nu_1) = \nu_1 \neq \nu_2=\theta(\nu_2),  \\ \\
					0 & \textrm{ else,} 
				\end{array}\right. 
			\end{align}
			\item {\it Mixed relations:}
			\begin{align} \label{eq: mixed def rel 1}
				(\tau_kx_l - x_{s_k(l)}\tau_k)e(\nu) =& \ 
				\left\lbrace
				\begin{array}{ll}
					- e(\nu) & \textrm{ if } \ l=k,\ \nu_k=\nu_{k+1}, \\[0.3em]
					e(\nu) & \textrm{ if } \ l=k+1,\ \nu_k=\nu_{k+1}, \\[0.3em]
					0 & \textrm{ else, } 
				\end{array} 
				\right. \\ \\ \label{eq: mixed def rel 2}
				(\tau_0x_1 + x_{1}\tau_0)e(\nu) =& \ 
				\left\lbrace
				\begin{array}{ll}
					0 & \textrm{ if } \ \nu_1 \neq \theta(\nu_1), \\[0.3em]
					-2 e(\nu) & \textrm{ if } \ \nu_1 = \theta(\nu_1), 
				\end{array} 
				\right. \\
				\tau_0x_l =& \ x_l \tau_0 \ \ \mbox{if} \ \ l \neq 1, \label{eq: mixed def rel 3}
			\end{align} 
		\end{itemize}
	where, in all the relations above, $1 \leqslant l,l' \leqslant n$ and $1 \leqslant k,k' \leqslant n-1$, except \eqref{eq: def braid rel A}, where $1 \leqslant k \leqslant n-2$. 
	\end{definition}  
	
	These algebras are endowed with the following grading: 
	\begin{align*}
		\deg e(\nu) =& \ 0, \\
		\deg x_k =& \ 2, \\ 
		\deg \tau_k e(\nu) =& \ 
		\left\lbrace
		\begin{array}{ll}
			-2 & \textrm{ if } \nu_k=\nu_{k+1}, \\ 
			\dvec a_{\nu_k,\nu_{k+1}} \quad  & \textrm{ otherwise, } \\  
		\end{array} 
		\right. \\
		\deg \tau_0 e(\nu) =& \ 
		\left\lbrace
		\begin{array}{ll}
			-2 & \textrm{ if } \theta(\nu_1) = \nu_{1}, \\ 
			{}^\theta\bm\lambda(\nu_1) \quad  \ & \textrm{ otherwise. } \\ 
		\end{array} 
		\right. 
	\end{align*} 
If $\bm\lambda = 0$, we abbreviate 
\[ \oaklr = \oklr. \]
	
\begin{remark}
It follows from the PBW theorems (see, e.g., \cite[Thm.~ 3.7]{Rouquier-2KM}, \cite[Prop.~ 2.9]{Przez-oklr}) for the KLR and oKLR algebras that $\klr$ is in fact isomorphic to the subalgebra of $\oklr$ generated by $e(\nu)$ $(\nu \in J^\alpha)$, $x_l$ $(1 \leqslant l \leqslant n)$ and $\tau_k$ $(1 \leqslant k \leqslant n-1)$. 
\rmkend
\end{remark}

	Let $\mathds{1}$ and ${}^\theta\mathds{1}$ denote the regular representations (in degree zero) of the trivial algebras $\klrv{0}$ and $\oklrv{0}$, respectively. 
	For a fixed $\bm\lambda \in \N[J]$, set 
	\[
	\Modgr{\mathcal{R}} = \soplus_{\alpha \in \N[J]} \Modgr{\klr}, \quad \Modgr{{}^\theta \mathcal{R}(\bm\lambda)} = \soplus_{\beta \in \N[J]^{\theta}} \Modgr{\oklr}, 
	\] 
	and abbreviate $\Modgr{{}^\theta \mathcal{R}} = \Modgr{{}^\theta \mathcal{R}(\bm\lambda = 0)}$. 
	We use analogous notation for direct sums of categories of finite dimensional modules. 
	
	
	\subsection{Polynomial representation}\label{ss:polynomial-rep}
	
	Set 
	\begin{alignat*}{6}
		\bP_{ \nu } \coloneqq& \ \cor [x_1, \ldots, x_n] e(\nu), &\quad \bPh_{ \nu } \coloneqq& \ \fml{\cor}{x_1, \ldots, x_n} e(\nu), &\quad
		\widehat{\mathbb{K}}_\nu \coloneqq& \ \Lfml{\cor}{x_1, \ldots, x_n} e(\nu), \\
		\tbP_{ \beta } \ \coloneqq& \soplus_{\nu \in \comp} \bP_{ \nu }, &\quad
		\tbPh_{ \beta } \ \coloneqq& \soplus_{\nu \in \comp} \bPh_{ \nu }, &\quad
		{}^\theta\widehat{\mathbb{K}}_{ \beta } \ \coloneqq& \soplus_{\nu \in \comp} \widehat{\mathbb{K}}_{ \nu }. 
	\end{alignat*}
	We abbreviate $x_{-l} = - x_l$ for $1 \leqslant l \leqslant n$. 
	There is a natural left action of the Weyl group $\weyl_n$ on $\Lfml{\cor}{x_1,\hdots,x_n}$ 
	given by $w\cdot x_l = x_{w(l)}$. This extends to an action on ${}^\theta\widehat{\mathbb{K}}_{ \beta }$ by
	\begin{equation} \label{eq: weyl action klr} w \cdot f e(\nu) = w(f) e(w\cdot \nu), \end{equation} 
	for $w \in \weyl_n$ and $f \in \Lfml{\cor}{x_1,\hdots,x_n}$. 
	
	\begin{prop} \label{pro: polrep oklr}
		The algebra $\oklr$ has a faithful polynomial representation on $\tbP_{ \beta }$, given by: 
		\begin{itemize} 
			\item $e(\nu)$ $(\nu \in \comp)$ acting as projection onto $\bP_\nu$, 
			\item $x_1, \hdots, x_n$ acting naturally by multiplication, 
			\item $\tau_1, \hdots, \tau_{n-1}$ acting via  
			\[ 
			\tau_k \cdot f e(\nu) = 
			\left\lbrace
			\begin{array}{ll}
				\displaystyle\frac{s_k(f)-f}{x_{k}-x_{k+1}}e(\nu) & \textrm{ if } \nu_k=\nu_{k+1}, \\ \\ 
				P_{\nu_k,\nu_{k+1}}(x_{k},x_{k+1})s_k( f)e\bigl(s_k\cdot \nu \bigr) & \textrm{ otherwise, } \\ \\
			\end{array} 
			\right. 
			\]
			\item $\tau_0$ acting via 
			\[
			\tau_0 \cdot f e(\nu) = 
			\left\lbrace
			\begin{array}{ll}
				\displaystyle\frac{s_0(f)-f}{x_1}e(\nu) & \textrm{ if } \theta(\nu_1) = \nu_1, \\ \\
				P_{\nu_1}(x_1)s_0(f)e\bigl(s_0\cdot \nu \bigr) & \textrm{ otherwise. } 
			\end{array}
			\right. 
			\]
		\end{itemize} 
	\end{prop}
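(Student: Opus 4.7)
The plan is to break the proof into two parts: first verify that the stated formulas are compatible with all the defining relations of $\oklr$ listed in Definition~\ref{def: oklr}, then invoke the PBW theorem to deduce faithfulness.

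For the well--definedness, most of the work is already done in the literature. The formulas for $e(\nu)$, the $x_l$'s and the $\tau_k$'s with $k \geqslant 1$ are nothing but the classical Khovanov--Lauda--Rouquier polynomial representation of $\klrv{\alpha}$ on $\soplus_{\nu \in J^\alpha} \bP_\nu$, hence relations \eqref{eq: idem def rels 1}, \eqref{eq: pol def rels}, \eqref{eq: quad def rels}, \eqref{eq: def braid rel Aa}--\eqref{eq: def braid rel A} and \eqref{eq: mixed def rel 1} are standard. What remains is to check the relations involving $\tau_0$, namely \eqref{eq: idem def rels 2}, \eqref{eq: quad def rels 2}, \eqref{eq: def braid rel Aaa}, \eqref{eq: B braid rel klr}, \eqref{eq: mixed def rel 2} and \eqref{eq: mixed def rel 3}. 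Before doing so, one first observes that the formula for $\tau_0 \cdot f e(\nu)$ in the case $\theta(\nu_1) = \nu_1$ is well--defined, because $s_0(f) - f$ is always divisible by $x_1$ (similarly for $\tau_k$ in the case $\nu_k=\nu_{k+1}$).

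The idempotent relation \eqref{eq: idem def rels 2} is immediate from the definition, as is the commutation \eqref{eq: mixed def rel 3} since $s_0$ acts only on $x_1$ and $P_{\nu_1}(x_1)$ is a polynomial in $x_1$ alone. The commutations \eqref{eq: def braid rel Aaa} between $\tau_0$ and $\tau_k$ for $k \geqslant 2$ follow from the fact that $\tau_0$ affects only the first strand while $\tau_k$ affects strands $k$ and $k+1$. The quadratic relation \eqref{eq: quad def rels 2} is a direct computation using $P_i(u) P_{\theta(i)}(-u) = Q'_i(u)$ and splitting into the cases $\theta(\nu_1) \neq \nu_1$ and $\theta(\nu_1) = \nu_1$; in the latter case one uses that $Q'_{\nu_1}(u)$ is an even polynomial. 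The mixed relation \eqref{eq: mixed def rel 2} follows analogously from the identity $s_0(x_1) = -x_1$ and a short computation. The most delicate point, which is the main technical obstacle, is the type $\mathsf{B}$ braid relation \eqref{eq: B braid rel klr}: one must apply $(\tau_1\tau_0)^2 - (\tau_0\tau_1)^2$ to $fe(\nu)$ and match the result with each of the four cases. Each case involves composing the intertwining factors $P_{\nu_i,\nu_j}$ and $P_{\nu_i}$ and tracking how the divided--difference operators interact under the products $s_1 s_0 s_1 s_0$ and $s_0 s_1 s_0 s_1$ in $\weyl_n$. The check is analogous to, and extends, the verifications carried out in \cite{VV-HecB, Przez-oklr, PAW-B} for the framing--free case or for less general $Q_{ij}$, so it should go through without conceptual novelty, though the bookkeeping is substantial.

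Having established well--definedness, faithfulness follows from the PBW theorem for $\oklr$ recalled in the remark after Definition~\ref{def: oklr}. Fix a choice of reduced expression $w = s_{i_1}\cdots s_{i_r}$ for each $w \in \weyl_n$, and set $\tau_w \coloneqq \tau_{i_1} \cdots \tau_{i_r}$. Then $\oklr$ is a free module over its polynomial subalgebra with basis $\{ \tau_w e(\nu) \mid w \in \weyl_n,\ \nu \in \comp \}$. Suppose an element $a = \sum_{w,\nu} \tau_w p_{w,\nu}(x) e(\nu)$ acts as zero on $\tbP_\beta$. Fixing $\nu$ and acting on $1 \cdot e(\nu)$, one obtains a sum over $w$ of terms living in distinct weight subspaces $\bP_{w \cdot \nu}$ of $\tbP_\beta$; within each $\bP_{w \cdot \nu}$, a straightforward leading--term analysis with respect to the natural degree filtration on $\cor[x_1,\dots,x_n]$ shows that the contribution of $\tau_w p_{w,\nu}(x) e(\nu)$ has leading term $w \cdot p_{w,\nu}(x)$ (up to a nonzero scalar coming from the intertwining factors), which forces $p_{w,\nu} = 0$ for all $w,\nu$. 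Hence $a = 0$ and the representation is faithful, completing the proof.
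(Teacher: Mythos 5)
The paper's own proof is simply a citation to \cite[Prop.~2.7]{Przez-oklr}, so you are proving the statement from scratch, which is a legitimate and more self-contained approach. The well-definedness portion of your proof is sound in outline: reducing to the classical KLR verification for the $\tau_k$ with $k\geqslant1$, observing the divisibility $x_1 \mid (s_0(f)-f)$, and isolating the type~$\mathsf{B}$ braid relation \eqref{eq: B braid rel klr} as the genuinely new bookkeeping is exactly the right decomposition, and the computations you defer to the literature are indeed routine extensions of those in \cite{VV-HecB, Przez-oklr, PAW-B}.

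The faithfulness argument, however, has a genuine gap. You propose to test an element $a=\sum_{w,\nu}\tau_w p_{w,\nu}(x)e(\nu)$ against the single vector $1\cdot e(\nu)$ and to extract the coefficients $p_{w,\nu}$ by a leading-term analysis. This does not work. First, acting on $1\cdot e(\nu)$ is simply not injective: already in rank one with $\theta(\nu_1)=\nu_1$ one has $\tau_0\cdot 1\,e(\nu)=\frac{s_0(1)-1}{x_1}e(\nu)=0$ while $\tau_0 e(\nu)\neq 0$ in $\oklr$, so a nonzero element can annihilate $1\cdot e(\nu)$. Second, the claim that the leading term of $\tau_w\cdot p_{w,\nu}(x)e(\nu)$ is $w\cdot p_{w,\nu}(x)$ (up to a scalar) is incorrect: when $\nu_k=\nu_{k+1}$ the operator $\tau_k$ is a Demazure divided-difference operator which \emph{lowers} degree by one, and when $\nu_k\neq\nu_{k+1}$ one picks up the nonconstant polynomial factor $P_{\nu_k,\nu_{k+1}}$, not a scalar. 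Because of this, contributions from different $w$ with the same image $w\cdot\nu$ are not distinguished by your criterion. To repair the argument one must act on generic (high-degree) polynomials rather than on $1$; the standard route is to pass to the localized action on $\tbK$, use that the $\weyl_n$-action on the Laurent field is faithful (Dedekind independence of distinct automorphisms), and note that $\tau_w e(\nu)$, written in $\skewrr$, has a nonzero leading $\cor[\weyl_n]$-coefficient in length $\ell(w)$; this, combined with the PBW spanning set, gives injectivity. As written your sketch does not establish this.
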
 
	
	\begin{proof}
		See \cite[Prop.~ 2.7]{Przez-oklr}. 
	\end{proof}
	
	Next, for each $i,j \in J $,
	we choose holomorphic functions $c_{ij}(u,v) \in \fml{\cor}{u,v}$ such that
	\begin{equation} \label{eq: c-ij} 
		c_{ij}(u,v)c_{ji}(v,u)=1, \quad c_{ii}(u,v)=1, \quad c_{ij}(u,v) = c_{\theta(j)\theta(i)}(-v,-u).  
	\end{equation}
	Moreover, for each $i \in J$, we also choose holomorphic functions $c_i \in \fml{\cor}{u}$ such that 
	\begin{equation} \label{eq: c-i} 
		c_i(u)c_{\theta(i)}(-u)=1, \qquad i = \theta(i) \ \Rightarrow \ c_i(u) = 1. 
	\end{equation} 
	Set
	\begin{equation*}
		\widetilde{P}_{ij}(u,v) = P_{ij}(u,v)c_{ij}(u,v), \quad \widetilde{P}_{i}(u) = P_i(u)c_i(u). 
	\end{equation*} 
Note that the corresponding $(Q,Q')$ remain unchanged. 
	
	\begin{corollary} \label{cor: skew ring emb}
		There is an injective $\tbP_{ \beta }$-algebra homomorphism 
		\begin{equation} \label{eq: oklr loc}
		\oklr \hookrightarrow \skewrr
		\end{equation}
		given by 
		\begin{align*} 
			\tau_0 e(\nu) =& \ 
			\left\lbrace
			\begin{array}{ll}
				x_1^{-1}(s_0-1	) e(\nu) \ \qquad \qquad & \textrm{ if } \nu_1 = \theta(\nu_1), \\[2pt] 
				\widetilde{P}_{\nu_1}(x_1)s_0e(\nu) \ \qquad \qquad & \textrm{ otherwise, } 
			\end{array}
			\right. \\
			\tau_k e(\nu) =& \ 
			\left\lbrace
			\begin{array}{ll}
				(x_k-x_{k+1})^{-1}(s_k-1)e(\nu) & \textrm{ if } \nu_k=\nu_{k+1}, \\ 
				\widetilde{P}_{\nu_k,\nu_{k+1}}(x_k,x_{k+1})s_k e(\nu) & \textrm{ otherwise, } \\[2pt] 
			\end{array} 
			\right. 
		\end{align*} 
		for $1 \leqslant k \leqslant n-1$. 
	\end{corollary}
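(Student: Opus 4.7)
The plan is to check that the proposed assignment extends to a well-defined algebra homomorphism from $\oklr$ into $\skewrr$, and then to deduce its injectivity from the faithfulness of the polynomial representation (Proposition \ref{pro: polrep oklr}).

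First, I would verify that each of the displayed formulas lands in $\skewrr$ and preserves the subspace $\tbP_{\beta}$ of ${}^\theta\widehat{\mathbb{K}}_{\beta}$. The only non-obvious case is when $\nu_k = \nu_{k+1}$ (resp.\ $\nu_1 = \theta(\nu_1)$), where the formula involves the formal inverse $(x_k - x_{k+1})^{-1}$ (resp.\ $x_1^{-1}$); these make sense because $(s_k - 1)$ sends $\cor[x_1, \ldots, x_n]$ into the ideal $(x_k - x_{k+1})$ (resp.\ $(s_0 - 1)$ into the ideal $(x_1)$), so dividing returns a polynomial. The remaining formulas are manifestly elements of $\skewrr$ preserving $\tbP_\beta$.

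Next I would verify all defining relations of $\oklr$ on the proposed images. The idempotent, polynomial, and the $l \neq 1$ mixed relations are immediate. The relations involving only the generators $\tau_k$, $k \geq 1$, are standard for KLR algebras and follow by the usual computation, with the extra factors $c_{\nu_k,\nu_{k+1}}(x_k,x_{k+1})$ cancelling pairwise thanks to the cocycle property $c_{ij}(u,v)c_{ji}(v,u) = 1$; see also \eqref{eq: P->Q}. The condition $c_i(u) c_{\theta(i)}(-u) = 1$ plays the analogous role for the quadratic relation \eqref{eq: quad def rels 2} on $\tau_0^2$. The intricate braid-type relation \eqref{eq: B braid rel klr} requires a case analysis on the pair $(\nu_1, \nu_2)$; in each case, the symmetry $c_{ij}(u,v) = c_{\theta(j)\theta(i)}(-v,-u)$ from \eqref{eq: c-ij} is precisely what matches the coefficients on the two sides.

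For injectivity, I would argue as follows. Composing the proposed assignment with the natural action of $\skewrr$ on $\tbP_\beta$ produces a second action of $\oklr$ on $\tbP_\beta$, and this action is intertwined with the polynomial representation of Proposition \ref{pro: polrep oklr} by an operator of the form $T(fe(\nu)) = \gamma_\nu f e(\nu)$ for suitable nonzero scalars $\gamma_\nu \in \widehat{\mathbb{K}}_\nu$. Comparing the two actions of $\tau_k$ and $\tau_0$ forces the $1$-cocycle identities $\gamma_{s_k \cdot \nu} = c_{\nu_k,\nu_{k+1}}(x_k, x_{k+1}) s_k(\gamma_\nu)$ and $\gamma_{s_0 \cdot \nu} = c_{\nu_1}(x_1) s_0(\gamma_\nu)$; the conditions \eqref{eq: c-ij} and \eqref{eq: c-i} are exactly what guarantee that these can be solved consistently by fixing one $\gamma_\nu$ in each $\weyl_n$-orbit and propagating. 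Since the polynomial representation is faithful, so is its rescaling, and hence the proposed map itself is injective.

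The main obstacle will be the case-by-case verification of \eqref{eq: B braid rel klr} and the attendant bookkeeping of the various $c_{ij}$ and $c_i$ factors arising in the four-fold product $(\tau_1\tau_0)^2 e(\nu)$. Those computations closely parallel analogous results for localized quiver Hecke algebras of type $\sfB$ in \cite{VV-HecB, PAW-B}, which provide a useful template.
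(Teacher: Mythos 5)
The paper's own proof of this corollary is a one-line citation to \cite[Cor.~2.8]{Przez-oklr}, so there is no detailed in-paper argument to compare against; you are essentially reconstructing the proof from the cited source.

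Your outline for well-definedness is sound: the case analysis for where $(x_k-x_{k+1})^{-1}$ and $x_1^{-1}$ are needed, the relation-checking, and in particular the identification of which conditions in \eqref{eq: c-ij} and \eqref{eq: c-i} are responsible for which relations (the unitarity $c_{ij}c_{ji}=1$ for $\tau_k^2$, the condition $c_i(u)c_{\theta(i)}(-u)=1$ for $\tau_0^2$, and the $\theta$-symmetry of $c_{ij}$ for \eqref{eq: B braid rel klr}) are all correct in spirit and match the standard computations for localized/intertwiner presentations of (orientifold) quiver Hecke algebras.

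The injectivity argument, however, has a genuine gap. You propose to intertwine the composed representation with the faithful polynomial representation of Proposition~\ref{pro: polrep oklr} by a diagonal rescaling $T(fe(\nu))=\gamma_\nu fe(\nu)$, where the $\gamma_\nu$ are determined from a base point in each $\weyl_n$-orbit by propagating along the cocycle $\gamma_{s_k\cdot\nu}=c_{\nu_k,\nu_{k+1}}(x_k,x_{k+1})s_k(\gamma_\nu)$ and its $s_0$-analogue. This only works if the orbit is free. Whenever $\nu$ has a nontrivial stabilizer in $\weyl_n$ (which happens as soon as $\nu_k=\nu_{k+1}$ for some $k$, or $\theta(\nu_1)=\nu_1$, or more generally whenever entries of $\nu$ coincide up to $\theta$), the cocycle restricted to the stabilizer imposes a \emph{constraint} on the base value $\gamma_\nu$, not a freedom. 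For the simple reflections generating the stabilizer the constraint reduces to plain $s_k$- (resp.\ $s_0$-) invariance of $\gamma_\nu$ because $c_{ii}=1$ and $c_i=1$ on fixed points; but stabilizers are typically larger (e.g.\ for $\nu=(i,j)$ with $\theta(i)=i,\ \theta(j)=j$, the stabilizer in $\weyl_2$ contains $s_1s_0s_1$, and the corresponding cocycle value is $c_{ji}(x_1,x_2)c_{ji}(-x_1,x_2)$, which is not~$1$). One must therefore check that the $1$-cocycle on each stabilizer is a coboundary; this does hold (the constant term works out to~$1$ using the axioms on $c$, and then one extracts a square root in $(\Lfml{\cor}{x})^\times$, available since $\cor\supset\overline{\Q}$), but that verification is an essential step that is absent from your sketch, and it is where the actual content of the conditions \eqref{eq: c-ij}--\eqref{eq: c-i} in the degenerate cases is used. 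A cleaner and more standard route to injectivity, avoiding the cocycle issue entirely, is to invoke the PBW theorem for $\oklr$ (cited in the paper as \cite[Prop.~2.9]{Przez-oklr}): the image of a PBW basis element $\tau_wx^ae(\nu)$ in $\skewrr$ has, with respect to the length filtration of $\weyl_n$, leading term of the form $g\cdot x^a e(\nu)\,w$ with $g$ an invertible element of $\tbK[\beta]$, and these leading terms are visibly linearly independent; hence the map is injective. I would recommend replacing the intertwiner argument with this triangularity argument, or at minimum spelling out the stabilizer-coboundary verification.
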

	
	\begin{proof}
		See \cite[Corollary 2.8]{Przez-oklr}. 
	\end{proof}
	
	Given $\alpha \in \N[J]$ with $\ttt \alpha = \beta$, let 
	\[ 
	\bP_{ \alpha } \ \coloneqq \soplus_{\nu \in J^\alpha} \bP_{ \nu }, \qquad
	\bPh_{ \alpha } \ \coloneqq \soplus_{\nu \in J^\alpha} \bPh_{ \nu }, \qquad
	{}^\theta\widehat{\mathbb{K}}_{ \alpha } \ \coloneqq \soplus_{\nu \in J^\alpha} \widehat{\mathbb{K}}_{ \nu }. \] 
	The embedding \eqref{eq: oklr loc} restricts to a $\bP_{ \alpha }$-algebra homomorphism 
	\begin{equation} \label{eq: klr loc}
	\klr \hookrightarrow \widehat{\mathbb{K}}_\alpha \rtimes \cor[\sym_n].
	\end{equation}

	\subsection{One-dimensional modules}

	
	Given $\mu \in \comp$, let ${}^\theta L(\mu)$ be the free $\cor$-module $\cor u_\mu$ with generator $u_\mu$ of degree zero. 
	
\begin{lemma} \label{lem: 1dim mod oklr} 
Setting
\begin{equation} \label{eq: 1dim mod oklr}    x_l \cdot u_\mu= 0, \quad \tau_k \cdot u_\mu = 0, \quad e(\nu) \cdot u_\mu = \delta_{\nu,\mu}u_\mu, \end{equation}
for $1 \leqslant l \leqslant n$, $0 \leqslant k < n$ and $\nu \in \comp$, makes ${}^\theta L(\mu)$ into an $\oklr$-module if and only if 
\begin{enumerate}[label=(\alph*), itemsep = 2pt] 
\item 
$\mu_k \neq \mu_{k+1}$ and 	$\dvec a_{\mu_k,\mu_{k+1}} \geqslant 1$  (for $1 \leqslant k < n$), 
\item whenever $\mu_k = \mu_{k+2}$, then $\dvec a_{\mu_{k},\mu_{k+1}} \neq 1$ (for $1 \leqslant k < n-1$),
\item $\mu_1 \neq \theta(\mu_1)$ and ${}^\theta\bm\lambda(\mu_1) \geqslant 1$. 
		\end{enumerate}
\end{lemma}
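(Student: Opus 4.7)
The plan is to verify directly that the formulas \eqref{eq: 1dim mod oklr} satisfy each defining relation of Definition~\ref{def: oklr} on $\cor u_\mu$, and to read off conditions (a)--(c) as the precise obstructions. The organizing principle is that both $x_l$ and $\tau_k$ annihilate $u_\mu$, so any monomial in the generators acts on $u_\mu$ either as zero or as the constant term at the origin of its polynomial coefficient times $u_\mu$. Hence most relations are automatic and only a handful require genuine case analysis.

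I would first dispose of the relations with both sides vanishing on $u_\mu$: the idempotent relations \eqref{eq: idem def rels 1}--\eqref{eq: idem def rels 2}, the polynomial relation \eqref{eq: pol def rels}, the far-commutation braid relations \eqref{eq: def braid rel Aa}--\eqref{eq: def braid rel Aaa}, and the mixed relation \eqref{eq: mixed def rel 3}. The B-braid relation \eqref{eq: B braid rel klr} is also automatic: in each potentially nontrivial branch the right-hand side ends with a factor of $\tau_0$ or $\tau_1$ that kills $u_\mu$, and the one branch that does not---the $(x_1\tau_0+1)$ term---requires $\theta(\nu_1)=\nu_1$ and so is excluded at $\nu=\mu$ once (c) is imposed.

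The genuine constraints emerge at $\nu = \mu$. The mixed relation \eqref{eq: mixed def rel 1} with $l\in\{k,k+1\}$ gives $0 = \pm u_\mu$ unless $\mu_k \neq \mu_{k+1}$, and \eqref{eq: mixed def rel 2} likewise forces $\mu_1 \neq \theta(\mu_1)$. The quadratic relations \eqref{eq: quad def rels} and \eqref{eq: quad def rels 2} then read $0 = Q_{\mu_k,\mu_{k+1}}(0,0)\, u_\mu$ and $0 = Q'_{\mu_1}(0)\, u_\mu$; using the closed forms
\[
Q_{ij}(u,v) = (-1)^{a_{ji}}(v-u)^{\dvec a_{ij}} \ (i\neq j),
\qquad
Q'_i(u) = (-1)^{\bm\lambda(i)}\, u^{\,{}^\theta\bm\lambda(i)} \ (i \neq \theta(i)),
\]
these translate respectively to $\dvec a_{\mu_k,\mu_{k+1}} \geqslant 1$ and ${}^\theta\bm\lambda(\mu_1) \geqslant 1$, completing conditions (a) and (c).

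The braid relation \eqref{eq: def braid rel A} is the only step requiring a genuine computation and is the main obstacle. Its left-hand side vanishes on $u_\mu$, while its right-hand side is trivial unless $\mu_k = \mu_{k+2}$. In that case, setting $m = \dvec a_{\mu_k,\mu_{k+1}}$, $a = x_k - x_{k+1}$, $b = x_{k+2} - x_{k+1}$, and invoking the divided-difference identity $(a^m - b^m)/(a-b) = \sum_{i=0}^{m-1} a^i b^{m-1-i}$, the polynomial coefficient of $e(\mu)$ reduces up to sign to $\sum_{i=0}^{m-1}(x_k-x_{k+1})^i(x_{k+2}-x_{k+1})^{m-1-i}$. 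Its constant term equals $1$ if $m=1$ and $0$ otherwise, so the relation holds on $u_\mu$ precisely when $\dvec a_{\mu_k,\mu_{k+1}} \neq 1$ whenever $\mu_k = \mu_{k+2}$---exactly condition (b). The difficulty is organizational rather than technical: one must systematically enumerate the branches of \eqref{eq: B braid rel klr} and carry out the divided-difference evaluation for \eqref{eq: def braid rel A}; the rest of the verification is bookkeeping.
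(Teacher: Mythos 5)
Your proposal is correct and follows essentially the same route as the paper: direct inspection of the defining relations, noting that the LHS of each relation vanishes on $u_\mu$ and then reading off from the RHS the constraints (a)--(c). You supply two details the paper leaves implicit: the divided-difference computation showing that the constant term of $\bigl((x_k-x_{k+1})^m - (x_{k+2}-x_{k+1})^m\bigr)/(x_k-x_{k+2})$ is nonzero exactly when $m=1$ (which is what makes the RHS of \eqref{eq: def braid rel A} give precisely condition (b)), and the observation that the $(x_1\tau_0+1)$ branch of \eqref{eq: B braid rel klr} is only reached when $\theta(\nu_1)=\nu_1$, which is ruled out at $\nu=\mu$ once (c) holds -- the paper subsumes this under ``all other relations hold without extra assumptions'' without comment. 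Both additions are accurate and make the argument more transparent.
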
 
	
\begin{proof}
The proof is based on a direct inspection of the defining relations of $\oklr$. 
The definition of the action \eqref{eq: 1dim mod oklr} implies that the LHS of the relations \eqref{eq: quad def rels}--\eqref{eq: mixed def rel 3} must be zero. The RHS of \eqref{eq: mixed def rel 1} is zero if and only if $\mu_k \neq \mu_{k+1}$. Moreover, the RHS of \eqref{eq: quad def rels} is zero if and only if $\mu_k = \mu_{k+1}$ (which cannot be the case) or the polynomial $Q_{\mu_k,\mu_{k+1}}(x_{k+1}, x_k)$ has no constant term. The latter is the case if and only if $\dvec a_{\mu_k,\mu_{k+1}} \geqslant 1$. 
Similarly, the RHS of \eqref{eq: def braid rel A} is zero if and only if condition (b) holds. 

Next, the RHS of \eqref{eq: mixed def rel 2} is zero if and only if $\mu_1 \neq \theta(\mu_1)$. Moreover, the RHS of \eqref{eq: quad def rels 2} is zero if and only if $\mu_1 = \theta(\mu_1)$ (which cannot be the case) or $Q'_{\mu_1}(-x_1)$ has no constant term. The latter is the case if and only if ${}^\theta\bm\lambda(\mu_1) \geqslant 1$. 

Since all the other relations hold without any extra assumptions, we have proven the necessity and sufficiency of conditions (a)--(c). 
\end{proof} 
	
	Note that if $\mu \in J^\alpha$ and conditions (a)--(b) are satisfied then \eqref{eq: 1dim mod oklr} (with $1 \leqslant k <n$) defines the structure of a $\klr$-module on $\cor u_\mu$, which we then denote by $L(\mu)$. 
	
	\subsection{Convolution product and monoidal action} \label{ss:convolution-product}

	We recall the definition of the convolution product of modules over KLR algebras. 
	Let $\alpha, \alpha' \in \N[J]$ with $\Norm{\alpha} = n$ and $\Norm{\alpha'} = n'$. Set 
	\[ e_{\alpha,\alpha'} = \sum_{\stackrel{\nu \in J^{\alpha+\alpha'}}{ \nu_1 \cdots \nu_n \in J^\alpha}} e(\nu) \in \klrv{\alpha+\alpha'}.\]
	There is a non-unital algebra homomorphism 
	\begin{equation} \label{eq: induction inc klr}
	\iota_{\alpha,\alpha'} \colon \ \klrvv{\alpha}{\alpha'} := \klrv{\alpha} \otimes \klrv{\alpha'} \ \to \ \klrv{\alpha + \alpha'} 
	\end{equation}
	given by $e(\nu) \otimes e(\mu) \mapsto e(\nu\mu)$ for $\nu \in J^\alpha$, $\mu \in J^{\alpha'}$ and 
	\begin{alignat}{3} \label{eq: induc1}
		x_l \otimes 1 \mapsto& \ x_{l} e_{\alpha,\alpha'}, \quad& 1 \otimes x_{l'} \mapsto& \ x_{m+l'}e_{\alpha,\alpha'}\,, &\qquad& 
		\\ \label{eq: induc2}
		\tau_k \otimes 1 \mapsto& \ \tau_{k}e_{\alpha,\alpha'}, \quad& 1 \otimes \tau_{k'} \mapsto& \ \tau_{m+l}e_{\alpha,\alpha'\,,} &\qquad& 
	\end{alignat} 
	where 
	$1 \leqslant l \leqslant n$, $1 \leqslant l' \leqslant n'$, $1 \leqslant k< n$, and $1 \leqslant k'< n'$.
	Let $M$ be a graded $\klrv{\alpha}$-module and $N$ be a graded $\klrv{\alpha'}$-module. Their \emph{convolution product} is defined as 
	\[
	M \circ N = \klrv{\alpha+\alpha'}e_{\alpha,\alpha'}\otimes_{\klrvv{\alpha}{\alpha'}}(M \otimes N). 
	\] 
	
	Next, let $\beta \in \N[J]^\theta$ with $\Norm{\beta}_\theta = n$. Set 
	\[ {}^\theta e_{\beta,\alpha'} = \sum_{\substack{\nu \in {}^\theta J^{\beta+{}^\theta\alpha'}, \ \nu_1\hdots\nu_n \in {}^\theta J^\beta \\ \nu_{n+1}\hdots\nu_{n+n'} \in J^{\alpha'}}} e(\nu) \in \oklrv{\beta+{}^\theta\alpha'}.\] 
	There is an injective non-unital algebra homomorphism 
	\begin{equation} \label{eq: induction inc}
	\ttt\iota_{\beta,\alpha'} \colon \ \oklrvv{\beta}{\alpha'} := \oklr \otimes \klrv{\alpha'} \ \to \ \oklrv{\beta+{}^\theta\alpha'}
	\end{equation}
	given by formulae \eqref{eq: induc1}-\eqref{eq: induc2} (with $\nu \in \comp$ and $e_{\beta,\alpha'}$ replaced by ${}^\theta e_{\beta,\alpha'}$) and $\tau_0 \otimes 1 \mapsto \tau_0 {}^\theta e_{\beta,\alpha'}$. 
	The \emph{convolution action} of $N \in \Modgr{\klrv{\alpha'}}$ on $M \in \Modgr{\oklr}$ is defined as 
	\[
	M \acts N =  \oklrv{\beta+{}^\theta\alpha'}{}^\theta e(\beta,\alpha')\otimes_{\oklrvv{\beta}{\alpha'}}(M \otimes N). 
	\] 
	

	\begin{prop}
		The category $\Modgr{\mathcal{R}}$ is monoidal with product $\circ$ and unit $\mathds{1}$. Moreover, there is a right monoidal action of $\Modgr{\mathcal{R}}$ on \linebreak $\Modgr{{}^\theta \mathcal{R}(\bm\lambda)}$ via $\acts$. 
		The product and the action restrict to the full subcategories $\Modfdgr{\mathcal{R}}$ and $\Modfdgr{{}^\theta \mathcal{R}(\bm\lambda)}$, and descend to the corresponding Grothendieck groups. 
	\end{prop}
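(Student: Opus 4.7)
The plan is to realize both the convolution product $\circ$ and the convolution action $\acts$ as induction along the algebra embeddings $\iota_{\alpha,\alpha'}$ and $\ttt\iota_{\beta,\alpha'}$ constructed in the previous subsection, and then deduce all required categorical axioms from the PBW theorems for KLR and $o$KLR algebras (cited in the excerpt).

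The first step is to establish that $(\Modgr{\mathcal{R}}, \circ, \triv)$ is a monoidal category. For associativity, I would introduce the triple idempotent $e_{\alpha,\alpha',\alpha''} \in \klrv{\alpha+\alpha'+\alpha''}$, defined as the sum of $e(\nu)$ over all $\nu$ whose initial, middle, and final segments lie in $J^\alpha$, $J^{\alpha'}$, $J^{\alpha''}$ respectively, and show by direct inspection of the generators that the natural bimodule $\klrv{\alpha+\alpha'+\alpha''} e_{\alpha,\alpha',\alpha''}$ decomposes as
\[
\klrv{\alpha+\alpha'+\alpha''} e_{\alpha,\alpha'+\alpha''} \otimes_{\klrv{\alpha'+\alpha''}} \klrv{\alpha'+\alpha''} e_{\alpha',\alpha''}
\]
as well as in the symmetric form. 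This supplies a natural isomorphism $(M \circ N) \circ N' \isoto M \circ (N \circ N')$, and the pentagon axiom follows from the associativity of tensor products. The unit constraint is immediate from $\klrv{0} = \cor$ and $e_{0,\alpha} = e_{\alpha,0} = 1$.

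The second step is the module structure. The argument runs in parallel: for $M \in \Modgr{\oklrv{\beta}}$ and $N \in \Modgr{\klrv{\alpha'}}$, $N' \in \Modgr{\klrv{\alpha''}}$, the associator $(M \acts N) \acts N' \isoto M \acts (N \circ N')$ comes from an analogous decomposition of $\oklrv{\beta + \ttt(\alpha'+\alpha'')}\, \ttt e_{\beta,\alpha',\alpha''}$ as a tensor product of bimodules, where the only novelty is that the leftmost factor is an $o$KLR algebra rather than a KLR algebra. Compatibility with the associator on $\Modgr{\mathcal{R}}$, i.e.\ the module-category pentagon in the sense of \cite[Eqns.~(6)--(7)]{haring-old}, is then a formal consequence; the unit constraint is again immediate.

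For the restriction to finite-dimensional subcategories, the key input is freeness. By the PBW theorems for KLR and $o$KLR algebras (\cite[Thm.~3.7]{Rouquier-2KM}, \cite[Prop.~2.9]{Przez-oklr}), the module $\klrv{\alpha+\alpha'} e_{\alpha,\alpha'}$ is free of finite rank as a right $\klrvv{\alpha}{\alpha'}$-module, with rank equal to $|\sym_{n+n'}/(\sym_n \times \sym_{n'})|$, and likewise $\oklrv{\beta + \ttt\alpha'}\, \ttt e_{\beta,\alpha'}$ is free of finite rank over $\oklrvv{\beta}{\alpha'}$, with rank equal to $|\weyl_{n+n'}/(\weyl_n \times \sym_{n'})|$. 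This implies both that $\circ$ and $\acts$ preserve finite-dimensionality and that they are exact in each argument, so they descend to well-defined operations on the Grothendieck groups. The main technical obstacle is verifying the triple-induction bimodule decomposition at the level of PBW bases; once this is in hand, all remaining verifications reduce to formal manipulations. In the unmixed case this appears in \cite{kang-kashiwara-kim-18}, and in the orientifold case the argument carries over verbatim upon replacing $\sym_n$ by $\weyl_n$ on the leftmost factor.
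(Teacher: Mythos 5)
Your proposal follows essentially the same route as the paper: cite the known KLR case, derive the associativity constraint for the module action from the "both ways to the middle" decomposition of the induced bimodule $\oklrv{\beta+{}^\theta(\alpha'+\alpha'')}\,\ttt e_{\beta,\alpha',\alpha''}$, and use freeness (finite rank) of $\oklrv{\beta+{}^\theta\alpha'}$ over $\oklrvv{\beta}{\alpha'}$ to deduce preservation of finite-dimensionality, biexactness, and hence descent to Grothendieck groups. The only differences are cosmetic — the paper cites Varagnolo--Vasserot for the freeness statement where you invoke the PBW theorems directly, and it treats the triple-idempotent isomorphism as a direct consequence rather than flagging it as the main technical obstacle — but the argument is the same.
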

	
	\begin{proof}
In the case of KLR algebras, the proof is well known, see, e.g., \cite[\S3.1]{Khovanov-Lauda-1}. 
In the orientifold case, the argument is analogous, so we only highlight the main points. 
The crucial fact is that the $\oklrvv{\beta}{\alpha'}$-module $\oklrv{\beta+{}^\theta\alpha'}$ is free of finite rank (see, e.g., \cite[Lem.~8.7]{VV-HecB}). Firstly, it implies that $M \acts N$ is finite dimensional whenever $M$ and $N$ are. Secondly, it implies that the bifunctor $- \acts -$ is biexact, and hence descends to the Grothendieck groups of graded finite dimensional modules. Thirdly, 
the existence of a suitable associativity constraint (see \S\ref{ss:coideal-module-cat}) follows from the natural isomorphisms
\[
M \acts (N \circ P) \cong
\oklrv{\beta+{}^\theta\alpha^\circ} e \otimes_{\oklr \otimes \klrv{\alpha'} \otimes \klrv{\alpha''}}(M\otimes N \otimes P)
\cong 
(M \acts N) \acts P, 
\]
where $\alpha^\circ = \alpha' + \alpha''$ and $e$ is the sum of all idempotents $e(\nu)$ such that $\nu \in {}^\theta J^{\beta+{}^\theta\alpha^\circ}$, $\nu_1 \cdots \nu_n \in {}^\theta J^{\beta}$, $\nu_{n+1} \cdots \nu_{n+n'} \in J^{\alpha'}$ and $\nu_{n+n'+1} \cdots \nu_{n+n'+n''} \in J^{\alpha''}$. 
\end{proof}
	
	
	\section{The combinatorial model} \label{s:combinatorics}
	Throughout this section, we fix the following data:
	\begin{itemize}\itemsep0.25cm
		\item an affine Satake diagram $(X,\tau)$ with pseudo-involution $\tsat$\,;
		\item a QSP subalgebra $\Uqk\subseteq\UqLg$, associated to $(X,\tau)$, with parameters $(\Parc,\Pars)$ \,;
		\item a QSP admissible twisting operator $\psi\in\Aut(\UqLg)$ (cf.~\S\ref{ss:spectral-k}).
	\end{itemize}
	
\subsection{$J$--data}\label{ss:combinatorics-0}
Let $\qJ$ be a fixed index set. 
	A \emph{$J$--datum} is a choice, for each $i\in J$, of
	a finite-dimensional $\UqLg$-module $\qV{i}$ and 
	a non-zero scalar $\qX{i}\in\cor^{\times}$
such that the following properties are satisfied.
\vspace{0.25cm}
	\begin{enumerate}[label=(P\arabic*), start=1] \itemsep0.25cm
	\item \label{cond: P1}{\bf Admissibility}.  For any $i\in\qJ$, the module $\qV{i}$ is an 
	irreducible {\em real} module (\ie $\qV{i}\ten\qV{i}$ is irreducible).
	\item {\bf R-matrices}\label{cond: P2}. For any $i,j\in\qJ$, there is a non-vanishing \emph{unitary} trigonometric R-matrix 
	\[\rRM{\qV{i}\qV{j}}{w/z}:\zqV{i}{z}\ten\zqV{j}{w}\to\zqV{i}{z}\ten\zqV{j}{w}\]
	such that $\rRMv{\qV{i}\qV{i}}{1}=\id$, where $\zqV{i}{z}=(\qV{i})(z)$
	(cf.~ Theorem.~\ref{thm:rational-R} and Remarks~\ref{rmk:rational-R}).
	\item {\bf Poles}. \label{cond: P3} For any $i,j\in J$, the poles of the R-matrix $\rRM{\qV{i}\qV{j}}{w/z}$ 
	are in $q^{1/m}\fml{\bbC}{q^{1/m}}$ for some $m>0$.
\end{enumerate}

For any $i,j\in J$, we denote by $\dR{ij}(z)\in\bsF[z]$ the \emph{denominator}
of the trigonometric R-matrix $\rRM{\qV{i}\qV{j}}{z}$, \ie the polynomial of smallest degree 
such that $\dR{ij}(z)\rRM{\qV{i}\qV{j}}{z}$ is defined over $\bsF[z]$.
Moreover, we denote by $\dR{ij}\in\bbZ_{\geqslant0}$ 
the order of the pole of  $\rRM{\qV{i}\qV{j}}{z}$ at $z=\qX{j}/\qX{i}$.

\begin{remark}
	The conditions \ref{cond: P1}--\ref{cond: P3} are easily satisfied.
	\begin{itemize}\itemsep0.25cm
		\item Kirillov--Reshetikhin modules are known to satisfy \ref{cond: P1}.
		\item The existence and uniqueness of a unitary trigonometric R-matrix on a tensor product
		of finite--dimensional irreducible $\UqLg$--modules is guaranteed by Theorem~\ref{thm:rational-R}. Thus, \ref{cond: P2} is automatically satisfied
		(see also Remark~\ref{rmk:rational-R} (2)).
		\item The condition \ref{cond: P3} is related to the explicit computation of the denominators $\dR{ij}(z)$. Unfortunately, this is a difficult, largely open problem for arbitrary irreducible modules.
		However, it is well--known that \ref{cond: P3} is satisfied by \emph{good} modules, as defined by Kashiwara in \cite{kashiwara-02}, \ie irreducible finite--dimensional modules over $\UqLg$ with a bar involution, a crystal basis with a simple crystal graph, and a global basis.  \rmkend
	\end{itemize}
\end{remark}

\subsection{Enhanced $J$--data}\label{ss:combinatorics}
An \emph{enhanced} $J$--datum is a $J$--datum $(\qV{i}, \qX{i})_{i\in J}$
equipped with an involution $\qT:J\to J$ such that the following properties are satisfied. Set $\qJT=\{i\in J\,\vert\, \qT(i)=i\}$.
	\vspace{0.25cm}
\begin{enumerate}[label=(Q\arabic*), start=0] \itemsep0.25cm
	\item \label{cond: Q0} {\bf Symmetry}. For any $i\in\qJ$, we have $\qX{\qT(i)} = \qX{i}^{-1}$, and there is an isomorphism of $\Uqk$--modules $\qV{\qT(i)} \simeq \psi^*(\qV{i})$.
	\item \label{cond: Q1}{\bf Admissibility}.  For any $i\in\qJ$, there is an isomorphism of $\Uqk$--modules $(\psi^2)^*(\qV{i})\simeq\qV{i}$. 
	\item {\bf $\qT$-invariance}. \label{cond: Q2} For any $i\in\qJT$, we have $\qX{i}^2=1$,  and the module $\qV{i}$
	is \emph{QSP irreducible} (\ie it is irreducible under restriction to $\Uqk$, cf. \S\ref{ss:rational-k}), and there is an isomorphism of $\Uqk$--modules $\psi^*(\qV{i})\simeq\qV{i}$.
	\item {\bf K-matrices}\label{cond: Q3}. For any $i\in\qJ$, there is a non-vanishing \emph{unitary} K-matrix, \ie a QSP intertwiner 
	\[ \rKM{\qV{i}}{z}:\zqV{i}{z}\to\qV{\qT(i)}_{1/z}\simeq\psi^*(\qV{i})_{1/z}\,,\]
	satisfying Cherednik's reflection equation \eqref{eq:rational-tw-RE}
	and the unitarity condition \eqref{eq:unitarity-k}
	(cf.~Theorem~\ref{thm:unitary-k}).
	Moreover, if $i\in\qJT$, then $\rKM{\qV{i}}{1}=\id$.
	\item {\bf Poles}. \label{cond: Q4} For any $i\in J$, the poles of the K-matrix $\rKM{\qV{i}}{z}$ are 
	in $q^{1/m}\fml{\bbC}{q^{1/m}}$ for some $m>0$.
\end{enumerate} 

For any $i,j\in J$, we denote by $\dK{i}(z)\in\bsF[z]$ the denominator of the trigonometric K-matrix $\rKM{\qV{i}}{z}$ and by $\dK{i}\in\bbZ_{\geqslant0}$ 
the order of the pole of $\rKM{\qV{i}}{z}$ at $z=\qX{i}$.

\begin{remark}\label{rmk:comb-model}
	As before, the conditions \ref{cond: Q0}--\ref{cond: Q3} are easily satisfied.
	\begin{itemize}\itemsep0.25cm
		\item It is useful to observe that the condition \ref{cond: Q0} can be  satisfied \emph{by construction}. Namely, given an index set $J$ with
		an involution $\theta$, choose a representative for any $\qT$-orbit and
		consider the corresponding partition $\qJ=\qJp\sqcup\qJT\sqcup\qJm$
		with $\theta(\qJp)=\qJm$. Then, choose, for any $i\in(\qJp\sqcup\qJT)$, a finite-dimensional irreducible real
		$\UqLg$-module $\qV{i}$ and a non-zero scalar $\qX{i}\in\cor^{\times}$. 
		Finally, for any $\qT(i)\in\qJm$, set
		$\qV{\qT(i)}\coloneqq\psi^*(\qV{i})$ and $\qX{\qT(i)}\coloneqq\qX{i}^{-1}$.
		\item The requirement $(\psi^2)^*(V)\simeq V$ in \ref{cond: Q1} 
		can easily be overcome, since it is always possible to choose 
		involutive twisting operator $\psi$, \ie $\psi=\omega\circ\tau$, see~\cite[Ex. 3.6.3 (2)]{appel-vlaar-22}. With this choice, Kirillov--Reshetikhin modules satisfy also the condition $\psi^*(V)\simeq V$ in \ref{cond: Q2} (up to a shift, see \cite[Thm.~7.8.1]{appel-vlaar-22}). Note also that \ref{cond: Q1}  follows automatically
		whenever \ref{cond: Q0} is satisfied.
		\item By Theorem~\ref{thm:unitary-k}, the condition $(\psi^2)^*(V)\simeq V$ guarantees the existence of a unitary K-matrix. Thus, \ref{cond: Q3} is automatically satisfied.
	\end{itemize}
	The condition \ref{cond: Q4} is, however, harder to verify, since the properties of the poles of trigonometric K-matrices 
	are at the moment largely unknown. \rmkend
\end{remark}

\subsection{Enhanced $J$--quivers}

In \cite[\S 3]{kang-kashiwara-kim-18}, Kang, Kashiwara, and Kim 
defined a quiver attached to a $J$--datum, which we refer to as the associated $J$--quiver. We extend their
construction to a quiver with a framing and a contravariant involution, 
naturally associated to an enhanced $J$--datum.

\begin{definition}\label{def:quiver}
	\hfill
	\begin{enumerate}\itemsep0.25cm
	\item 
	Let $(\qV{i}, \qX{i})_{i\in J}$ be a $J$--datum. 
	The associated \emph{$J$--quiver} $\Gamma$ (c.f.\ \cite[\S 3]{kang-kashiwara-kim-18}) is the quiver defined as follows:
	\begin{itemize}\itemsep0.25cm
		\item the set of vertices is $J$;
		\item for any $i,j\in J$, there are $\dR{ij}$ arrows from $i$ to $j$.
	\end{itemize}
	\item 
	Let $(\qV{i}, \qX{i})_{i\in J}$ be an enhanced $J$--datum
	with involution $\qT$. The associated \emph{enhanced} $J$--quiver 
	is the $J$--quiver associated to the underlying $J$--datum, additionally 
	equipped with the framing dimension vector ${\bm\lambda}\in\N[J]$
	given by ${\bm\lambda}(i)=\dK{\qT(i)}$.
	\end{enumerate}
\end{definition}


The enhanced $J$--quiver has several convenient properties, which are easily proved.

\begin{prop}\label{prop:J-quiver}
	Let $\qG$ be an enhanced $J$--quiver. Then the following holds.
	\begin{enumerate}\itemsep0.25cm
		\item The quiver $\qG$ has neither loops nor cycles.
		\item If ${\bm\lambda}(i) \neq 0$ then ${\bm\lambda}(\theta(i)) = 0$. In particular, ${\bm\lambda}(i)=0$
		for $i\in\qJT$.
		\item The involution $\qT$ on $J$ lifts to a contravariant involution
		of the quiver $\qG$.
	\end{enumerate}
\end{prop}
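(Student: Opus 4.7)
My plan is to derive each of the three assertions from an analysis of the pole locations of the normalized R- and K-matrices attached to the enhanced $J$-datum. For part~(1), a loop at $i$ corresponds to $\dR{ii}\geqslant 1$, i.e., to a pole of $\rRM{\qV{i}\qV{i}}{z}$ at $z=\qX{i}/\qX{i}=1$; since $\qV{i}$ is real by~\ref{cond: P1}, condition~\ref{cond: P2} yields $\rRMv{\qV{i}\qV{i}}{1}=\id$, whence $\dR{ii}=0$. To rule out cycles I would use a valuation argument: if $i_1\to\cdots\to i_n\to i_1$ is a cycle in $\qG$, then each arrow $i_k\to i_{k+1}$ encodes a pole of $\rRM{\qV{i_k}\qV{i_{k+1}}}{z}$ at $z=\qX{i_{k+1}}/\qX{i_k}$ which, by~\ref{cond: P3}, lies in the maximal ideal $q^{1/m}\fml{\bbC}{q^{1/m}}$ and so has strictly positive $q$-adic valuation; however, their product telescopes to $\qX{i_1}/\qX{i_1}=1$, a unit, which is the desired contradiction.

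For part~(2), the strategy is to combine the unitarity~\eqref{eq:unitarity-k}, namely $\rKM{V}{z}^{-1}=\rKM{\psi^*(V)}{z^{-1}}$, with the symmetry~\ref{cond: Q0}, which gives $\psi^*(\qV{i})\simeq\qV{\theta(i)}$. Assume $\bm\lambda(i)=\dK{\theta(i)}>0$, so that $\rKM{\qV{\theta(i)}}{z}$ has a pole of order $\dK{\theta(i)}$ at $z=\qX{\theta(i)}$; since the K-matrix is non-vanishing, its inverse has a zero of the same order there. By unitarity this inverse equals $\rKM{\qV{i}}{z^{-1}}$, and substituting $w=z^{-1}$ shows that $\rKM{\qV{i}}{w}$ has a zero at $w=\qX{\theta(i)}^{-1}=\qX{i}$ and is therefore regular there. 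Hence $\dK{i}=\bm\lambda(\theta(i))=0$. The special case $i\in\qJT$ follows by applying the same argument with $\theta(i)=i$.

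For part~(3), the core step will be to establish the equality $\dR{ij}=\dR{\theta(j),\theta(i)}$, after which the involution on $\Omega$ is built as follows: for each unordered pair $\{(i,j),(\theta(j),\theta(i))\}$ with $j\neq\theta(i)$, fix any bijection between the two arrow sets and extend $\theta$ by its inverse on the opposite side; on arrows $i\to\theta(i)$, set $\theta$ equal to the identity, which enforces axiom~(2) of Definition~\ref{def: contr-inv} (axiom~(1) being then immediate). To prove the equality, I would work in the canonical setup of Remark~\ref{rmk:comb-model}(1), in which $\qV{\theta(k)}=\psi^*(\qV{k})$ as $\UqLg$-modules. Because $\psi$ is an algebra automorphism commuting with the grading shift (by $\tau$-invariance), applying $\psi\ten\psi$ to the universal R-matrix yields an operator with the same pole orders; a careful tracking of the anti-comultiplicative behaviour of $\psi$, inherited from the Chevalley factor in $\tsatq$, identifies this operator, up to scalar, with the normalized R-matrix for the pair $(\qV{\theta(j)},\qV{\theta(i)})$. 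The claim then follows from $\qX{\theta(i)}/\qX{\theta(j)}=\qX{j}/\qX{i}$.

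The hardest point will be part~(3): while~\ref{cond: Q0} only supplies a $\Uqk$-module isomorphism $\qV{\theta(i)}\simeq\psi^*(\qV{i})$, the R-matrix comparison requires a $\UqLg$-module isomorphism. In the canonical construction this is automatic, but in general one must carefully track how $\psi$ interacts with both the coproduct and the grading shift. Once this bookkeeping is in place, the pole-order comparison reduces to the elementary fact that an algebra automorphism preserves pole orders of operator-valued rational functions.
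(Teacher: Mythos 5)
Your proposal follows essentially the same route as the paper for all three parts. For part~(1), the paper likewise kills loops by observing that reality of $\qV{i}$ together with~\ref{cond: P2} forces $\rRMv{\qV{i}\qV{i}}{1}=\id$, and dismisses cycles with the bare remark that condition~\ref{cond: P3} prevents them; your valuation argument (positive $q$-adic valuation of each ratio $\qX{i_{k+1}}/\qX{i_k}$, whose product telescopes to a unit) is exactly the reasoning the paper leaves implicit. Part~(2) in the paper is also a one-liner, "follows similarly from the unitarity condition on the K-matrix~\ref{cond: Q3}," which is the content you unpack. For part~(3), the paper records the identity
\[
(\rRM{\psi^*(\qV{i})\,\psi^*(\qV{j})}{w/z})_{21}=(F^{-1}_{\qV{i}\qV{j}})_{21}\circ\rRM{\qV{j}\qV{i}}{z/w}\circ F_{\qV{j}\qV{i}},
\]
with $F$ a constant Drinfeld twist, and deduces $\dR{\qT(i)\qT(j)}=\dR{ji}$; this is precisely what you propose via the anti-comultiplicative behaviour of $\psi$.

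You correctly flag a point the paper glosses over: condition~\ref{cond: Q0} supplies only a $\Uqk$-module isomorphism $\qV{\qT(i)}\simeq\psi^*(\qV{i})$, while identifying $\dR{\qT(i)\qT(j)}$ with the pole of $\rRM{\psi^*(\qV{i})\,\psi^*(\qV{j})}{z}$ needs that identification at the $\UqLg$-level. In the canonical set-up of Remark~\ref{rmk:comb-model}(1) this is by construction, and the paper silently relies on it; naming it is a genuine improvement. Your explicit construction of the involution on arrows (pairing $\Omega_{i,j}$ with $\Omega_{\qT(j),\qT(i)}$ and declaring $\theta$ to fix each arrow $i\to\qT(i)$ so that axiom~(2) of Definition~\ref{def: contr-inv} holds) also fills in a step the paper takes for granted.

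The one step worth flagging for care is in part~(2): for matrix-valued rational functions, "non-vanishing" of the leading Laurent coefficient at a pole does \emph{not} by itself prevent the inverse from also having a pole at the same point (consider $\mathrm{diag}\bigl((z-a)^{-1},\,(z-a)\bigr)$). To conclude that $\rKM{\qV{i}}{z}$ is regular at $\qX{i}$ from the pole of $\rKM{\qV{\qT(i)}}{z}$ at $\qX{\qT(i)}$, one needs more than the phrase "non-vanishing": effectively one uses that $\rKM{V}{z}$ factors as a scalar times a polynomial operator $\trKM{V}{z}$ and that unitarity forces $\trKM{\psi^*(V)}{z^{-1}}\circ\trKM{V}{z}$ to be a scalar, combined with the pole-location constraint from~\ref{cond: Q4}. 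Since the paper's own proof of part~(2) offers no more detail than yours, this is a shared point of terseness rather than a defect relative to the paper, but if you want an airtight argument you should address it.
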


\begin{proof}
	Part (1) is proved as in \cite{kang-kashiwara-kim-18}. 
	Namely, since $\qV{i}$ is real, $\rRM{\qV{i}\qV{i}}{z}$ has no pole
	at $z=1$, thus $\dR{ii}=0$. The condition \ref{cond: P3} then guarantees that no cycle can appear.
	Part (2) follows similarly from the unitarity
	condition on the K-matrix \ref{cond: Q3}. 
	For part (3), it is enough to observe that, by definition of the twisting operator
	$\psi$, see \cite[\S2]{appel-vlaar-20}, one has
	\begin{equation}
		(\rRM{\psi^*(\qV{i})\psi^*(\qV{j})}{w/z})_{21}=(F^{-1}_{\qV{i}\qV{j}})_{21}\circ\rRM{\qV{j}\qV{i}}{z/w}\circ F_{\qV{j}\qV{i}}
	\end{equation}
	where $F$ is a (constant) Drinfeld twist in $\UqLg$. Therefore,
	$\dR{\qT(i)\qT(j)}=\dR{ji}$ and the result follows.
\end{proof}

%
	
\subsection{Enhanced $J$--quivers of Dynkin type}\label{ss:ex-prelim}
It is well--known that every quiver of Dynkin type can be realized as the $J$--quiver
associated to a $J$--datum for a Lie algebra $\mathfrak{g}$ of the same type (see, \eg \cite{kang-kashiwara-kim-15, fujita-20, fujita-22, naoi}). 
The analogue result for enhanced $J$--quiver is much more restrictive. 
By Proposition~\ref{prop:J-quiver} (3), every enhanced $J$--quiver is naturally
equipped with a contravariant involution $\qT$. However, the latter exists only 
in the cases of Dynkin quivers of type $\sfA$, and affine Dynkin quivers of type 
$\sfA$ and $\sfD$. 

In \S \ref{ex:1}--\ref{ex:3}, we provide an explicit realisation as enhanced $J$--quivers of the following three examples: the linearly oriented quiver
of type $\sfA$; the bipartite quiver of type $\sfA$; the linearly oriented quiver of affine type $\sfD$. 

We briefly summarize our approach. In analogy with \cite{kang-kashiwara-kim-18, kang-kashiwara-kim-15}, 
the $J$--datum $(\qV{i}, \qX{i})_{i\in J}$ is given entirely in terms of the {fundamental representations}  $\sfV_{\omega_i}$ ($i\in I$) of the quantum affine algebra $\UqLg$ of the same type.\footnote{We shall consider in fact only \emph{small} fundamental representations, \ie
	irreducible under restriction to $\Uqg$. In particular, these are isomorphic to their finite type counterparts as $\Uqg$--modules.} 
The definition of the enhanced $J$--datum is quite subtle. First, we equip $J$ with the unique contravariant involution $\qT$. Then, we carefully choose the QSP subalgebra in order to satisfy the condition \ref{cond: Q0}. We proceed as follows.
\begin{itemize}\itemsep0.25cm 
	\item We fix a $\tau$--restrictable QSP subalgebra $\Uqk\subset\UqLg$ with twisting operator $\psi=\omega\circ\tau$ (cf.~\S\ref{ss:K-mx-KR}). By Theorem~\ref{thm:kr-k} (1), for any $i\in J$, we obtain a trigonometric K-matrix 
	\[
	\qV{i}(z)\to(\eta_0\tau)^*(\qV{i})(z^{-1})
	\]
	where $\eta_0$ is the extension of the opposition involution $\oi_I$.
	
	\item In type $\sfA$, the contravariant involution $\qT$ satisfies 
	$\qV{\qT(i)}=\qV{i}$. Therefore, if $\tau=\eta_0$, the symmetry condition
	\ref{cond: Q0} is automatically satisfied
	\footnote{Recall that \ref{cond: Q0} requires that, for any $i\in J$, $\qV{\qT(i)}\simeq\psi^*(\qV{i})$.}. On the other hand, if $\tau=\id$, 
	the same condition fails, since $\eta_0\neq\id$ and 
	$\eta_0^*(\qV{i})\not\simeq\qV{i}$ \footnote{This corresponds to the fact that
	the fundamental representations are not self--dual in type $\sfA$.}.
	
	\item In affine type $\widehat{\sfD}_N$, we consider a $J$--datum involving only the vector representation $\sfV_{\omega_1}$ and the two spin representations $\sfV_{\omega_{N-1}}$ and $\sfV_{\omega_{N}}$. As before, the contravariant 
	involution $\qT$ satisfies $\qV{\qT(i)}=\qV{i}$. When $N$ is even, $\eta_0=\id$ and we consider $\tau$--restrictable QSP subalgebras with $\tau=\id=\eta_0$. Thus, \ref{cond: Q0} is automatically satisfied in this case. On the other hand, when $N$ is odd, $\eta_0\neq\id$ and one has $\eta_0^*(\sfV_{\omega_1})\simeq\sfV_{\omega_1}$, but $\eta_0^*(\sfV_{\omega_{N-1}})\simeq\sfV_{\omega_N}$. Thus, in order to satisfy \ref{cond: Q0}, in this case we can only consider QSP subalgebras with $\tau=\eta_0$.
	
	\item In affine type $\widehat{\sfD}_N$ with $N$ odd, one can consider a simple
	modification of the $J$--datum such that the two spin representations correspond to each other through $\qT$. By the same argument, \ref{cond: Q0}
	holds for any QSP subalgebra with $\tau=\id$
	(cf.~Remark~\ref{rmk:DN-odd}).
\end{itemize}

Finally, the enhanced $J$--datum is obtained by $\qT$ together with the 
trigonometric K-matrices given by Theorem~\ref{thm:kr-k} with respect to
the QSP subalgebra chosen as above.

\begin{remark}
In the examples \S\ref{ex:2} and \S\ref{ex:3} below, we do not include a description
of the framing\footnote{For the example \S\ref{ex:1}, the framing is described in \S\ref{ss:AIII-functor} together with an explicit expression of the trigonometric K-matrix for the fundamental representation in type $\mathsf{AIII}$.}. This is due to the fact that, with the exception of the first 
fundamental representation, an explicit expression for the poles of trigonometric K-matrices is at the moment largely unknown. Even though some progress towards more general Kirillov--Reshetikhin modules has recently been made in \cite{kusano-okado-watanabe-22} from the point of view of crystal combinatorics, no explicit formula for the poles has yet been derived. Nevertheless, the poles are expected to
depend rationally on the parameters $(\Parc,\Pars)$ defining the QSP subalgebra
(see, \eg \cite{regelskis-vlaar-16} or also \S\ref{ss:AIII-functor}).
In this case, the condition \ref{cond: Q4} would always be satisfied for suitable choices of parameters, yielding a generically trivial framing. 
\rmkend
\end{remark}

\subsection{Example: the linearly oriented quiver of type $\mathsf{A}$}\label{ex:1}
This example is discussed in details in \S\ref{ss:AIII-functor} in the case of a (not necessarily $\tau$--restrictable) quasi--split QSP subalgebra of type $\mathsf{AIII}$. 
We provide here a brief summary in greater generality.

Let $\g=\mathfrak{sl}_{N+1}$ and $\sfV=\sfV_{\omega_1}$. 
Note that, as a $\Uqg$--module, $\sfV$ is isomorphic to the vector represententation. Set $J\coloneqq \bbZ_{\scsop{odd}}$ and consider the 
involution 
$\qT$ on $J$ given by $\qT(n)=-n$.\footnote{In particular, $\qJT=\emptyset$. The case with $J=\bbZ_{\scsop{even}}$ and $\qJT=\{0\}$ is analogous, so we omit it.}
For any $i\in J$, set 
\[\qV{n}\coloneqq\frep{}\qquad\mbox{and}\qquad \qX{n}=q^{n}\,.\]
First, we observed that the conditions \ref{cond: P1}--\ref{cond: P3} are clearly satisfied.
In particular, \ref{cond: P3} holds, since the R--matrix $\rRMv{\frep{}\frep{}}{z,w}$ is known to have only one simple pole at $w/z=q^2$, see \eqref{eq: R mat fund1}.
The resulting $J$--quiver with $\qT$ is in Figure~\ref{fig:Ainf}, where we use the notation
$(\omega_i, p)$ to indicate $(\sfV_{\omega_i}, p)$. This is the main example considered in \cite{kang-kashiwara-kim-18}.\\

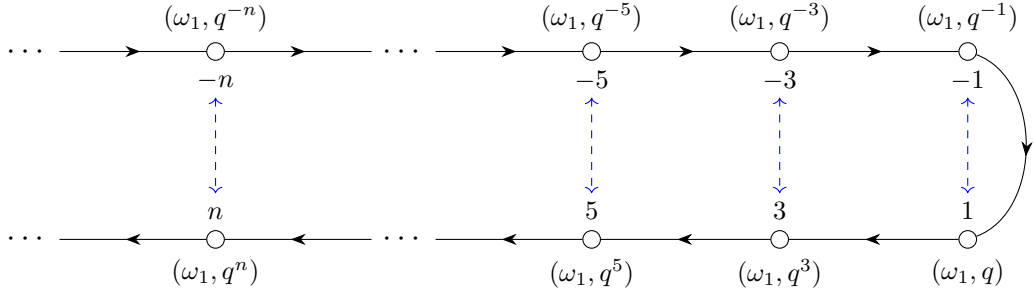
\begin{figure}
	\begin{center} 
		\begin{tikzpicture}[decoration={markings, mark= at position 0.55 with {\arrow{Stealth[length=2mm]}}}]
			
			\node (NN) at (-5,1.25) [draw, circle, fill=white, scale=0.6, label=below:{\footnotesize$-n$}, label=above:{\footnotesize$(\omega_1,q^{-n})$}] {}; 
			
			
			\node (N1) at (0,1.25) [draw, circle, fill=white, scale=0.6, label=below:{\footnotesize$-5$}, label=above:{\footnotesize$(\omega_1,q^{-5})$}] {}; 
			
			\node (N2) at (2.5,1.25) [draw, circle, fill=white, scale=0.6, label=below:{\footnotesize$-3$}, label=above:{\footnotesize$(\omega_1,q^{-3})$}] {}; 
			\node (N3) at (5,1.25) [draw, circle, fill=white, scale=0.6, label=below:{\footnotesize$-1$}, label=above:{\footnotesize$(\omega_1,q^{-1})$}] {}; 
			\node(N4) at (5,-1.25) [draw, circle, fill=white, scale=0.6, label=above:{\footnotesize$1$}, label=below:{\footnotesize$(\omega_{1},q)$}] {};
			\node (N5) at (2.5,-1.25) [draw, circle, fill=white, scale=0.6, label=above:{\footnotesize$3$}, label=below:{\footnotesize$(\omega_{1},q^3)$}] {};
			
			\node (N6) at (0,-1.25) [draw, circle, fill=white, scale=0.6, label=above:{\footnotesize$5$}, label=below:{\footnotesize$(\omega_{1},q^5)$}] {};
			
			
			\node (NX) at (-5,-1.25) [draw, circle, fill=white, scale=0.6, label=above:{\footnotesize$n$}, label=below:{\footnotesize$(\omega_{1},q^n)$}] {};
			
			\node (D0) at (-7.5,1.25) {$\cdots$};
			\draw[postaction={decorate}] (D0)--(NN);
			\node (DX) at (-7.5,-1.25) {$\cdots$};
			\draw[postaction={decorate}] (NX)--(DX);
			
			\node (D1) at (-2.5,1.25) {$\cdots$};
			\draw[postaction={decorate}] (NN)--(D1);
			\node (D2) at (-2.5,-1.25) {$\cdots$};
			\draw[postaction={decorate}] (D2)--(NX);
			
			\draw[postaction={decorate}] (D1)--(N1);
			\draw[postaction={decorate}] (N1)--(N2);
			\draw[postaction={decorate}] (N2)--(N3);
			\draw (N3) edge[postaction={decorate}, bend left=70] (N4); 
			\draw[postaction={decorate}] (N4)--(N5); 
			\draw[postaction={decorate}] (N5)--(N6); 
			\draw[postaction={decorate}] (N6)--(D2);

			\draw (N1) edge[blue, dashed, shorten >=14pt, shorten <=14pt, <->] (N6);
			\draw (N2) edge[blue, dashed, shorten >=14pt, shorten <=14pt, <->] (N5);
			\draw (N3) edge[blue, dashed, shorten >=14pt, shorten <=14pt, <->] (N4);
			\draw (NN) edge[blue, dashed, shorten >=14pt, shorten <=14pt, <->] (NX);
			
		\end{tikzpicture}
	\end{center}
	\caption{The case of the linearly oriented $\sfA_\infty$ quiver.}\label{fig:Ainf}
\end{figure}

We fix a QSP subalgebra $\Uqk\subset\UqLg$ with $\tau=\eta_0$ and we consider the twisting operator $\psi=\omega\circ\tau$. Then, by Theorem~\ref{thm:kr-k},
the conditions \ref{cond: Q0}, \ref{cond: Q1}, \ref{cond: Q2} and \ref{cond: Q3} 
are automatically satisfied. 
It remains to verify the condition \ref{cond: Q4}. 
In \cite{regelskis-vlaar-18}, Regelskis and Vlaar computed explicitly the trigonometric K--matrices for $\sfV$. Therefore, 
\ref{cond: Q4} holds, provided the QSP parameters are carefully chosen, as outlined in \S\ref{ss:AIII-functor}.
%
%

\subsection{Example: the bipartite quiver of type $\mathsf{A}$}\label{ex:2}

We shall now discuss the enhancement of an example from \cite{kang-kashiwara-kim-15} involving multiple fundamental representations.
Let $\mathfrak{g} = \mathfrak{sl}_{N+1}$. Let $J = \{1, \cdots, N\}$ be a set with 
the involution $\theta(i) = N+1-i$. We consider $J$ as the set of vertices of a quiver $Q$ of type $\mathsf{A}_N$, with an edge between $i$ and $j$ if and only if $|i-j| = 1$, and equipped with a bipartite (\ie alternating) orientation. 

Let $\Uqk\subset\UqLg$ be a QSP subalgebra with $\tau=\eta_0$ and twisting operator $\psi=\tau\circ\omega$. We consider the corresponding trigonometric K-matrices on the fundamental representations $\sfV_{\omega_i}$ ($i\in I$) given by Theorem~\ref{thm:kr-k}. 

\Prop
Set $p=-q$. 
\begin{enumerate}
\item The quiver $Q$ admits a contravariant involution if and only if $N$ is even. 
\item Let $N = 2m$ and set, for any $i \in J$, 
\begin{align*} 
V(i) &= \left\{ \begin{array}{ll}
\frep{\omega_i} & \mbox{if } i \mbox{ is a source in } Q \\
\frep{\omega_{\theta(i)}} & \mbox{if } i \mbox{ is a sink in } Q \\
\end{array} \right. \\ 
X(i) &=  \left\{ \begin{array}{ll}
p^{m} & \mbox{if } i \mbox{ is a source in } Q \\
p^{-m} & \mbox{if } i \mbox{ is a sink in } Q \\
\end{array} \right.  
\end{align*} 
If the condition \ref{cond: Q4} is satisfied, the datum $(V(i), X(i))_{i\in J}$
defines an enhanced $J$-datum with respect to the involution $\qT$. The resulting $J$-quiver $\Gamma$ is isomorphic to~$Q^{\textrm{rev}}$, \ie the quiver obtained from $Q$ by reversing all the arrows, see Figure~\ref{fig:Abip}. 
\end{enumerate}

\usetikzlibrary{arrows.meta}
\usetikzlibrary{ decorations.markings}

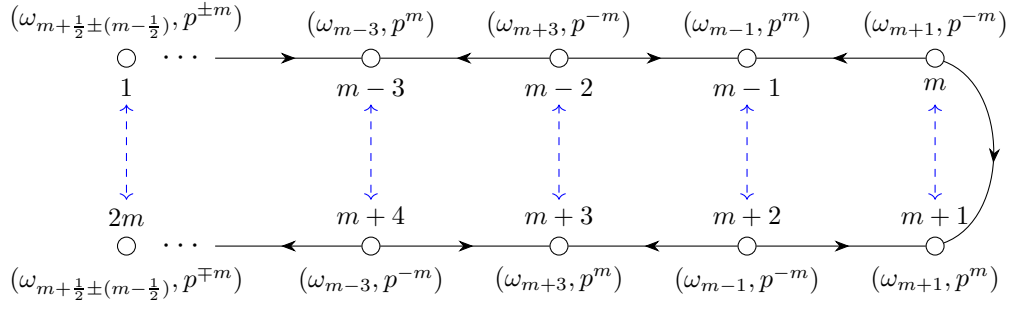
\begin{figure}
\begin{center} 
\begin{tikzpicture}[decoration={markings, mark= at position 0.55 with {\arrow{Stealth[length=2mm]}}}]

\node (NN) at (-5.75,1.25) [draw, circle, fill=white, scale=0.6, label=below:{\footnotesize$1$}, label=above:{\footnotesize$(\omega_{m+\frac{1}{2}\pm(m-\frac{1}{2})},p^{\pm m})$}] {}; 

\node (N0) at (-2.5,1.25) [draw, circle, fill=white, scale=0.6, label=below:{\footnotesize$m-3$}, label=above:{\footnotesize$(\omega_{m-3},p^{m})$}] {}; 

\node (N1) at (0,1.25) [draw, circle, fill=white, scale=0.6, label=below:{\footnotesize$m-2$}, label=above:{\footnotesize$(\omega_{m+3},p^{-m})$}] {}; 

\node (N2) at (2.5,1.25) [draw, circle, fill=white, scale=0.6, label=below:{\footnotesize$m-1$}, label=above:{\footnotesize$(\omega_{m-1},p^{m})$}] {};
\node (N3) at (5,1.25) [draw, circle, fill=white, scale=0.6, label=below:{\footnotesize$m$}, label=above:{\footnotesize$(\omega_{m+1},p^{-m})$}] {};
\node(N4) at (5,-1.25) [draw, circle, fill=white, scale=0.6, label=above:{\footnotesize$m+1$}, label=below:{\footnotesize$(\omega_{m+1},p^{m})$}] {};
\node (N5) at (2.5,-1.25) [draw, circle, fill=white, scale=0.6, label=above:{\footnotesize$m+2$}, label=below:{\footnotesize$(\omega_{m-1},p^{-m})$}] {};

\node (N6) at (0,-1.25) [draw, circle, fill=white, scale=0.6, label=above:{\footnotesize$m+3$}, label=below:{\footnotesize$(\omega_{m+3},p^{m})$}] {};

\node (N7) at (-2.5,-1.25) [draw, circle, fill=white, scale=0.6, label=above:{\footnotesize$m+4$}, label=below:{\footnotesize$(\omega_{m-3},p^{-m})$}] {};

\node (NX) at (-5.75,-1.25) [draw, circle, fill=white, scale=0.6, label=above:{\footnotesize$2m$}, label=below:{\footnotesize$(\omega_{m+\frac{1}{2}\pm(m-\frac{1}{2})},p^{\mp m})$}] {};

\draw[postaction={decorate}] (N1)--(N0);
\draw[postaction={decorate}] (N1)--(N2);
\draw[postaction={decorate}] (N3)--(N2);
\draw (N3) edge[postaction={decorate}, bend left=70] (N4); 
\draw[postaction={decorate}] (N5)--(N4); 
\draw[postaction={decorate}] (N5)--(N6); 
\draw[postaction={decorate}] (N7)--(N6);

\node (D0) at (-5,1.25) {$\cdots$};
\draw[postaction={decorate}] (D0)--(N0);
\node (DX) at (-5,-1.25) {$\cdots$};
\draw[postaction={decorate}] (N7)--(DX);

\draw (N0) edge[blue, dashed, shorten >=14pt, shorten <=14pt, <->] (N7);
\draw (N1) edge[blue, dashed, shorten >=14pt, shorten <=14pt, <->] (N6);
\draw (N2) edge[blue, dashed, shorten >=14pt, shorten <=14pt, <->] (N5);
\draw (N3) edge[blue, dashed, shorten >=14pt, shorten <=14pt, <->] (N4);
\draw (NN) edge[blue, dashed, shorten >=14pt, shorten <=14pt, <->] (NX);

\end{tikzpicture}
\end{center}
\caption{The case of the bipartite $\sfA_{2m}$ quiver.}\label{fig:Abip}
\end{figure}
\enprop

\begin{proof} 
%
(1) The existence of an involution when $N=2m$ is obvious from Figure~\ref{fig:Abip}.
Instead, when $N=2m+1$, it is enough to observe that the central vertex $m+1$ is bound 
to be fixed by the involution. Since $m+1$ is supposed to be either a source or a sink, the 
involution cannot be contravariant.

(2) The proof relies on some results from \cite{kang-kashiwara-kim-15}. 
Let $w$ be a Coxeter element adapted to $Q$. Without loss of generality, let us assume that $m$ is odd, and the central arrow sends $m+1$ to $m$. In that case even vertices are sources and odd vertices are sinks, and 
\[ w = s_2s_4 \cdots s_{2m} s_1 s_3 \cdots s_{2m-1}.\] 
Choose the height function 
\[
\xi_i = \left\{ \begin{array}{ll}
m+1 & \mbox{if } i \mbox{ is odd,} \\
m & \mbox{if } i \mbox{ is even.}
\end{array} \right.
\]

Let $\Pi_0$ and $\Delta_+$ denote the sets of simple and positive roots of $\mathfrak{g}$, respectively. 
 By \cite[Thm.~4.3.1]{kang-kashiwara-kim-15}, there exists a $J$-datum such that the corresponding $J$-quiver $\Gamma$ is isomorphic to $Q^{rev}$. To compute this datum, we need to consider the repetition quiver of $Q$, and a certain bijection $\phi \colon \widehat{I}_0 \to \widehat{\Delta}$, where $\widehat{I}_0 = \{1, 3, \cdots, m\} \times 2\Z \sqcup \{2,4,\cdots,2m\} \times \Z_{\text{odd}}$ and $\widehat{\Delta} = \Delta_+ \times \Z$. The bijection is defined recursively according to the rules in \cite[\S 3.1]{kang-kashiwara-kim-15}. To determine the $J$-datum, we need to find the preimage of $\Pi_0 \times \{0\}$ under $\phi$. 

For any source vertex $i$, we have, by definition, $\phi(i, m) = (\alpha_i, 0)$. We also claim that $w^m(\alpha_i) = \alpha_{\theta(i)}$, so that, by the recursive rule in the definition of $\phi$, we have $\phi(i, -m) = (\alpha_{\theta(i)}, 0)$. Then, 
we have
\[
V(i) = \operatorname{pr}_1 \circ \phi^{-1}(\alpha_i,0), \qquad 
X(i) = \operatorname{pr}_2 \circ \phi^{-1}(\alpha_i,0),
\]
which, according to \cite[Thm.~4.3.1]{kang-kashiwara-kim-15}, defines the desired $J$-datum.

It remains to prove the claim. Write $i=2j$ and assume, without loss of generality, that $i < m$. An easy calculation shows that $w^j(\alpha_i) = \alpha_1 + \cdots + \alpha_{2i}$, and that the subsequent $m-i$ applications of $w$ shift the string of simple roots by $2(m-i)$ so that $w^{m-j}(\alpha_i) = \theta(w^j(\alpha_i))$. The subsequent $j$ applications of $w$ shrink the string to $\alpha_{\theta(i)}$. 

From Theorem~\ref{thm:kr-k}, the conditions \ref{cond: Q0}--\ref{cond: Q3} are automatically satisfied (see also \S\ref{ss:ex-prelim}).
\end{proof}

\subsection{Example: the quiver of affine type $\mathsf{D}$}\label{ex:3}
We conclude with the example of a quiver of type other than $\sfA$.
Let $\g=\mathfrak{so}_{2N}$. As before, let $\Uqk\subset\UqLg$ be a QSP subalgebra with $\tau=\eta_0$, and set $\psi=\tau\circ\omega$.
We shall construct an enhanced $J$--datum relying on the fundamental representations $\sfV_{\omega_1}$, $\sfV_{\omega_{N-1}}$, $\sfV_{\omega_{N}}$,
where the vertices of the affine Dynkin diagram of $\g$ are numbered as in 
Figure~\ref{fig:Daff}. Note that all three modules are small, and, as
$\Uqg$--modules, are isomorphic to the vector representation and the spin representations, respectively.

Set $J=\{0,1,\dots,N\}$ and let  $\qT$ be the involution on $J$ given by
$\qT(i)=N-i$. We consider the assignments
\begin{align*}
	V(i)=
	\begin{cases}
	\sfV_{\omega_{N-i}} & \mbox{if } i=0, 1\\
	\sfV_{\omega_i} & \mbox{if } i=N-1, N\\
	\sfV_{\omega_1} & \mbox{if } i=2, \dots, N-2
	\end{cases}
\end{align*}
and
\begin{align*}
	X(i)=
	\begin{cases}
		(-1)^Nq^{-2(N-2)} & \mbox{if } i=0, 1\\
		(-1)^Nq^{2(N-2)} & \mbox{if } i=N-1, N\\
		q^{2i-N} & \mbox{if } i=2, \dots, N-2
	\end{cases}
\end{align*}

\begin{prop}
	If the condition \ref{cond: Q4} is satisfied, the datum $(V(i), X(i))_{i\in J}$
	defines an enhanced $J$-datum with respect to the involution $\qT$. The resulting $J$-quiver is represented in Figure~\ref{fig:Daff} for $N$ odd, and in Figure~\ref{fig:Daff2} for 
	$N$ even. 
\end{prop}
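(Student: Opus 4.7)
The plan is first to invoke the general framework of \S\ref{ss:ex-prelim} and Theorem~\ref{thm:kr-k} to dispatch most of \ref{cond: Q0}--\ref{cond: Q3}, and then to read off the arrows of the $J$-quiver from the known poles of the normalised R-matrices between the fundamental $U_qL\mathfrak{so}_{2N}$-modules $\sfV_{\omega_1},\sfV_{\omega_{N-1}},\sfV_{\omega_{N}}$.

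First I would verify \ref{cond: Q0}, which is the only non-automatic piece. The scalar identity $X(\theta(i))=X(i)^{-1}$ is immediate from the definitions. For the module isomorphism $V(\theta(i))\simeq\psi^*(V(i))$, I would apply \eqref{eq:KR}, which yields, for every fundamental module, an isomorphism $\omega^*(\sfV_{\omega_j}(a))\simeq\eta_0^*(\sfV_{\omega_j})(a^{-1}\cdot c_j)$ with an explicit constant $c_j$ depending only on $j$ and $\g$. Since $\psi=\omega\circ\tau=\omega\circ\eta_0$ and $\eta_0^2=\id$, this reduces the check to computing $c_j$ for $j\in\{1,N-1,N\}$ and verifying that the chosen values of $X(i)$, in particular the signs $(-1)^N$ attached to the four spin vertices, cancel these shifts against the swap $V(i)\leftrightarrow V(\theta(i))$. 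Conditions \ref{cond: Q1}--\ref{cond: Q3} then follow from Theorem~\ref{thm:kr-k} applied to each fundamental representation, as explained in \S\ref{ss:ex-prelim}, while \ref{cond: Q4} is part of the hypothesis.

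Next I would determine the arrows of the $J$-quiver. By Definition~\ref{def:quiver}, an arrow $i\to j$ corresponds to a pole of $\rRM{V(i)V(j)}{z}$ at $z=X(j)/X(i)$, so the task reduces to computing the denominators of the normalised R-matrices between the three fundamental modules involved. Using the known tables for $D_N^{(1)}$ (due to Akasaka--Kashiwara via fusion constructions, see \cite{kang-kashiwara-kim-15} and references therein), the relevant poles are concentrated at $z=q^2$ and $z=q^{2N-2}$ in the vector-vector case, at a single value of the form $\pm q^{N-2}$ in the mixed vector-spin case, and at tabulated spectral parameters in the spin-spin case whose shape depends on the parity of $N$. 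I would then substitute $X(j)/X(i)=q^{2(j-i)}$ for middle vector-vector pairs and compute the mixed and spin-spin ratios while tracking the signs $(-1)^N$; the result is expected to match Figure~\ref{fig:Daff} when $N$ is odd and Figure~\ref{fig:Daff2} when $N$ is even.

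The main obstacle will be the bookkeeping around the four spin vertices $\{0,1,N-1,N\}$: one has to correctly match the $\theta$-action (which swaps $0\leftrightarrow N$ and $1\leftrightarrow N-1$) with the action of $\eta_0$ on isomorphism classes (which swaps $\sfV_{\omega_{N-1}}\leftrightarrow\sfV_{\omega_N}$ only when $N$ is odd), and to track how the parity of $N$ propagates through the sign $(-1)^N$ in $X$ and through the R-matrix denominators. Once this case split is carried out, no further ideas are needed.
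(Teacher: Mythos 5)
Your approach matches the paper's: determine the arrows by reading off the tabulated R-matrix denominators from \cite[Thm.~A.1.1]{kang-kashiwara-kim-15} and comparing them with the ratios $X(j)/X(i)$, while conditions \ref{cond: Q0}--\ref{cond: Q3} are dispatched via Theorem~\ref{thm:kr-k} and the discussion in \S\ref{ss:ex-prelim}. One small slip: the vector--spin denominator is $d_{1N}(z)=z-(-q)^N$, so the pole sits at $(-q)^N$ rather than at a value of the form $\pm q^{N-2}$ as you state, but since you plan to consult the tables for the actual computation this would be caught in the course of carrying it out.
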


\begin{proof}
The proof relies entirely on the explicit computation of the denominators of the
R--matrix between all fundamental representations in affine type $\sfD$, provided in 
\cite[Thm.~A.1.1]{kang-kashiwara-kim-15}. In particular, one has that 
\begin{align*}
	d_{11}(z)&=(z-q^2)(z-q^{2N-2})\,, \qquad\, d_{(N-1)(N-1)}(z)=\,\,\,\prod_{s=1}^{\lfloor \frac{N}{2}\rfloor}(z-(-q)^{4s-2})=d_{NN}(z)\,,\\
	d_{1N}(z)&=z-(-q)^N=d_{1(N-1)}(z)\,, \qquad
	d_{(N-1)N}(z)=\prod_{s=1}^{\lfloor \frac{N-1}{2}\rfloor}(z-(-q)^{4s})\,.
\end{align*}
We immediately observe that, by restriction to the vertices $J'=\{2,\dots, N-2\}$, 
the datum $(\qV{i}, \qX{i})_{i\in J'}$ gives rise to a linearly oriented quiver of 
type $\sfA_{N-3}$, as in the finite analogue of Figure~\ref{fig:Ainf}. Then, by the denominators formulae above, the spin representations
placed at the four extremal vertices are shown to a single arrow each, towards 
either incoming to $\qV{2}$ or outcoming from $\qV{N-2}$.
The resulting $J$--quiver are represented in Figures~\ref{fig:Daff} and \ref{fig:Daff2} when
$N$ is odd or even, respectively. As before, one finally observes that,
provided the condition \ref{cond: Q4} is satisfied, these assignments
give rise to an enhanced $J$--datum with respect to $\qT$.
\end{proof}

\begin{remark}\label{rmk:DN-odd}
	As mentioned in \S\ref{ss:ex-prelim}, 
	the case with $N$ odd is of further interest, since in this
	case the spin representations correspond to each other through $\eta_0$. 
	Let $\Uqk\subset\UqLg$ be a $\tau$--restrictable QSP subalgebra with $\tau=\id$ and set 
	\[
	\qT'(i)=\begin{cases}N-i & \mbox{if } i=2,\dots, N-2\\
		N-1-i & \mbox{if }  i=0, 1\\
		N+1-i & \mbox{if } i=N-1, N\end{cases}
	\]
	Then, it follows immediately from Theorem~\ref{thm:kr-k} that, provided the condition \ref{cond: Q4} is satisfied, the previous assignments give rise to an enhanced $J$--datum
	with respect to $\qT'$.
	\rmkend
\end{remark}

\begin{figure}
	\begin{center} 
		\begin{tikzpicture}[decoration={markings, mark= at position 0.55 with {\arrow{Stealth[length=2mm]}}}]
			
			\node (NN) at (-6.75,2) [draw, circle, fill=white, scale=0.6, label=below:{\footnotesize$0$}, label=above:{\footnotesize$(\omega_{2m+1}, -q^{-4m+2})$}] {}; 
			
			\node (NN1) at (-5.25,0.5) [draw, circle, fill=white, scale=0.6, label=below:{\footnotesize$1$}, label=above:{\footnotesize$(\omega_{2m}, -q^{-4m+2})$}] {}; 
			
			\node (N0) at (-2.5,1.25) [draw, circle, fill=white, scale=0.6, label=below:{\footnotesize$2$}, label=above:{\footnotesize$(\omega_1,q^{-2m+3})$}] {}; 
			
			\node (N1) at (0,1.25)  {$\cdots$}; 
			
			\node (N2) at (2.5,1.25) [draw, circle, fill=white, scale=0.6, label=below:{\footnotesize$m-1$}, label=above:{\footnotesize$(\omega_1,q^{-3})$}] {}; 
			\node (N3) at (5,1.25) [draw, circle, fill=white, scale=0.6, label=below:{\footnotesize$m$}, label=above:{\footnotesize$(\omega_1,q^{-1})$}] {}; 
			\node(N4) at (5,-1.25) [draw, circle, fill=white, scale=0.6, label=above:{\footnotesize$m+1$}, label=below:{\footnotesize$(\omega_{1},q^{})$}] {};
			\node (N5) at (2.5,-1.25) [draw, circle, fill=white, scale=0.6, label=above:{\footnotesize$m+2$}, label=below:{\footnotesize$(\omega_{1},q^{3})$}] {};
			
			\node (N6) at (0,-1.25) {$\cdots$};
			
						\node (N7) at (-2.5,-1.25) [draw, circle, fill=white, scale=0.6, label=above:{\footnotesize$2m-1$}, label=below:{\footnotesize$(\omega_{1},q^{2m-3})$}] {};
			
			\node (NX) at (-6.75,-0.5) [draw, circle, fill=white, scale=0.6, label=above:{\footnotesize$2m+1$}, label=below:{\footnotesize$(\omega_{2m+1},-q^{4m-2})$}] {};
			
			\node (NX1) at (-5.25,-2) [draw, circle, fill=white, scale=0.6, label=above:{\footnotesize$2m$}, label=below:{\footnotesize$(\omega_{2m},-q^{4m-2})$}] {};

			\draw[postaction={decorate}] (NN)--(N0);
			\draw[postaction={decorate}] (NN1)--(N0);
			\draw[postaction={decorate}] (N0)--(N1); 
			\draw[postaction={decorate}] (N1)--(N2);
			\draw[postaction={decorate}] (N2)--(N3);
			\draw (N3) edge[postaction={decorate}, bend left=70] (N4); 
			\draw[postaction={decorate}] (N4)--(N5); 
			\draw[postaction={decorate}] (N5)--(N6); 
			\draw[postaction={decorate}] (N6)--(N7); 
			
				\draw[postaction={decorate}] (N7)--(NX);
			\draw[postaction={decorate}] (N7)--(NX1);

			\draw (N0) edge[blue, dashed, shorten >=14pt, shorten <=14pt, <->] (N7);
			\draw (N2) edge[blue, dashed, shorten >=14pt, shorten <=14pt, <->] (N5);
			\draw (N3) edge[blue, dashed, shorten >=14pt, shorten <=14pt, <->] (N4);
			\draw (NN) edge[blue, dashed, shorten >=14pt, shorten <=14pt, <->] (NX);
			\draw (NN1) edge[blue, dashed, shorten >=14pt, shorten <=14pt, <->] (NX1);
		\end{tikzpicture}
	\end{center}
	\caption{The case of the linearly oriented affine $\sfD_{2m+1}$ quiver.}\label{fig:Daff}
\end{figure}
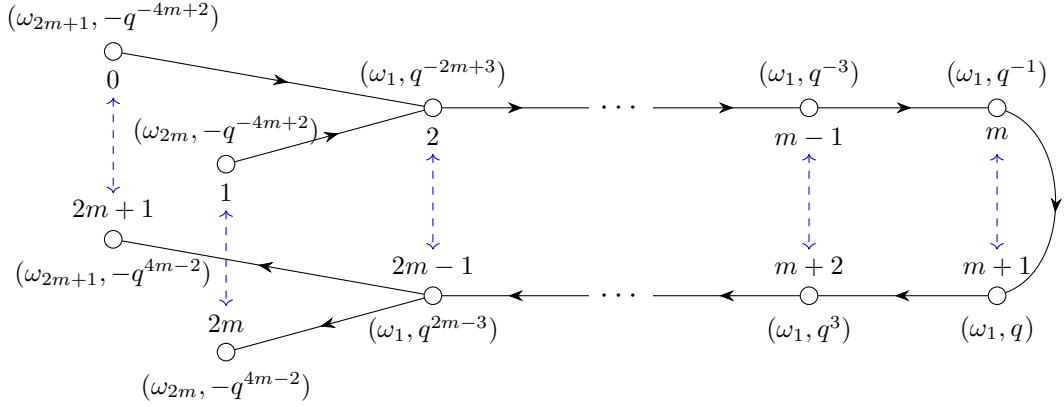

\begin{figure}
	\begin{center} 
		\begin{tikzpicture}[decoration={markings, mark= at position 0.55 with {\arrow{Stealth[length=2mm]}}}]
			
			\node (NN) at (-6.75,2) [draw, circle, fill=white, scale=0.6, label=below:{\footnotesize$0$}, label=above:{\footnotesize$(\omega_{2m}, q^{-4m+4})$}] {}; 
			
			\node (NN1) at (-5.25,0.5) [draw, circle, fill=white, scale=0.6, label=below:{\footnotesize$1$}, label=above:{\footnotesize$(\omega_{2m-1}, q^{-4m+4})$}] {}; 
			
			\node (N0) at (-2.5,1.25) [draw, circle, fill=white, scale=0.6, label=below:{\footnotesize$2$}, label=above:{\footnotesize$(\omega_1,q^{-2m+4})$}] {}; 
			
			\node (N1) at (0,1.25)  {$\cdots$}; 
			
			\node (N2) at (2.5,1.25) [draw, circle, fill=white, scale=0.6, label=below:{\footnotesize$m-2$}, label=above:{\footnotesize$(\omega_1,q^{-2})$}] {}; 
			\node (N3) at (5,1.25) [draw, circle, fill=white, scale=0.6, label=below:{\footnotesize$m-1$}, label=above:{\footnotesize$(\omega_1,q^{-2})$}] {}; 
			\node (ND) at (6.5,0) [draw, circle, fill=white, scale=0.6, label=left:{\footnotesize$m$}, label=right:{\footnotesize$(\omega_1,1)$}] {}; 
			\node(N4) at (5,-1.25) [draw, circle, fill=white, scale=0.6, label=above:{\footnotesize$m+1$}, label=below:{\footnotesize$(\omega_{1},q^{2})$}] {};
			\node (N5) at (2.5,-1.25) [draw, circle, fill=white, scale=0.6, label=above:{\footnotesize$m+2$}, label=below:{\footnotesize$(\omega_{1},q^{4})$}] {};
			
			\node (N6) at (0,-1.25) {$\cdots$};
			
			\node (N7) at (-2.5,-1.25) [draw, circle, fill=white, scale=0.6, label=above:{\footnotesize$2m-2$}, label=below:{\footnotesize$(\omega_{1},q^{2m-4})$}] {};
			
			\node (NX) at (-6.75,-0.5) [draw, circle, fill=white, scale=0.6, label=above:{\footnotesize$2m$}, label=below:{\footnotesize$(\omega_{2m},q^{4m-4})$}] {};
			
			\node (NX1) at (-5.25,-2) [draw, circle, fill=white, scale=0.6, label=above:{\footnotesize$2m-1$}, label=below:{\footnotesize$(\omega_{2m-1},q^{4m-4})$}] {};

			\draw[postaction={decorate}] (NN)--(N0);
			\draw[postaction={decorate}] (NN1)--(N0);
			\draw[postaction={decorate}] (N0)--(N1); 
			\draw[postaction={decorate}] (N1)--(N2);
			\draw[postaction={decorate}] (N2)--(N3);
			\draw (N3) edge[postaction={decorate}, bend left=40] (ND); 
			\draw (ND) edge[postaction={decorate}, bend left=40] (N4); 
			\draw[postaction={decorate}] (N4)--(N5); 
			\draw[postaction={decorate}] (N5)--(N6); 
			\draw[postaction={decorate}] (N6)--(N7); 
			
			\draw[postaction={decorate}] (N7)--(NX);
			\draw[postaction={decorate}] (N7)--(NX1);

			\draw (N0) edge[blue, dashed, shorten >=14pt, shorten <=14pt, <->] (N7);
			\draw (N2) edge[blue, dashed, shorten >=14pt, shorten <=14pt, <->] (N5);
			\draw (N3) edge[blue, dashed, shorten >=14pt, shorten <=14pt, <->] (N4);
			\draw (NN) edge[blue, dashed, shorten >=14pt, shorten <=14pt, <->] (NX);
			\draw (NN1) edge[blue, dashed, shorten >=14pt, shorten <=14pt, <->] (NX1);
		\end{tikzpicture}
	\end{center}
	\caption{The case of the linearly oriented affine $\sfD_{2m}$ quiver.}\label{fig:Daff2}
\end{figure}


\section{Boundary Schur--Weyl dualities} \label{s: boundary SW}   

\summary{In this section:
	\begin{itemize}
		\item \ref{ss:combinatorics}: the combinatorial model
		\item \ref{ss:boundary-SW}: generalized Schur-Weyl duality with boundary
		\item \ref{ss:KKK-compatibility}: compatibility with KKK
		\item \ref{ss:SW-properties}: basic properties
	\end{itemize}
}

In this section, we prove the main result of the paper, consisting in the construction of a {\em boundary} Schur-Weyl duality functor. We fix the following datum:
\begin{itemize}\itemsep0.25cm
	\item an affine Satake diagram $(X,\tau)$ with pseudo-involution $\tsat$\,;
	\item a QSP subalgebra $\Uqk\subseteq\UqLg$ with parameters $(\Parc,\Pars)$\,;
	\item a QSP admissible twisting operator $\psi\in\Aut(\UqLg)$\,;
	\item a set $J$ with an involution $\qT$\,;
	\item an enhanced $J$--datum $(V(i), X(i))_{i\in J}$\,; 
	\item the corresponding enhanced $J$--quiver $(\Gamma, \qT, \bm\lambda)$,
	see Definition~\ref{def:quiver}.
\end{itemize}
Henceforth we will consider $o$KLR algebras associated to  $(\Gamma, \qT, \bm\lambda)$ and varying dimension vectors $\beta \in \N[J]^\theta$. 
	Moreover, we fix the functions 
	\begin{equation} \label{eq: functor c-fun choice} c_{ij}(u,v) \in\fml{\cor}{u,v}, \quad c_i \in \fml{\cor}{u} \end{equation} 
	as in \eqref{eq: c-ij}--\eqref{eq: c-i}.

\subsection{Polynomial rings} \label{ss:field-completions}

As preparation for the main result, we need to introduce notation for various polynomial rings, their completions, and relate them to the polynomial rings that appeared earlier in \S\ref{ss:polynomial-rep} in the context of $o$KLR algebras. 
Let:   
	\[ \mathcal{O} := \cor[X_{\pm 1}, \cdots, X_{\pm n}], \qquad \mathcal{K} := \cor(X_1,\cdots, X_n),\]  
	where $X_{-l} = X_l^{-1}$. 
	The group $\weyl_n$ acts on $\mathcal{O}$ and $\mathcal{K}$ from the left by $w\cdot X_l = X_{w(l)}$. 

Given a self-dual dimension vector $\beta \in \N[J]^\theta$ with $\Norm{\beta}_\theta$, we also set
	\begin{alignat*}{8}
		    \Oh_{ \nu } :=&\ \fml{\cor}{X_1-X(\nu_1), \cdots, X_n-X(\nu_n)}, \qquad&   \tOhb \ \seteq& \soplus_{\nu \in \comp} \Oh_{ \nu }e(\nu), \\
		    \widehat{\mathcal{K}}_{ \nu } :=&\ \cor ((X_1-X(\nu_1), \cdots, X_n-X(\nu_n))), \qquad&  {}^\theta\widehat{\mathcal{K}}_\beta \ \seteq& \soplus_{\nu \in \comp} \widehat{\mathcal{K}}_{ \nu }e(\nu). 
	\end{alignat*}
	
	\begin{lemma}
		There is a $\weyl_n$-equivariant algebra isomorphism
		\begin{equation} \label{x-X map}
			\tbPh_{\beta} \ \isoto \ \tOhb, \qquad e(\nu) \mapsto e(\nu), \quad x_ke(\nu) \mapsto \left( \frac{X(\nu_k)}{X_k} - \frac{X_k}{X(\nu_k)} \right) e(\nu). 
		\end{equation}
	\end{lemma}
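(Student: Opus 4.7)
My plan is to define $\Phi$ componentwise. On each summand $\bPh_\nu$ I would send $e(\nu) \mapsto e(\nu)$ and $x_k \mapsto \phi_k^{(\nu)} \coloneqq X(\nu_k)/X_k - X_k/X(\nu_k)$. The first check is well-definedness: since $\phi_k^{(\nu)}$ vanishes at $X_k = X(\nu_k)$, it lies in the maximal ideal $\mathfrak{m}_\nu = (X_1 - X(\nu_1), \dots, X_n - X(\nu_n))$ of $\Oh_\nu$. Thus the substitution extends continuously in the $\mathfrak{m}_\nu$-adic topology to a $\cor$-algebra map $\bPh_\nu \to \Oh_\nu e(\nu)$, and summing over $\nu \in \comp$ yields the global map $\Phi\colon \tbPh_\beta \to \tOhb$.

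Next, to establish bijectivity on each component, I would set $y_k \coloneqq X_k - X(\nu_k)$ and Taylor-expand
\[
\phi_k^{(\nu)} = \frac{X(\nu_k)}{X(\nu_k)+y_k} - \frac{X(\nu_k)+y_k}{X(\nu_k)} = -\frac{2\,y_k}{X(\nu_k)} + O(y_k^2).
\]
Hence $\Phi$ induces an isomorphism on the cotangent space $\mathfrak{m}/\mathfrak{m}^2$. Since both $\bPh_\nu$ and $\Oh_\nu \cong \fml{\cor}{y_1,\dots,y_n}$ are regular $n$-dimensional complete local $\cor$-algebras, this suffices to conclude that $\Phi|_{\bPh_\nu}$ is an isomorphism; equivalently, one can invert the formula iteratively to solve $y_k$ as a power series in the $x_l$'s.

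For the $\weyl_n$-equivariance I would test the generators $s_0, s_1, \dots, s_{n-1}$ against $x_l e(\nu)$. The cases $1 \leqslant k \leqslant n-1$ are routine: using $(s_k \cdot \nu)_{s_k(l)} = \nu_l$ and $s_k \cdot X_l = X_{s_k(l)}$, both $\Phi(s_k \cdot x_l e(\nu))$ and $s_k \cdot \Phi(x_l e(\nu))$ reduce to $\bigl(X(\nu_l)/X_{s_k(l)} - X_{s_k(l)}/X(\nu_l)\bigr) e(s_k \cdot \nu)$. The decisive case is $s_0$ on $x_1$: here $s_0 \cdot x_1 = -x_1$, $(s_0 \cdot \nu)_1 = \theta(\nu_1)$, and the symmetry in \ref{cond: Q0} supplies the crucial identity $X(\theta(\nu_1)) = X(\nu_1)^{-1}$. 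Combined with $s_0 \cdot X_1 = X_1^{-1}$, a direct calculation shows both sides equal $\bigl(X_1 X(\nu_1) - (X_1 X(\nu_1))^{-1}\bigr) e(s_0 \cdot \nu)$; the case $l > 1$ is immediate since $s_0$ affects neither $x_l$, nor $X_l$, nor $\nu_l$.

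The main obstacle is conceptual rather than technical: one must make sense of the $s_0$-action on the $X$-side, which sends $\Oh_\nu$ to $\Oh_{s_0 \cdot \nu}$ by relocalizing the power series around the new center $X(\nu_1)^{-1}$, and verify that this shift is synchronized with the sign involution $x_1 \mapsto -x_1$ on the $o$KLR side. Condition \ref{cond: Q0} for the enhanced $J$-datum is precisely what makes the two involutions match; once this is unpacked the remaining verifications become mechanical.
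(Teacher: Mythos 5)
Your proof is correct and follows the same underlying principle as the paper's, namely that the substitution $x_k \mapsto X(\nu_k)X_k^{-1} - X(\nu_k)^{-1}X_k$ sends $x_k$ into the maximal ideal with invertible linear coefficient, so that the resulting map of completions is an isomorphism. Where the paper is concrete, you are abstract: the paper observes that $x_k$ maps to $\mathsf{f}(1-X(\nu_k)^{-1}X_k)$ with $\mathsf{f}(z)=z+z/(1-z)=2z+O(z^2)$ and concludes directly that $\mathsf{f}$ has a composition inverse, so one can solve for $1-X(\nu_k)^{-1}X_k$ in terms of $x_k$; you instead Taylor-expand in $y_k=X_k-X(\nu_k)$, find the linear coefficient $-2/X(\nu_k)$, and appeal to the general fact that a local homomorphism of $n$-dimensional regular complete local $\cor$-algebras inducing an isomorphism on cotangent spaces is itself an isomorphism. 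Both routes are valid; the paper's is self-contained and requires no commutative algebra beyond inverting power series, while yours is slightly more conceptual and would generalize to substitutions whose components are not each a function of a single variable.

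The genuinely valuable part of your write-up is the equivariance check, which the paper dismisses as "clear" but in fact hides the one nontrivial ingredient: $s_0$-equivariance, and indeed the very fact that the $\weyl_n$-action on the $X$-side carries $\Oh_\nu$ to $\Oh_{s_0\cdot\nu}$, rests on the symmetry condition \ref{cond: Q0}, $X(\theta(\nu_1))=X(\nu_1)^{-1}$. Your computation showing both sides reduce to $\bigl(X_1X(\nu_1)-(X_1X(\nu_1))^{-1}\bigr)e(s_0\cdot\nu)$ is exactly right, and it is a worthwhile observation that \ref{cond: Q0} is what synchronizes the sign involution $x_1\mapsto -x_1$ with the inversion $X_1\mapsto X_1^{-1}$.
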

	
	\begin{proof}
		Observe that 
		\[  \bl X(\nu_k)X_k^{-1} - X(\nu_k)^{-1}X_k \br  = \mathsf{f}(1-X(\nu_k)^{-1}X_k), \]
		where
		\begin{equation} \label{eq: f series} \mathsf{f}(z) = z + \frac{z}{1-z} = 2z + \sum_{k \geqslant 2} z^k \in\fml{\cor}{z}. \end{equation} 
		Hence \eqref{x-X map} is well-defined. 
		Since the constant coefficient of $\mathsf{f}$ vanishes and the degree one coefficient is invertible, $\mathsf{f}$ has a composition inverse. Therefore, \eqref{x-X map} is an isomorphism. The equivariance is clear. 
	\end{proof} 

We identify $$\skewrr  \isoto \skewr$$ using \eqref{x-X map} and, following \eqref{eq: oklr loc}, we regard the oKLR algebra $\oklr$ as a subalgebra 
	of $\skewr$.

	\subsection{Boundary Schur-Weyl duality functor}\label{ss:boundary-SW}

We will now construct a $(\Uqk,\oklr)$-bimodule and the associated Schur-Weyl functor. 


	For each $\nu \in \comp$, set 
	$$
	V_\nu \coloneqq V(\nu_1)_{z_{\nu,1}} \otimes \cdots
	\otimes V(\nu_n)_{z_{\nu,n}} e(\nu).$$
	It is a right $\mathcal{O}\otimes \UqLg$-module,
	with $X_k$ acting as $z_{\nu, k}$.
	Set 
\begin{alignat}{8} \label{eq:Vhat}
\hV_\nu \coloneqq& \ \Oh_{\nu} \otimes _{\mathcal{O}} V_\nu, \qquad& \tbV[ \beta ] \coloneqq& \ \soplus_{\nu \in \comp} \hV_\nu, \\
\hVK_\nu \coloneqq& \ \widehat{\mathcal{K}}_{ \nu } \otimes _{\mathcal{O}} V_\nu, \qquad& \tbVK[ \beta ] \coloneqq& \ \soplus_{\nu \in \comp} \hVK_\nu. 
\end{alignat}
	
	\begin{theorem} \label{thm: bimodule}
		\hfill
		\begin{enumerate}\itemsep0.25cm
			\item 
			The space $\tbVK[ \beta ]$ has a natural structure of a $(\Uqk,\oklr)$-bimodule
			induced by the action of the trigonometric R- and K-matrices.
			\item
			The subspace $\tbV[{ \beta }] \subset \tbVK[ \beta ]$ is stable under
			the right action of the subalgebra $\oklr$ of $\skewr$.
			In particular, $\tbV[{ \beta }]$ has the structure of
			$(\Uqk,\oklr)$-bimodule.
			\item 	There is a right exact functor 
			\[ 
			\tfun_{ \beta } \colon \Modgr{\oklr} \longrightarrow \Mod{\Uqk},  \quad M \mapsto \tbV[{ \beta }]\otimes_{\oklr}M . 
			\] 
			Summing over all self-dual dimension vectors, we get 
			\begin{equation*} \label{eq: orbKKK}
				\tfun \coloneqq \soplus_{\beta \in \N[J]^\theta} \tfun_\beta : \ \Modgr{{}^\theta \mathcal{R}({\bm\lambda})}
				\longrightarrow \Mod{\Uqk}.
			\end{equation*} 
		\end{enumerate}
	\end{theorem}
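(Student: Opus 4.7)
The plan is to define explicit right $\oklr$-actions on $\tbVK[\beta]$ using the trigonometric R- and K-matrices, modelled on the embedding \eqref{eq: oklr loc}, and then verify the $o$KLR relations case by case. The left $\Uqk$-action is the restriction of the diagonal $\UqLg$-action on each $V_\nu$. For the right action on $\hVK_\nu$: $e(\mu)$ acts as projection; under \eqref{x-X map}, $x_l$ acts by multiplication; for $k\geq 1$, $\tau_k$ acts as the divided difference $(s_k-1)/(x_k-x_{k+1})$ if $\nu_k=\nu_{k+1}$, and as $\widetilde{P}_{\nu_k,\nu_{k+1}}(x_k,x_{k+1})\,\rRMv{V(\nu_k) V(\nu_{k+1})}{X_{k+1}/X_k}$ otherwise; finally, $\tau_0$ acts as $(s_0-1)/x_1$ if $\nu_1\in J^\theta$, and as $\widetilde{P}_{\nu_1}(x_1)\,\rKM{V(\nu_1)}{X_1}$ otherwise, the K-matrix being applied to the first tensor factor. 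The case $\nu_k=\nu_{k+1}$ is well-defined thanks to $\rRMv{V(i),V(i)}{1}=\id$ from \ref{cond: P2}, and the case $\nu_1\in J^\theta$ thanks to $\rKM{V(i)}{1}=\id$ from \ref{cond: Q3}.

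The verification splits cleanly. The relations \eqref{eq: idem def rels 1}--\eqref{eq: mixed def rel 1} not involving $\tau_0$ are precisely those treated by Kang-Kashiwara-Kim \cite{kang-kashiwara-kim-18}; their argument transfers verbatim, the essential ingredients being the spectral Yang-Baxter equation \eqref{eq:z-YBE} and the matching of $Q_{ij}$ with the symmetric form $P_{ij}(u,v)P_{ji}(v,u)$ of the R-matrix denominator. The commutations \eqref{eq: def braid rel Aaa} and \eqref{eq: mixed def rel 3} between $\tau_0$ and generators at positions $k,l\geq 2$ are immediate, since $\tau_0$ and the K-matrix act only on the first tensor factor. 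The remaining mixed relation \eqref{eq: mixed def rel 2} is a direct consequence of the intertwining property \eqref{eq:spectral-k-intertwiner}, which translates $X_1\mapsto X_1^{-1}$, hence $x_1\mapsto-x_1$ under \eqref{x-X map}. The left $\Uqk$-linearity of the right action is automatic: for $\tau_k$ ($k\geq 1$) it follows from $\rRMv{}{}$ being a $\UqLg$-intertwiner (hence a $\Uqk$-intertwiner), and for $\tau_0$ from Theorem~\ref{thm:unitary-k} and the very definition of the trigonometric K-matrix as a QSP intertwiner.

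The main computational obstacle is the $\sfB$-type braid relation \eqref{eq: B braid rel klr} between $\tau_0$ and $\tau_1$, with its four cases depending on whether $\nu_1,\nu_2$ are $\theta$-fixed. The plan is to expand $(\tau_1\tau_0)^2-(\tau_0\tau_1)^2$ on $\hVK_\nu$ using the definitions above. After normalization by the $\widetilde{P}$-factors, both sides of Cherednik's reflection equation \eqref{eq:rational-tw-RE} appear, and their difference reproduces the appropriate right-hand side of \eqref{eq: B braid rel klr}. In the ``both fixed'' and ``neither fixed'' subcases the relation is clean; in the two ``mixed'' subcases the divided-difference operator produces a correction that, combined with the normalization conditions \eqref{eq: c-ij}--\eqref{eq: c-i}, yields the required expressions involving $(Q'_{\nu_2}(x_2)-Q'_{\nu_1}(x_1))/(x_1+x_2)$ and its analogues. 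The quadratic relation \eqref{eq: quad def rels 2} is obtained similarly from the unitarity \eqref{eq:unitarity-k}.

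Finally, part (2) reduces to checking that the normalized R- and K-matrix actions preserve the polynomial locus. The denominator of $\rRMv{V(\nu_k) V(\nu_{k+1})}{z}$ at $z=X(\nu_{k+1})/X(\nu_k)$ has order exactly $\dR{\nu_k\nu_{k+1}}$ by \ref{cond: P3}, which is cancelled by the factor $\widetilde{P}_{\nu_k,\nu_{k+1}}$ since the $J$-quiver has $\dR{ij}$ arrows from $i$ to $j$; similarly the K-matrix pole of order $\dK{\nu_1}$ at $X_1=X(\nu_1)$ is cancelled by $\widetilde{P}_{\nu_1}$, using the framing $\bm\lambda(i)=\dK{\theta(i)}$. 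Hence the action of $\oklr$ on $\tbVK[\beta]$ restricts to $\tbV[\beta]$. Part (3) is then formal: the tensor product functor $\tfun_\beta=\tbV[\beta]\otimes_{\oklr}(-)$ is right exact, and summing over $\beta\in\N[J]^\theta$ produces $\tfun$.
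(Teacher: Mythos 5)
Your overall plan would work, but it misses the strategic simplification that makes the paper's proof of part (1) short. The paper does not verify the $\oklr$ relations directly: instead it first equips $\tbVK[\beta]$ with a right module structure over the whole skew group ring $\skewr$, by assigning $e(\nu)s_k\mapsto\Rm^\nu_k$ (normalized R-matrix exchanging factors $k,k+1$) and $e(\nu)s_0\mapsto\Km^\nu$ (K-matrix on the first factor). The only thing that needs checking there is that these operators satisfy the defining relations of $\weyl_n$ together with the expected commutations with the $X_l$; all of this falls out directly from the Yang--Baxter equation \eqref{eq:z-YBE}, unitarity of R and K, and Cherednik's reflection equation \eqref{eq:rational-tw-RE}. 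Once that is done, the $\oklr$-module structure is automatic by restricting along the embedding $\oklr\hookrightarrow\skewr$ of Corollary~\ref{cor: skew ring emb}, and the left $\Uqk$-action commutes because each $\Rm^\nu_k$, $\Km^\nu$ is a $\Uqk$-intertwiner. Your route — defining the $\tau_k$ via the embedded formulae and then re-verifying each $\oklr$ relation — amounts to re-proving Corollary~\ref{cor: skew ring emb} (that the embedding is a well-defined algebra homomorphism) interleaved with the R/K identities. This is not incorrect, but it duplicates an established structural fact, and the four-case analysis of the type-$\sfB$ braid relation \eqref{eq: B braid rel klr} that you flag as the ``main computational obstacle'' is exactly the computation that working through $\skewr$ lets you avoid; your sketch of it (``a correction that ... yields the required expressions'') is genuinely delicate and is not actually carried out, so there is a non-trivial gap at that step as written.

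For parts (2) and (3), your arguments match the paper. In part (2) you correctly identify the pole-cancellation mechanism: the zero of order $\dR{\nu_k\nu_{k+1}}$ (resp.\ $\dK{\nu_1}$) in the $\widetilde{P}$-factor cancels the pole of the same order in the normalized R- (resp.\ K-)matrix at the relevant specialization; the paper makes this precise via the explicit factorizations $x_k-x_{k+1}=\bigl(X_k/X(\nu_{k+1})+X(\nu_k)/X_{k+1}\bigr)\bigl(X_{k+1}/X_k-X(\nu_{k+1})/X(\nu_k)\bigr)$ and $x_1=-(X(\nu_1)X_1)^{-1}(X_1-X(\nu_1))(X_1+X(\nu_1))$, together with the reality/QSP-irreducibility conditions \ref{cond: P2} and \ref{cond: Q3} in the equal-index cases. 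Part (3) is formal, as you say.
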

	
	\begin{proof}
		(1) We first endow $\tbVK[ \beta ]$ with the structure of a right $\skewr$-module. 
		For each $\nu \in \comp$ and $k=1,\cdots, n-1$, let 
		$\Rm^{\nu}_{k} \colon \hVK_\nu \rightarrow \ 
		\hVK_{s_k(\nu)}$ be the intertwiner defined as follows. 
		We first consider the $\bsF$--linear map $V_\nu \rightarrow \ 
		V_{s_k(\nu)}$ given by the composition of the operator
				\begin{align*}
			v_1 \otimes \cdots \otimes v_n
			\mapsto& \ v_1 \otimes \cdots \otimes
			\rRMv{V(\nu_k),V(\nu_{k+1})}{z_{\nu, k+1}/z_{\nu,k}}
			\cdot(v_k \otimes v_{k+1}) \otimes \cdots \otimes v_n
		\end{align*} 
		with the map identifying $z_{\nu, k}$ (resp. $z_{\nu, k+1}$) 
		with $z_{s_k(\nu), k+1}$ (resp. $z_{s_k(\nu), k}$) and $z_{\nu, \ell}$ with 
		$z_{s_k(\nu), \ell}$ for $\ell\neq k,k+1$. We then
		extend it by $\mathcal{O}$--linearity.
		\Omit{
		 given by
		\begin{align*}
			v_1 \otimes \cdots \otimes v_n
			\mapsto& \ v_1 \otimes \cdots \otimes
			\rRMv{V(\nu_k),V(\nu_{k+1})}{X_{k+1}/X_k}
			 \cdot(v_k \otimes v_{k+1}) \otimes \cdots \otimes v_n
		\end{align*} 
		for $v_k \in V(\nu_k)_{z_{\nu,k}}$. Here we identify $z_{\nu, k}, z_{\nu, k+1}$ in the domain with $z_{s_k(\nu), k+1}, z_{s_k(\nu), k}$, respectively, in the image.  
		}
		Similarly, let $\Km^{\nu} \colon \hVK_\nu \rightarrow \
		\hVK_{s_0(\nu)}$ be the QSP intertwiner defined as follows. 
		We first consider the $\bsF$--linear map $V_\nu \rightarrow \ 
		V_{s_0(\nu)}$ given by the composition of the operator
		\begin{align*}
	v_1 \otimes \cdots \otimes v_n
	\mapsto& \ \Km_{V(\nu_1)}(z_{1})\cdot(v_1) \otimes v_2 \otimes \cdots \otimes v_n. 
		\end{align*} 
		with the map identifying $z_{\nu, 1}$ with $z_{s_0(\nu), 1}^{-1}$
		and $z_{\nu, \ell}$ with 
		$z_{s_0(\nu), \ell}$ for $\ell\neq 1$. We then
		extend it by $\mathcal{O}$--linearity.
		\Omit{
		given by
		\begin{align*}
			v_1 \otimes \cdots \otimes v_n
			\mapsto& \ \Km_{V(\nu_1)}(X_{1})\cdot(v_1) \otimes v_2 \otimes \cdots \otimes v_n. 
		\end{align*} 
Again, we identify $z_{\nu, 1}$ in the domain with $z_{s_0(\nu), 1}^{-1}$ in the image. }
		From the Yang-Baxter equation \eqref{eq:z-YBE} and the unitarity condition \eqref{eq:unitarity-R}, it follows that 
		\begin{align*}
			\Rm^{\nu}_{k} \circ X_l =& \ X_{s_k(l)} \circ \Rm^{\nu}_{k}, \\
			\Rm^{s_k(\nu)}_{k} \circ \Rm^{\nu}_{k} =& \ \id_{V_{\nu}},   \\ 
			\Rm^{s_{k+1}s_k(\nu)}_{k} \circ \Rm^{s_k (\nu)}_{k+1} \circ \Rm^{\nu}_{k} =& \ 
			\Rm^{s_k s_{k+1}(\nu)}_{k+1} \circ \Rm^{s_{k+1}(\nu)}_{k} \circ \Rm^{\nu}_{k+1}. 
		\end{align*} 
		Instead, from Cherednik's generalized reflection equation \eqref{eq:rational-tw-RE} and the unitarity condition \eqref{eq:unitarity-k}, it follows that 
		\begin{align*} 
			\Km^{\nu} \circ X_k =& \ X_{s_0(k)} \circ \Km^{\nu},  \\ 
			\Km^{s_0(\nu)}\circ\Km^\nu =& \ \id_{V_\nu}, \\
			\Km^{s_1s_0s_1(\nu)} \circ \Rm_{1}^{s_0s_1(\nu)} \circ \Km^{s_1(\nu)} \circ \Rm_{1}^{\nu} =& \ \Rm_{1}^{s_0s_1s_0(\nu)} \circ \Km^{s_1s_0(\nu)} \circ \Rm_{1}^{s_0(\nu)} \circ \Km^{\nu}.    
		\end{align*} 
		It follows that letting $\Kcompl$ act by multiplication and assigning
		\begin{equation} \label{eq: oklr action on bimodule}
			e(\nu)s_0 \mapsto \Km^\nu, \quad e(\nu)s_k \mapsto \Rm_k^\nu
		\end{equation} 
		makes 
		$\tbVK[ \beta ]$ into a $\bl \Uqk, \skewr \br$-bimodule.\\
		
		(2)
		It is enough to show that $\tbV[{ \beta }]$ is stable under the action of the generators $\tau_k \in \oklr$. 
		First assume that $1 \leqslant k \leqslant n-1$. The proof is similar to that of \cite[Thm.~ 3.1.3]{kang-kashiwara-kim-18}. There are two cases to be considered. First suppose that $\nu_k \neq \nu_{k+1}$. 
		Then 
		\[ 
		e(\nu)\widetilde{P}_{\nu_{k+1},\nu_{k}}(x_k,x_{k+1})  = e(\nu)d_{\nu_{k},\nu_{k+1}}(X_{k+1}/X_{k}) \frac{ \bl x_k - x_{k+1} \br^{d_{\nu_{k},\nu_{k+1}}}c_{\nu_{k},\nu_{k+1}}(x_k,x_{k+1})}{d_{\nu_{k},\nu_{k+1}}(X_{k+1}/X_{k})}. 
		\] 
		Let $f$ denote the fraction on the RHS. 
		One easily calculates that 
		\[
		x_k - x_{k+1} = \bl X_k/ X(\nu_{k+1}) + X(\nu_k)/X_{k+1} \br \bl X_{k+1}/X_k - X(\nu_{k+1})/X(\nu_k)\br, 
		\]
		where the first factor on the RHS is an invertible element of $\Oh_\nu$. 
		Since $d_{\nu_{k},\nu_{k+1}}$ is the multiplicity of the polynomial $d_{\nu_{k},\nu_{k+1}}(X_{k+1}/X_{k})$ at $X_{k+1}/X_k = X(\nu_{k+1})/X(\nu_k)$, it follows that $f \in \Oh_\nu$. Hence 
		\[ 
		\hV_\nu \tau_k \subset \hV_\nu \widetilde{P}_{\nu_{k+1},\nu_{k}}(x_k,x_{k+1})s_k \subset  \hV_\nu d_{\nu_{k},\nu_{k+1}}(X_{k+1}/X_{k})s_k, 
		\]
		which, by definition, is contained in $\hV_{s_k\cdot\nu}$. 
		
		Secondly, suppose that $\nu_k = \nu_{k+1}$. By \ref{cond: P2}, we have 
		$$\rRMv{V(\nu_k),V(\nu_{k})}{X_{k+1}/X_k}|_{X_{k+1}/X_k=1} =\id.$$
		Hence $\rRMv{V(\nu_k),V(\nu_{k})}{X_{k+1}/X_k} - \id$  
		has a zero at $X_{k+1}/X_k = X(\nu_{k+1})/X(\nu_k) = 1$ and so 
		\begin{align*}
			\hV_\nu \tau_k =& \  \hV_\nu(x_k-x_{k+1})^{-1}(s_k-1)\\
 \subset& \ \hV_\nu( X_{k+1}/X_k - 1 )^{-1} (\rRMv{V(\nu_k),V(\nu_{k})}{X_{k+1}/X_k}
			- 1) \subset \hV_\nu. 
		\end{align*}
		
		Next, let $k = 0$. Again, there are two cases to be considered. First suppose that $\theta(\nu_1) \neq \nu_1$. Then 
		\[
		e(\nu)\widetilde{P}_{\theta(\nu_1)}(x_1) = e(\nu) d_{\nu_1}(X_1)\frac{x_1^{d_{\nu_1}}c_{\nu_1}(x_1)}{d_{\nu_1}(X_1)}.  
		\] 
		Let $f$ denote the fraction on the RHS. 
		Note that 
		\[
		x_1 = -(X(\nu_1)X_1)^{-1}(X_1 - X(\nu_1))(X_1 + X(\nu_1)).  
		\]
		Since $d_{\nu_1}$ is the multiplicity of the polynomial $d_{\nu_1}(X_{1})$ at $X_1 = X(\nu_{1})$, it follows that $f \in \Oh_\nu$. Hence 
		\[ 
		\hV_\nu \tau_0 \subset \hV_\nu \widetilde{P}_{\theta(\nu_1)}(x_1)s_0 \subset \hV_\nu d_{\nu_1}(X_1)s_0, 
		\]
		which, by definition, is contained in $\hV_{s_k\cdot\nu}$. 
		
		Secondly, suppose that $\nu_1 = \theta(\nu_1)$. Then \ref{cond: Q0} implies that $X(\nu_1) = \pm 1$. By \ref{cond: Q3}, we have $\Km_{V(\nu_1)}(X_1)|_{X_1=\pm 1} = \id$. 
		Hence $\Km_{V(\nu_1)}(X_1) - \id$ has a zero at $X_{1} = X(\nu_1) = \pm 1$ and so
		\begin{align*}
			\hV_\nu \tau_0 =  \hV_\nu x_1^{-1}(s_0-1) \subset  \hV_\nu ( X_1 \mp 1)^{-1} (\Km_{V(\nu_1)}(X_1) - 1) \subset \hV_\nu. 
		\end{align*}
		Part (3) follows immediately from the bimodule structure.
	\end{proof}
	

	\subsection{Compatibility with the Kang--Kashiwara--Kim functor}\label{ss:KKK-compatibility}
	
	We now consider the relationship between $\tfun$ and the Kang--Kashiwara--Kim functor from \cite{kang-kashiwara-kim-18}. 
	Take $\alpha \in \N[J]$ with $\ttt \alpha = \beta$. Set  
	\[
	\widehat{\mathcal{O}}_\alpha \seteq \soplus_{\nu \in J^\alpha} \Oh_{ \nu }e(\nu), \quad \widehat{\mathcal{K}}_\alpha \seteq \soplus_{\nu \in J^\alpha} \widehat{\mathcal{K}}_{ \nu }e(\nu), \quad \bV[ \alpha ] := \soplus_{\nu \in J^\alpha} \hV_\nu, \quad \bVK[ \alpha ] := \soplus_{\nu \in J^\alpha} \hVK_\nu
	\] 
	Note that there is a $\sym_n$-equivariant algebra isomorphism
	\begin{equation} \label{x-X map2}
		\bPh_{\alpha} \ \isoto \ \widehat{\mathcal{O}}_\alpha, \qquad e(\nu) \mapsto e(\nu), \quad x_ke(\nu) \mapsto \left(\frac{X_k}{X(\nu_k)} - 1 \right) e(\nu). 
	\end{equation} 
	We identify 
\[ \widehat{\mathbb{K}}_\alpha \rtimes \cor[\sym_n]  \isoto \widehat{\mathcal{K}}_\alpha \rtimes \cor[\sym_n]
\]
 using \eqref{x-X map2}, and consider $\klr$ as a subalgebra of $\widehat{\mathcal{K}}_\alpha \rtimes \cor[\sym_n]$ via \eqref{eq: klr loc}. 
	The right  action of $\skewr$ on $\tbVK[ \beta ]$ restricts to a $\widehat{\mathcal{K}}_\alpha \rtimes \cor[\sym_n]$-action on $\bVK[ \alpha ]$. This action commutes with the left action of $\UqLg$ and yields the following result due to Kang, Kashiwara, and Kim \cite[Thm.~ 3.1.3 and 3.2.1]{kang-kashiwara-kim-18}.
	
	\begin{theorem}[\cite{kang-kashiwara-kim-18}]
		\hfill
		\begin{enumerate}\itemsep0.25cm
			\item
			The subspace $\bV[{ \alpha }] \subset \bVK[ \alpha ]$ is stable under
			the right action of the subalgebra $\klr$ of $\widehat{\mathcal{K}}_\alpha \rtimes \cor[\sym_n]$. In particular, $\bV[{ \alpha }]$ has the structure of
			$(\UqLg,\klr)$-bimodule and yields a functor \[
			\fun_\alpha \colon \Modgr{\mathcal{R}} \longrightarrow \Mod{\UqLg}, \quad M \mapsto \bV[{ \alpha }]\otimes_{\klr}M . 
			\] 
			Summing over all the dimension vectors, one gets 
			\begin{equation} \label{eq: KKK functor}
				\fun := \soplus_{\alpha \in \N[J]} \fun_\alpha : \ \Modgr{\mathcal{R}}
				\longrightarrow \Mod{\UqLg}.
			\end{equation} 
			\item The functor \eqref{eq: KKK functor} is a tensor functor, which preserves finite-dimensional modules:
			\[
			\fun \colon \Modfdgr{\mathcal{R}} \longrightarrow \Modfd{\UqLg}. 
			\]
		\end{enumerate}
	\end{theorem}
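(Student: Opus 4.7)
Part (1) can be obtained as a direct specialization of Theorem~\ref{thm: bimodule}. Indeed, the construction of the $(\UqLg, \skewr)$-bimodule $\tbVK[\beta]$ in the proof of Theorem~\ref{thm: bimodule}(1) uses two types of intertwiners: the R-matrix intertwiners $\Rm^\nu_k$ (governed by the Yang--Baxter equation and unitarity) and the K-matrix intertwiners $\Km^\nu$ (governed by the reflection equation). Restricting attention to a dimension vector $\alpha\in\N[J]$ (not necessarily self-dual) and discarding $\tau_0$, only the R-matrix intertwiners are needed, and they endow $\bVK[\alpha]$ with the structure of a $(\UqLg, \widehat{\mathcal{K}}_\alpha \rtimes \cor[\sym_n])$-bimodule. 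The stability of $\bV[\alpha]$ under the right action of $\klr$ is then precisely the case $1 \leqslant k \leqslant n-1$ in the proof of Theorem~\ref{thm: bimodule}(2): when $\nu_k \neq \nu_{k+1}$ the factor $\widetilde{P}_{\nu_{k+1},\nu_k}(x_k,x_{k+1})$ divides out the simple pole of the denominator, while when $\nu_k = \nu_{k+1}$ the identity $\rRMv{V(\nu_k)V(\nu_k)}{1} = \id$ from axiom~\ref{cond: P2} ensures that $(X_{k+1}/X_k - 1)^{-1}(\rRMv{}{X_{k+1}/X_k} - \id)$ is regular on $\hV_\nu$. The functor $\fun_\alpha$ is then defined by the usual tensor product construction, and summing over $\alpha$ yields $\fun$.

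For part (2), the main new input is the tensor structure. Given $\alpha = \alpha' + \alpha''$, the definition of $V_\nu$ as an iterated tensor product yields a natural identification $V_{\nu\mu} \simeq V_\nu \otimes V_\mu$ that, after completion, is compatible with the inclusion \eqref{eq: induction inc klr} of $\klrvv{\alpha'}{\alpha''}$ into $\klrv{\alpha}$, since the latter inclusion is modeled precisely on the concatenation of sequences and shifting of spectral parameters. Consequently, for $M \in \Modgr{\klrv{\alpha'}}$ and $N \in \Modgr{\klrv{\alpha''}}$, standard adjunction combined with the bimodule isomorphism $\bV[\alpha'+\alpha''] e_{\alpha',\alpha''} \simeq \bV[\alpha'] \otimes \bV[\alpha'']$ produces a natural $\UqLg$-module isomorphism
\[
\fun(M \circ N) \;\simeq\; \fun(M) \otimes \fun(N).
\]
The coherence of these isomorphisms (pentagon axiom) is then inherited from the strict associativity of both concatenation in $\wor$ and of the tensor product of $\UqLg$-modules.

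Finally, to see that $\fun$ preserves finite-dimensionality, let $M \in \Modfdgr{\klr}$. Since $M$ is a finite-dimensional graded $\klr$-module and $\deg(x_k) = 2 > 0$, each $x_k$ acts nilpotently on $M$. Under the identification \eqref{x-X map2}, this means that the formal power series completion $\bPh_\alpha$ acts on $M$ through a finite-dimensional quotient. Tensoring over $\klr$ with $\bV[\alpha]$ therefore produces a space whose $e(\nu)$-component is a finite-dimensional quotient of $V_\nu$, and the result is finite-dimensional since $J^\alpha$ is finite and each $V(\nu_k)$ is.

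The main technical obstacle in this plan is the verification of part (2): one must carefully track the spectral-parameter identifications along the concatenation map $\iota_{\alpha,\alpha'}$ to ensure that the proposed tensor isomorphism is indeed a morphism of $(\UqLg, \klrvv{\alpha}{\alpha'})$-bimodules, and then check pentagon coherence. This verification is carried out in detail in~\cite{kang-kashiwara-kim-18}.
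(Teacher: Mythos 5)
The paper does not supply a proof of this theorem: it is cited directly from \cite[Thms.~3.1.3 and 3.2.1]{kang-kashiwara-kim-18}, with only the remark preceding the statement that the right action of $\skewrr$ on $\tbVK[\beta]$ restricts to a $\widehat{\mathcal{K}}_\alpha \rtimes \cor[\sym_n]$-action on $\bVK[\alpha]$ and that this restricted action commutes with the left $\UqLg$-action. Your Part (1) makes that observation explicit and correctly notes the key point it relies on: the $\Rm^\nu_k$ are intertwiners for all of $\UqLg$, not merely $\Uqk$, so dropping $\tau_0$ gives a genuine $(\UqLg,\klr)$-bimodule and the stability of $\bV[\alpha]$ is exactly the $1\leqslant k\leqslant n-1$ case of the proof of Theorem~\ref{thm: bimodule}(2). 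Logically this is sound as a derivation from Theorem~\ref{thm: bimodule}, but bear in mind that the paper's proof of Theorem~\ref{thm: bimodule} is itself modeled on (and says it is ``similar to'') KKK's argument, so the conceptual dependence runs the other way; you are not giving an independent proof so much as unwinding the paper's generalization back to its source.

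For Part (2) you sketch the right mechanism (compatibility of concatenation and the iterated-tensor definition of $V_\nu$, cf.\ the argument the paper does carry out in Theorem~\ref{thm: KKK compatibility} for the module-category version), but you defer the verification to \cite{kang-kashiwara-kim-18}, which matches the paper's own treatment. One imprecision to flag in the finite-dimensionality argument: $V_\nu$ is infinite-dimensional over $\bsF$ because of the $\bsF[z,z^{-1}]$ factors carrying the spectral parameters, so the $e(\nu)$-component of $\bV[\alpha]\otimes_{\klr}M$ is a quotient of the completion $\hV_\nu=\Oh_\nu\otimes_{\mathcal{O}}V_\nu$ rather than ``a finite-dimensional quotient of $V_\nu$.'' The correct statement is that nilpotence of the $x_k$ on $M$ forces a power of the maximal ideal of $\Oh_\nu$ to act as zero on the tensor product, and it is this, together with finiteness of $J^\alpha$ and of each $V(\nu_k)$, that gives finite-dimensionality.
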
 
	
	
	We now prove that the functor $\tfun$ is \emph{compatible} with $\fun$, \ie it is a functor
	of module categories over $ \Modfdgr{\mathcal{R}}$ and $\Modfd{\UqLg}$, respectively.
	
	\begin{theorem} \label{thm: KKK compatibility} 
		The functors $(\tfun,\fun)$ intertwine the two monoidal actions:    
\[ \Modgr{{}^\theta \mathcal{R}({\bm\lambda})} \curvearrowleft \Modgr{\mathcal{R}}, \qquad \modv{\Uqk} \curvearrowleft \modv{\UqLg}. \] 
	\end{theorem}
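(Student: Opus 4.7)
The strategy is to construct, for each pair $\beta \in \N[J]^\theta$ and $\alpha'\in \N[J]$, a natural isomorphism of $(\Uqk, \oklrvv{\beta}{\alpha'})$-bimodules
\begin{equation*}
\omega_{\beta,\alpha'} \colon\ \tbV[\beta]\otimes \bV[\alpha'] \ \stackrel{\sim}{\longrightarrow}\ \tbV[\beta+{}^\theta\alpha']\cdot {}^\theta e_{\beta,\alpha'},
\end{equation*}
where the right action on the target is via the embedding $\ttt\iota_{\beta,\alpha'}$ from \eqref{eq: induction inc}. Tensoring on the right with $M\otimes N$ for $M\in \Modgr{\oklr}$ and $N\in \Modgr{\klrv{\alpha'}}$ will yield the required natural isomorphism
\begin{equation*}
\tfun(M)\otimes \fun(N) \ \stackrel{\sim}{\longrightarrow}\ \tfun(M\acts N),
\end{equation*}
which encodes the module functor structure on the pair $(\tfun,\fun)$.

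To define $\omega_{\beta,\alpha'}$, I would decompose both sides by the pairs $(\nu^{(1)},\nu^{(2)}) \in \comp \times J^{\alpha'}$ of compositions indexing the weight idempotents. The tautological identification $V_{\nu^{(1)}\nu^{(2)}} = V_{\nu^{(1)}}\otimes V_{\nu^{(2)}}$ of affinized $\UqLg$-modules, together with the compatibility of $\widehat{\mathcal{O}}_{\nu^{(1)}\nu^{(2)}}$ with the completed tensor product $\widehat{\mathcal{O}}_{\nu^{(1)}}\otimes \widehat{\mathcal{O}}_{\nu^{(2)}}$, yields a vector-space isomorphism on each summand, which assembles into $\omega_{\beta,\alpha'}$.

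For the left $\Uqk$-equivariance, the coideal property $\Delta(\Uqk)\subset \Uqk\otimes \UqLg$ ensures that the restriction along $\Delta$ of the $\UqLg$-action on $V_{\nu^{(1)}}\otimes V_{\nu^{(2)}}$ is precisely the $\Uqk$-action on $V_{\nu^{(1)}}$ tensored with the restricted action on $V_{\nu^{(2)}}$, which is by definition the monoidal action of $\Modfd{\UqLg}$ on $\Modfd{\Uqk}$ recalled in \S\ref{ss:coideal-module-cat}. For the right equivariance, I verify each family of generators of $\oklrvv{\beta}{\alpha'}$ against \eqref{eq: induc1}--\eqref{eq: induc2}: idempotents and polynomial generators transport tautologically; the inner braid generators $\tau_k\otimes 1$ and $1\otimes \tau_{k'}$ act as normalized R-matrices localized inside $V_{\nu^{(1)}}$ or inside $V_{\nu^{(2)}}$ respectively, matching \eqref{eq: oklr action on bimodule}; and the boundary generator $\tau_0\otimes 1$, whose image $\tau_0\cdot {}^\theta e_{\beta,\alpha'}$ acts on the target via the K-matrix on position $1$ (which lies inside the $\nu^{(1)}$-block), recovers exactly $\Km^{\nu^{(1)}}\otimes \id_{V_{\nu^{(2)}}}$.

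Given the family $\{\omega_{\beta,\alpha'}\}$, the consistency conditions of \S\ref{ss:coideal-module-cat} reduce to formal verifications. The unit axiom follows from the observation that ${}^\theta\iota_{\beta,0}$ and ${}^\theta e_{\beta,0}$ are trivial and that $\triv$ acts as the monoidal unit of $\Modgr{\mathcal{R}}$. For the associativity axiom, given $M\in \Modgr{\oklr}$, $N\in \Modgr{\klrv{\alpha'}}$, $P\in \Modgr{\klrv{\alpha''}}$, both composites
\begin{equation*}
\tfun(M)\otimes \fun(N)\otimes \fun(P) \ \rightrightarrows\ \tfun(M\acts(N\circ P))\cong \tfun((M\acts N)\acts P)
\end{equation*}
reduce to the common bimodule identification
$\tbV[\beta+{}^\theta\alpha'+{}^\theta\alpha'']\cdot e \cong \tbV[\beta]\otimes \bV[\alpha']\otimes \bV[\alpha'']$
arising from the iterated coproduct on $\UqLg$ and the threefold extension of $\ttt\iota$, whence coherence.

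The main obstacle, although not a deep one, is the bookkeeping for $\tau_0$: one must confirm that $\ttt\iota_{\beta,\alpha'}(\tau_0\otimes 1)$ acts trivially on $V_{\nu^{(2)}}$, i.e.\ that the K-matrix contribution is genuinely confined to the first tensor factor. This follows from the locality of the formula \eqref{eq: oklr action on bimodule} together with the fact that $\ttt\iota_{\beta,\alpha'}$ sends $\tau_0\otimes 1$ to $\tau_0\cdot {}^\theta e_{\beta,\alpha'}$ without shift. All remaining verifications mirror the monoidal structure of $\fun$ already established in \cite{kang-kashiwara-kim-18}.
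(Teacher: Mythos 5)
Your overall strategy shares the right skeleton with the paper's — reduce to a comparison of $\tbV[\beta]\otimes \bV[\alpha']$ with $\tbV[\beta+{}^\theta\alpha']$, check equivariance generator by generator, and verify the coherence axioms — but it contains a genuine gap at the very first step that the paper is careful to avoid.

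You claim a bimodule isomorphism
\[
\omega_{\beta,\alpha'} \colon\ \tbV[\beta]\otimes \bV[\alpha'] \ \stackrel{\sim}{\longrightarrow}\ \tbV[\beta+{}^\theta\alpha']\cdot {}^\theta e_{\beta,\alpha'}\,,
\]
justified by a ``tautological identification'' together with ``compatibility of $\widehat{\mathcal{O}}_{\nu^{(1)}\nu^{(2)}}$ with the completed tensor product $\widehat{\mathcal{O}}_{\nu^{(1)}}\otimes \widehat{\mathcal{O}}_{\nu^{(2)}}$.'' This is false as stated. The ring $\widehat{\mathcal{O}}_{\nu^{(1)}\nu^{(2)}}$ is the formal power series ring in \emph{all} of the shifted variables $X_k - X(\nu_k)$, $1\leqslant k\leqslant m+n'$, which strictly contains the \emph{ordinary} tensor product $\widehat{\mathcal{O}}_{\nu^{(1)}}\otimes_{\cor} \widehat{\mathcal{O}}_{\nu^{(2)}}$ (the latter is not complete, e.g.\ $\sum_k (X_1-X(\nu_1))^k(X_{m+1}-X(\nu_{m+1}))^k$ lives in the former but not the latter). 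Consequently $\tbV[\beta]\otimes \bV[\alpha']$ is a proper sub-bimodule of $\tbV[\beta+{}^\theta\alpha']\cdot {}^\theta e_{\beta,\alpha'}$, and no such isomorphism exists at the level of bimodules. If one tried to repair this by taking a completed tensor product $\widehat{\otimes}$, the source would no longer compute $\tfun(M)\otimes\fun(N)$ when passed through $-\otimes_{\oklrvv{\beta}{\alpha'}} (M\otimes N)$, since the functors $\tfun$ and $\fun$ are defined via ordinary tensor products.

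The paper's proof works around exactly this: rather than an isomorphism of bimodules, it exhibits only an algebra homomorphism $\Oh_{\nu'}\otimes\Oh_{\nu''}\to\Oh_{\nu}$ (sending $1-X_{m+k}X(\nu_{m+k})^{-1}$ to $\mathsf{f}(1-X_{m+k}X(\nu_{m+k})^{-1})$), and then proves the key lemma that for \emph{finite-dimensional} $\Oh_{\nu'}$- and $\Oh_{\nu''}$-modules $L_1$, $L_2$ the comparison map $L_1\otimes L_2 \to \Oh_\nu\otimes_{\Oh_{\nu'}\otimes\Oh_{\nu''}}(L_1\otimes L_2)$ is an isomorphism. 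Only after this base change does the discrepancy between the two completions vanish, and the required natural isomorphism $\tfun(M\acts N)\cong\tfun(M)\otimes\fun(N)$ appears as a quotient of an isomorphism at the $\Oh$-module level. Your generator-by-generator checks of equivariance, the treatment of $\tau_0$, the coideal argument, and the coherence discussion are all essentially correct and match the paper's (mostly implicit) verifications, but the argument cannot get off the ground without the completion lemma.
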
 
	
	\begin{proof}
We need to show that there are natural isomorphisms
		\begin{equation} \label{eq: functor preserves action} 
		\tfun(M \acts N) \cong \tfun(M) \otimes \fun(N), 
		\end{equation} 
		for all $M \in \Modgr{{}^\theta \mathcal{R}({\bm\lambda})}$ and $N \in \Modgr{\mathcal{R}}$. 

		Let $\beta = \beta_1 + {}^\theta\beta_2$ for some $\beta_1 \in \N[J]$ and $\beta_2 \in \N[J]^\theta$ with $\Norm{\beta_1}_\theta=m$ and $\Norm{\beta_2}=n$. 
		For each $\nu \in {}^\theta J^{\beta}$ with $\nu'= \nu_1,\cdots,\nu_m \in {}^\theta J^{\beta_1}$ and $\nu''= \nu_{m+1},\cdots,\nu_{n+m} \in J^{\beta_2}$, there is an algebra homomorphism 
		$\Oh_{\nu'} \otimes \Oh_{\nu''} \to \Oh_{\nu}$, restricting to the identity map on $\Oh_{\nu'}$ and sending $1-X_{m+k}X(\nu_{m+k})^{-1}$ to $\mathsf{f}(1-X_{m+k}X(\nu_{m+k})^{-1})$ for $1 \leqslant k \leqslant n$. 
		For any finite-dimensional $\Oh_{\nu'}$-module $L_1$ and any finite-dimensional $\Oh_{\nu''}$-module $L_2$, the induced morphism
		\[
		L_1 \otimes L_2 \to \Oh_{\nu} \otimes_{\Oh_{\nu'} \otimes \Oh_{\nu''}}(L_1 \otimes L_2) 
		\]
		is an isomorphism. 
		It follows that for any finite-dimensional $\tOh_{\beta_1}$-module $L_1$ and any finite-dimensional $\Oh_{\beta_2}$-module $L_2$, the induced morphism 
		\begin{equation} \label{eq: orb and KKK}
			(\tbV[\beta_1]\otimes \bV[\beta_2])\otimes_{\tOh_{\beta_1} \otimes\Oh_{\beta_2} } (L_1 \otimes L_2) \to \tbV[\beta] \otimes_{\tOh_{\beta_1} \otimes \Oh_{\beta_2}} (L_1 \otimes L_2)
		\end{equation} 
		is also an isomorphism. 
		
		The module $\tbV[\beta] \otimes_{\oklr}(M \acts N) \cong \tbV[\beta] \otimes_{\oklrvv{\beta_1}{\beta_2}}(M \otimes N)$ is the quotient of $\tbV[\beta] \otimes_{\tOh_{\beta_1} \otimes\Oh_{\beta_2}}(M \otimes N)$ by the submodule generated by $ur\otimes v - u \otimes rv$, where $r \in \oklrvv{\beta_1}{\beta_2}$, $u \in M \otimes N$ and $v \in \tbV[\beta]$. An analogous statement holds for $(\tbV[\beta_1]\otimes \bV[\beta_2])\otimes_{\oklrvv{\beta_1}{\beta_2}}(M \otimes N)$. This, together with \eqref{eq: orb and KKK}, implies the existence of an isomorphism \eqref{eq: functor preserves action}. It is routine to check that the conditions from \cite[(8)--(9)]{haring-old} are also satisfied. 
	\end{proof}
	
	\subsection{Basic properties of the Schur--Weyl functor}\label{ss:SW-properties}
	We now prove several basic properties of $\tfun$. 

	\begin{prop} \label{pro: basic props of SW}
		The functor $\tfun$ has the following properties.
		\begin{enumerate}[itemsep = 2pt]
			\item $\tfun$ preserves finite-dimensional modules. 
			\item For any $\klr$-module $M$, we have 
			\[
			{}^\theta\mathsf{F}(\mathds{1} \acts M) = \mathsf{F}(M)|_{\Uqk}. 
			\] 
			\item 
			For any $i \in J$ such that $i \notin J^\theta$ and ${}^\theta{\bm\lambda}(i) \geqslant 1$, we have 
			\[ {}^\theta\mathsf{F}({}^\theta L(i)) \cong \coker K_{i}(0),\]
where $K_i(x_1) = x_1^{d_i}c_i(x_1)\mathbf{K}_{V(i)}(x_1)$ and  $x_1 =  (X(i)X_1^{-1} - X(i)^{-1}X_1)$.
		\end{enumerate}
	\end{prop}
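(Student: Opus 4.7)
The plan is to dispatch the three items in turn, with the one-dimensional computation being the main effort. Throughout, I will use the presentation $\tfun(M) = \tbV[\beta]\otimes_{\oklr} M$ and the explicit form of the right action of $\oklr$ on $\tbV[\beta]$ obtained in Theorem~\ref{thm: bimodule}.

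For (1), the key observation is that on any graded finite-dimensional $\oklr$-module $M$ the generators $x_k$ must act nilpotently, since $\deg x_k = 2 > 0$. Thus for each idempotent sector $e(\nu) M$, the action of the completed polynomial algebra $\Oh_\nu$ factors through a finite-dimensional Artinian quotient. Since $\hV_\nu$ is, by construction, free of finite rank over $\Oh_\nu$, the tensor product $\hV_\nu \otimes_{\Oh_\nu} e(\nu)M$ is finite-dimensional, and summing over the finitely many $\nu \in \comp$ that support $M$ gives the claim.

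For (2), I will invoke Theorem~\ref{thm: KKK compatibility}, which provides a natural isomorphism $\tfun(\mathds{1} \acts M) \cong \tfun(\mathds{1}) \otimes \fun(M)$ of $\Uqk$-modules (where the right-hand side carries the $\Uqk$-action obtained from the tensor product of its two factors). Since $\mathds{1}$ is the regular representation of $\oklrv{0}=\cor$ and $\tbV[0]=\bsF$, we have $\tfun(\mathds{1}) \cong \bsF$ as the trivial $\Uqk$-module. Hence $\tfun(\mathds{1}) \otimes \fun(M)$ reduces to $\fun(M)$ with its $\Uqk$-action given by restriction along $\Uqk \subset \UqLg$, which is exactly $\fun(M)|_{\Uqk}$.

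For (3), I start from the presentation ${}^\theta L(i) = \oklrv{\ttt i}/\bl \oklr (1-e(i)) + \oklr\, x_1 + \oklr\,\tau_0\br$ that follows from Lemma~\ref{lem: 1dim mod oklr}. Applying $\tbV[\ttt i] \otimes_{\oklrv{\ttt i}} (-)$ collapses the $e(\nu)$-sector to $\nu = i$, kills right-multiplication by $x_1$, and kills the image of right-multiplication by $\tau_0$; using $\tau_0 e(i) = e(\theta(i)) \tau_0$, this gives
\[
\tfun({}^\theta L(i)) \;\cong\; \hV_i \big/ \bl \hV_i\, x_1 \;+\; \hV_{\theta(i)}\, \tau_0 \br .
\]
The first quotient $\hV_i / \hV_i\, x_1$ is the specialization at $X_1 = X(i)$, which under property~\ref{cond: Q0} is naturally identified with $V(\theta(i))$ as a $\Uqk$-module via the K-matrix $\mathbf{K}_{V(i)}$ regarded as a QSP isomorphism after suitable regularization. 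The image of $\hV_{\theta(i)}\, \tau_0$ will then be computed using the explicit form of the right action given by Corollary~\ref{cor: skew ring emb} and the formula $e(\nu)s_0 \mapsto \mathbf{K}^\nu$ from \eqref{eq: oklr action on bimodule}, keeping careful track of the variable change $X_1 \mapsto X_1^{-1}$, the relation $c_i(u)c_{\theta(i)}(-u)=1$, and the unitarity identity $\mathbf{K}_{V(i)}(X_1)^{-1} = \mathbf{K}_{V(\theta(i))}(X_1^{-1})$ of Theorem~\ref{thm:unitary-k}. The outcome is that, after evaluation at $x_1 = 0$, this image is precisely the image of the regularized operator $K_i(0) = x_1^{d_i}c_i(x_1)\mathbf{K}_{V(i)}(x_1)|_{x_1 = 0}$, yielding the stated isomorphism with $\coker K_i(0)$.

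The main obstacle is the bookkeeping in this last step: one must verify that the pole of $\mathbf{K}_{V(\theta(i))}$ cancels exactly against the zero of the factor $\widetilde P_{\theta(i)}(x_1)$ coming from the $\tau_0$-action (which is where the enhanced quiver's framing $\bm\lambda(i) = \dK{\theta(i)}$ enters), and that the resulting leading term is correctly identified with $K_i(0)$ under the natural identification of the two quotient spaces via the K-matrix. All other relations in the presentation of ${}^\theta L(i)$ either hold automatically or are subsumed by the two relations above, as the analysis in Lemma~\ref{lem: 1dim mod oklr} shows.
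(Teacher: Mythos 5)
Your treatments of (1) and (2) are sound. For (1) the paper declares the claim ``obvious''; your expansion via the positivity of $\deg x_k$ (forcing local nilpotence, hence a factorization of the $\Oh_\nu$-action through an Artinian quotient, and then finite rank of $\hV_\nu$ over $\Oh_\nu$) is a correct and useful unpacking of what is meant. Part (2) is verbatim the argument in the paper.

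For (3), the overall skeleton matches the paper: you reduce $\tfun({}^\theta L(i))$ to $\hV_i/\bl\hV_i\,x_1 + \hV_{\theta(i)}\tau_0\br$, and you aim to identify the second relation with the image of the regularized K-matrix $K_i(0)$. However, the intermediate step where you assert that $\hV_i/\hV_i\,x_1$ ``is naturally identified with $V(\theta(i))$ as a $\Uqk$-module via the K-matrix $\mathbf{K}_{V(i)}$ regarded as a QSP isomorphism after suitable regularization'' is not correct and introduces a genuine gap. The quotient $\hV_i/\hV_i\,x_1 = \hV_i/(X_1-X(i))\hV_i$ is simply $V(i)$ with the shifted action at $X_1=X(i)$; the regularized operator $K_i(0)$ is \emph{precisely} the map whose cokernel the proposition computes, and under the hypothesis ${}^\theta\bm\lambda(i)\geqslant 1$ it is guaranteed not to be an isomorphism. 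So it cannot serve to identify the two spaces, and the quotient must be kept as $V(i)$. The paper avoids this confusion: it leaves $\hV_i/\hV_i\,x_1$ as is, observes that the third relation contributes $\hV_j\,\tau_0 \subset \hV_i$, and identifies $\tau_0|_{\hV_j}$ (after specialization) directly with $K_i(0)$, so that the final quotient is the cokernel of that map inside $V(i)$. Your conclusion lands on the correct answer, but the route passes through an identification that is false under the very hypotheses that make the statement nontrivial; to repair it, simply drop the claimed identification with $V(\theta(i))$ and carry out the bookkeeping you gesture at, verifying that $\tau_0|_{\hV_j}$ specializes to $K_i(0)$, which is where the framing $\bm\lambda(i)=\dK{\theta(i)}$, the relation $c_i(u)c_{\theta(i)}(-u)=1$, and the unitarity identity enter.
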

	
	\begin{proof} 
		Part (1) is obvious and (2) follows immediately from Theorem \ref{thm: KKK compatibility}. Let us prove (3). 
		By Lemma \ref{lem: 1dim mod oklr}, the $\oklr$-module ${}^\theta L(i)$, with $\beta = i + \theta(i)$, is well-defined. Let us abbreviate $j = \theta(i)$. 
		By definition, 
		\[
		{}^\theta\mathsf{F}({}^\theta L(i)) = (\hV_i \oplus \hV_{j})\otimes_{\oklr}{}^\theta L(i),
		\]
		which is the quotient of $(\hV_i \oplus \hV_{j})\otimes {}^\theta L(i)$ by the subspace $N$ spanned by elements of the form $v \otimes r \cdot u_i - v \cdot r \otimes u_i$, for $r \in \oklr$. 
		Since $e(j)$ annihilates ${}^\theta L(i)$, it follows that $\hV_j\otimes {}^\theta L(i) \subset N$. 
		Next, since $x_1 e(i) = (X(i)X_1^{-1} - X(i)^{-1}X_1)e(i)$ annihilates ${}^\theta L(i)$, we get that $(X_1 - X(i))\hV_i\otimes{}^\theta L(i) \subset N$. Finally, since $\tau_0$ annihilates ${}^\theta L(i)$, it follows that $\tau_0 (\hV_j)\otimes {}^\theta L(i) \subset N$. But $\tau_0|_{\hV_j} = K_i(0)$. 
		Since $\oklr$ is generated by $e(i), e(j), x_1$ and $\tau_0$, we deduce that $N$ is spanned by the aforementioned subspaces and that $((\hV_i \oplus \hV_{j})\otimes{}^\theta L(i))/N \cong \coker K_i(0)$. 
	\end{proof} 
	
	
	\section{The BKR isomorphism in type $\mathsf{C}$} \label{s: BRK}
	
	In this section only, we allow $q$ to have a finite order, \ie $\ord(q) \in \Z_{\geqslant 3} \cup \{\infty\}$,
	and we set 
	\[
	\cor\coloneqq
	\begin{cases}
		\C[q]/(q^{\ord(q)}-1) & \mbox{if } \ord(q) < \infty\\
		\overline{\C(q)} & \mbox{if } \ord(q) = \infty\\
	\end{cases}
	\]
	The main result is the construction of an isomorphism 
	between certain completions of the $o$KLR algebras and affine Hecke algebras of type $\sfC$
	(Thm.~\ref{thm: bkr iso C}) in analogy with a similar result due to 
	Brundan-Kleshchev and Rouquier \cite{Brundan-Kleshchev, Rouquier-2KM}.  
	
	\subsection{Affine Hecke algebras of type $\mathsf{C}$}\label{ss:aff-Hecke-C} 
	
	We recall the definition of the (3-parameter) affine Hecke algebra of type $\mathsf{C}$. 
	
	\begin{definition} 
		Fix $p_0,p_1 \in \cor^\times$. The affine Hecke algebra $\mathcal{H}_{\mathsf{C}_n}(p_0,p_1)$ of type $\mathsf{C}_n$ is the $\cor$-algebra generated by $T_k$ $(0 \leqslant k \leqslant n-1)$ and $X_l^{\pm1}$ $(1 \leqslant l \leqslant n)$ subject to the relations: 
		\begin{itemize}
			\item quadratic relations: 
			\begin{alignat*}{3} &(T_k - q)(T_k + q^{-1}) = 0 &\quad \quad &(1 \leqslant k \leqslant n-1),\\
				&(T_0 - p_0)(T_0 + p_1^{-1}) = 0, \\
				\intertext{\item braid relations: }
				&T_kT_{k+1}T_k = T_{k+1}T_kT_{k+1} &\quad \quad &(1 \leqslant k \leqslant n-1),\\
				&(T_0T_1)^2 = (T_1T_0)^2,\\
				&T_{k}T_{k'} = T_{k'}T_k &\quad \quad &(k \neq k' \pm 1), \\
				\intertext{\item Laurent polynomial relations:}
				&X_l X_{l'} = X_{l'}X_l, &\quad \quad & (1 \leqslant l,l' \leqslant n), \\
				& X_lX_l^{-1} = 1 = X_l^{-1}X_l &\quad \quad & (1 \leqslant l \leqslant n), \\ 
				\intertext{\item mixed relations: } 
				& T_kX_kT_k = X_{k+1} &\quad \quad&(1 \leqslant k \leqslant n-1),\\ 
				&T_0X_1^{-1}T_0 = p_0p_1^{-1}X_1 + (p_0p_1^{-1} - 1)T_0, \\ 
				& T_kX_l = X_lT_k &\quad \quad& (l \neq k, k+1).
			\end{alignat*}
		\end{itemize} 
		The affine Hecke algebra of type $\mathsf{B}_n$ is the specialization $\mathcal{H}_{\mathsf{B}_n}(p) = \mathcal{H}_{\mathsf{C}_n}(p,p)$. The finite Hecke algebra $\HecBf$ of type $\mathsf{B}_n$ is the subalgebra of $\mathcal{H}_{\mathsf{B}_n}(p)$ generated by the $T_k$. 
	\end{definition} 
	
	\begin{remark} \label{rem: conv Kato VV}
		We use the conventions of \cite[\S A.1]{VV-HecB} (the assignment $q \mapsto p$, $p_0 \mapsto q_0$, $p_1 \mapsto q_1$ gives a matching between our parameters and those in \emph{loc.\ cit.}). This convention is the same as in \cite[\S2.3]{FLLLWW}, if one matches the parameters as follows: $q \mapsto q^{-1}$, $p_0 \mapsto q_0^{-1}$, $p_1 \mapsto q_1^{-1}$. 
		To match our conventions with those of \cite[Definition 2.1]{Kato-09}, one uses the assignment 
		\[ T_k \mapsto \mathbf{q}_2^{-1/2}T_k, \quad T_0 \mapsto \mathbf{q}_1^{-1}T_n, \quad p_1 \mapsto \mathbf{q}_1, \quad p_0 \mapsto -\mathbf{q}_0.
		\]
		\rmkend 
	\end{remark}

	The intertwiners $\Phi_k \in \mathcal{H}_{\mathsf{C}_n}(p_0,p_1) \otimes_{\mathcal{O}} \mathcal{K}$ are defined as (see, e.g., \cite[\S A.3]{VV-HecB}: 
	\begin{align} 
		\label{eq: int C1} \Phi_k =& \ 1 + \frac{X_k-X_{k+1}}{qX_k - q^{-1}X_{k+1}}(T_k - q) \quad \quad \qquad \qquad  (1 \leqslant k \leqslant n-1), \\ 
		\label{eq: int C2}
		\Phi_0 =& \ 1 + p_1\frac{X_1^2-1}{(X_1+p_0)(X_1-p_1)}(T_0 - p_0). 
	\end{align}

	\begin{prop} \label{pro: Hecke C pol rep}
		The following hold. 
		\begin{enumerate} 
			\item There is a an isomorphism of $\mathcal{K}$-algebras   
			\[ 
			\cor[\weyl_n] \ltimes \mathcal{K} \ \isoto \ \mathcal{H}_{\mathsf{C}_n}(p_0,p_1) \otimes_{\mathcal{O}} \mathcal{K}, \quad s_k \mapsto \Phi_k \quad (0 \leqslant k \leqslant n-1). 
			\]
			\item 
			The affine Hecke algebra $\mathcal{H}_{\mathsf{C}_n}(p_0,p_1)$ has a faithful representation on $\mathcal{O}$ given by 
			\begin{itemize}
				\item $X_{\pm 1}, \cdots, X_{\pm n}$ acting naturally by multiplication, 
				\item $T_1, \cdots, T_{n-1}$ acting via 
				\[
				(T_k-q) \cdot f = \frac{qX_k - q^{-1}X_{k+1}}{X_k - X_{k+1}}(s_k(f) - f), 
				\]
				\item $T_0$ acting via 
				\[
				(T_0-p_0) \cdot f = p_1^{-1}\frac{(X_1+p_0)(X_1-p_1)}{X_1^2-1}(s_0(f) - f). 
				\]
			\end{itemize}
		\end{enumerate}
	\end{prop}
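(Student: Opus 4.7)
The two parts are closely related: part (2) will essentially follow from (1) by transport of structure along the isomorphism, so I will spend most of the effort on (1).

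For part (1), the plan is to realize the desired isomorphism by constructing a homomorphism $\cor[\weyl_n] \ltimes \mathcal{K} \to \mathcal{H}_{\sfC_n}(p_0,p_1) \otimes_{\mathcal{O}} \mathcal{K}$ via $s_k \mapsto \Phi_k$ and $f \mapsto f$ for $f \in \mathcal{K}$, and then to check bijectivity by a free-module comparison. First I would verify the defining relations of the smash product on the generators: (a)~the Weyl group relations $\Phi_k^2 = 1$, the finite braid relations $\Phi_k\Phi_{k+1}\Phi_k = \Phi_{k+1}\Phi_k\Phi_{k+1}$ for $1\leqslant k \leqslant n-2$, the commutations $\Phi_k\Phi_{k'}=\Phi_{k'}\Phi_k$ for $|k-k'|\geqslant 2$, and the type $\sfC$ braid relation $(\Phi_0\Phi_1)^2=(\Phi_1\Phi_0)^2$; and (b)~the cross relations $\Phi_k f = s_k(f)\Phi_k$ for every $f \in \mathcal{K}$ and $0\leqslant k\leqslant n-1$. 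Each of these reduces to a direct computation inside $\mathcal{H}_{\sfC_n}(p_0,p_1) \otimes_{\mathcal{O}} \mathcal{K}$ using the quadratic relations $(T_k-q)(T_k+q^{-1})=0$ and $(T_0-p_0)(T_0+p_1^{-1})=0$ together with the mixed relations $T_k X_k T_k = X_{k+1}$ and $T_0 X_1^{-1} T_0 = p_0 p_1^{-1}X_1+(p_0p_1^{-1}-1)T_0$; for example, the identity $\Phi_k^2 = 1$ unfolds as a rank-one identity in the $(T_k-q)$-direction, and (b)~for $k\geqslant 1$ follows by checking on the generators $X_l^{\pm1}$ of $\mathcal{O}$ and extending to $\mathcal{K}$ by $\mathcal{O}$-linearity. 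The $\mathsf{B}_2$-braid relation for $\Phi_0,\Phi_1$ is the main technical step; it reduces, after a routine simplification, to the corresponding braid relation $(T_0T_1)^2=(T_1T_0)^2$ combined with the quadratic relations. (This computation is essentially carried out in \cite[\S A.3]{VV-HecB}.)

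Once the homomorphism $\varphi: \cor[\weyl_n] \ltimes \mathcal{K} \to \mathcal{H}_{\sfC_n}(p_0,p_1) \otimes_{\mathcal{O}} \mathcal{K}$ is in place, both sides are free $\mathcal{K}$-modules of rank $|\weyl_n|$: for the target this is the Bernstein--Lusztig PBW theorem for $\mathcal{H}_{\sfC_n}$ over $\mathcal{O}$ (basis $\{T_w\}_{w\in\weyl_n}$); for the source it is the definition of the smash product. It therefore suffices to prove surjectivity of $\varphi$, which in turn reduces to showing that each $T_k$ lies in the image. This is immediate from solving the defining formulae \eqref{eq: int C1}--\eqref{eq: int C2} for $T_k-q$ (respectively $T_0-p_0$) in terms of $\Phi_k-1$ (respectively $\Phi_0-1$) and noting that the resulting coefficient is an invertible element of $\mathcal{K}$, so
\[
T_k = q + \frac{qX_k-q^{-1}X_{k+1}}{X_k-X_{k+1}}(\Phi_k-1), \qquad T_0 = p_0 + p_1^{-1}\frac{(X_1+p_0)(X_1-p_1)}{X_1^2-1}(\Phi_0-1).
\]

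For part (2), note that $\cor[\weyl_n]\ltimes\mathcal{K}$ acts faithfully on $\mathcal{K}$ by $w\cdot f = w(f)$ and multiplication; this is a standard property of skew group algebras over a field on which the group acts faithfully, as is the case here for $\weyl_n\curvearrowright\mathcal{K}$. Transporting this action along the isomorphism $\varphi^{-1}$ of part (1) yields a faithful action of $\mathcal{H}_{\sfC_n}(p_0,p_1)\otimes_{\mathcal{O}}\mathcal{K}$ on $\mathcal{K}$, in which $T_k$ acts by exactly the two displayed formulae (obtained by substituting $\Phi_k = s_k$). To see that this action restricts to $\mathcal{O}$, one only needs to verify integrality of the two rational coefficients in $\mathcal{O}$: for $1\leqslant k\leqslant n-1$, the Laurent polynomial $X_k-X_{k+1}$ divides $s_k(f)-f$; for $k=0$, since $s_0(X_1)=X_1^{-1}$, the element $s_0(f)-f$ vanishes at $X_1=\pm1$ and so is divisible by $X_1^2-1$ as a Laurent polynomial. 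Finally, faithfulness on $\mathcal{O}$ follows from faithfulness on $\mathcal{K}$: if $h\in\mathcal{H}_{\sfC_n}(p_0,p_1)$ kills $\mathcal{O}$, then by $\mathcal{K}$-linearity it kills $\mathcal{K}$, so $h\otimes 1 = 0$ in $\mathcal{H}_{\sfC_n}(p_0,p_1)\otimes_{\mathcal{O}}\mathcal{K}$, and freeness of $\mathcal{H}_{\sfC_n}(p_0,p_1)$ over $\mathcal{O}$ forces $h=0$. The main obstacle in the whole argument is the verification of the $\mathsf{B}_2$-braid relation $(\Phi_0\Phi_1)^2=(\Phi_1\Phi_0)^2$, which although reducible to the corresponding identity in $\mathcal{H}_{\sfC_n}(p_0,p_1)$ requires a careful bookkeeping of the denominators in $\mathcal{K}$.
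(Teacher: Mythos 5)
Your proposal fills in, with essentially the standard argument, what the paper treats as a pair of citations: part (1) is referred to \cite[\S A.3]{VV-HecB} and part (2) to \cite[Thm.~2.7]{Kato-09} (with a change of conventions). The construction of the homomorphism via the intertwiner relations, the free-module rank comparison over $\mathcal{K}$, the surjectivity argument from invertibility of the rational coefficients, and the integrality check guaranteeing $\mathcal{O}$-stability in part (2) are all correct and are the same mechanism those references use.

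There is one imprecise step in your faithfulness argument for part (2). The justification ``if $h$ kills $\mathcal{O}$, then by $\mathcal{K}$-linearity it kills $\mathcal{K}$'' is not valid as stated: the action of $\mathcal{H}_{\mathsf{C}_n}(p_0,p_1)$ on $\mathcal{O}$ is not $\mathcal{O}$-linear (the $T_k$ do not commute with multiplication by $X_l$), so nothing extends ``by linearity.'' The correct argument is short: viewing $h$ inside the skew group ring via the isomorphism of part (1), write $h = \sum_{w\in\weyl_n} b_w\, w$ with $b_w\in\mathcal{K}$. The hypothesis that $h$ annihilates $\mathcal{O}$ says $\sum_w b_w\, w(f) = 0$ for all $f\in\mathcal{O}$. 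Since $\weyl_n$ acts faithfully on $\Z^n$, the automorphisms $w$ restrict to pairwise distinct characters on the group $\langle X_1,\dots,X_n\rangle\cong\Z^n\subset\mathcal{O}^\times$, so Dedekind--Artin independence of characters forces all $b_w = 0$ and hence $h=0$. Alternatively one can argue by triangularity of the operators $T_w$ on $\mathcal{O}$ with respect to the Bruhat order. Either repair is routine, so the gap is minor; the rest of the proposal is sound and matches the standard route.
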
 
	
	\begin{proof} 
		The first statement can be found in, e.g., \cite[\S A.3]{VV-HecB}. The second statement is \cite[Thm.~ 2.7]{Kato-09}, taking into account the difference in conventions explained in Remark \ref{rem: conv Kato VV}. 
	\end{proof}
	
	\subsection{BKR-type isomorphism} \label{ss:BKR-iso}
	We establish a Brundan--Kleshchev--Rouquier-style isomorphism between completions of orientifold KLR algebras and affine Hecke algebras of type $\mathsf{C}$, generalizing \cite[Thm.~ A.4]{VV-HecB} and \cite[Thm.~ 1.1]{PAW-B}. 
	
	Assume $p_0,p_1 \neq \pm 1$. We now determine the datum defining $\oklr$. Let $\xi \in \cor^\times$ and set 
	\[ J = \{ \xi^{\pm 1} q^{2k} \mid k \in \Z \}, \quad \theta \colon i \mapsto i^{-1}, \quad a_{ij} = \delta_{j = q^2i}, \quad \bm\lambda(i) = \delta_{i=p_1}+\delta_{i=-p_0}. \] 
	The resulting quiver $\Gamma = (J,\Omega)$ can, depending on $\xi$ and the order of $q$, be of the following types:  
	\renewcommand{\arraystretch}{1.5}
	\setlength{\tabcolsep}{6pt}

	\[
	\begin{tabular}{ |l|l|l|l|l| }
		\hline
		& $\ord(q)$ & $\xi$ & $\Gamma$ & $J^\theta$ \\
		\hline
		(1) & $\infty$ & $1$ & $A_\infty$ & $\{1\}$ \\
		\hline
		(2) & $\infty$ & $q$ & $A_\infty$ & $\varnothing$ \\
		\hline
		(3) & $\infty$ & $\notin \{\pm q^{\Z}\}$ & $A_\infty \times A_\infty$ & $\varnothing$ \\
		\hline
		(4) & $2m$ & $1$ & $A_m^{(1)}$ & $\{\pm1\}$ \\
		\hline
		(5) & $2m$ & $q$ & $A_m^{(1)}$ & $\varnothing$ \\
		\hline
		(6) & $2m$ & $\notin \{q^{\Z}\}$ & $A_m^{(1)} \times A_m^{(1)}$ & $\varnothing$ \\
		\hline
		(7) & $m$ odd & $1$ & $A_m^{(1)}$ & $\{1\}$ \\
		\hline
		(8) & $m$ odd & $\notin \{\pm q^{\Z}\}$ & $A_m^{(1)} \times A_m^{(1)}$ & $\varnothing$ \\
		\hline 
	\end{tabular} 
	\]
\vspace{0.3cm}
	
	Given a self-dual dimension vector $\beta$ with $\Norm{\beta}_\theta = n$, let $\oklr$ be the orientifold KLR algebra associated to the datum $(\Gamma, \theta, \beta, \bm\lambda)$. Choose $X(-)$ in  the $J$--datum to be the identity function. 
	\renewcommand{\arraystretch}{1}

	\begin{theorem} \label{thm: bkr iso C} 
		The isomorphism \eqref{x-X map} extends to an algebra isomorphism 
		\begin{equation} \label{eq: oKLR-Hecke comp iso}
			\oklr \otimes_{\tbP_{\beta}} \tbPh_{\beta} \ \isoto \ \mathcal{H}_{\mathsf{C}_n}(p_0,p_1) \otimes_{\mathcal{O}} \tOhb
		\end{equation}
		given by: 
		\begin{align*} 
			\tau_k e(\nu) \mapsto& \ \left\{ \begin{array}{ll}
				\left( \frac{X_k}{X(\nu_{k+1})} + \frac{X(\nu_k)}{X_{k+1}} \right)^{-1} \left( \frac{X_{k+1}}{X_k} - 1 \right)^{-1} \left({\Phi}_k - 1\right) e(\nu)  & \mbox{ if } \nu_k = \nu_{k+1}, \\ \\
				\left( \frac{X_k}{X(\nu_{k+1})} + \frac{X(\nu_k)}{X_{k+1}} \right) \left( \frac{X(\nu_{k+1})}{X(\nu_k)} - \frac{X_{k+1}}{X_k}\right) {\Phi}_ke(\nu) & \mbox{ if } \nu_{k+1} = q^2\nu_k, \\ \\ 
				{\Phi}_ke(\nu) & \mbox{ otherwise, }
			\end{array}\right. \\ \\
			\tau_0 e(\nu) \mapsto& \ \left\{ \begin{array}{ll} 
				X(\nu_1)\left( X_1^{-1} - X_1 \right)^{-1} \left({\Phi}_0 - 1 \right) e(\nu) & \qquad \qquad \quad \ \ \mbox{ if } \nu_1 = \theta(\nu_1), \\ \\
				\left(\frac{X_1}{X(\nu_1)} - \frac{X(\nu_1)}{X_1} \right)^{\bm\lambda(\nu_1)}{\Phi}_0 e(\nu) &  \qquad \qquad \quad \ \  \mbox{ otherwise. } 
			\end{array}\right. 
		\end{align*}
	\end{theorem}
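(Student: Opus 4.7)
The plan is to exploit the fact that both sides embed into a common skew group algebra via their respective polynomial representations, and then to verify the isomorphism by matching generators under these embeddings.

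First, I would observe that the change of variables \eqref{x-X map} already furnishes a $\weyl_n$-equivariant isomorphism $\tbPh_{\beta} \isoto \tOhb$, which automatically lifts to an isomorphism of skew group algebras $\skewrr \isoto \cor[\weyl_n] \ltimes \tOhb$. By Corollary \ref{cor: skew ring emb}, the LHS of \eqref{eq: oKLR-Hecke comp iso} sits inside $\skewrr$; by Proposition \ref{pro: Hecke C pol rep}(1), the RHS sits inside $\cor[\weyl_n] \ltimes \tOhb$. So it suffices to prove that the proposed image of every generator $\tau_k e(\nu)$ coincides, inside the common skew group algebra, with the image of $\tau_k e(\nu)$ under Corollary \ref{cor: skew ring emb}, and that the resulting map is a bijection onto its image.

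For the matching of generators, I would proceed by direct computation in three cases. For $1 \leqslant k \leqslant n-1$ with $\nu_k \neq \nu_{k+1}$ (distinguishing $\nu_{k+1} = q^2\nu_k$ from the generic case), the identity $x_k - x_{k+1} = (X_k/X(\nu_{k+1}) + X(\nu_k)/X_{k+1})(X_{k+1}/X_k - X(\nu_{k+1})/X(\nu_k))$ together with the explicit form of $\Phi_k$ given in \eqref{eq: int C1} reduces the verification to comparing the rational prefactor against $\widetilde{P}_{\nu_k, \nu_{k+1}}$, which is controlled by $a_{\nu_k, \nu_{k+1}} = \delta_{\nu_{k+1} = q^2\nu_k}$; the auxiliary cocycle $c_{ij}$ can be chosen along the lines of \cite{VV-HecB} to make the equality hold on the nose. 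For $\nu_k = \nu_{k+1}$, the relevant comparison is between $(x_k - x_{k+1})^{-1}(s_k - 1)$ on the $o$KLR side and the rational multiple of $(\Phi_k - 1)$ from \eqref{eq: int C1}, which is a direct calculation once one writes both in $\cor[\weyl_n] \ltimes \tOhb$.

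The $\tau_0$ case is analogous but sensitive to the orientifold structure. When $\nu_1 \neq \theta(\nu_1)$, I would factor the quadratic polynomial $(X_1+p_0)(X_1-p_1)$ in \eqref{eq: int C2} as an invertible element of $\Oh_\nu$ times $(X_1 - X(\nu_1))^{\bm\lambda(\nu_1)}$, using the definition $\bm\lambda(i) = \delta_{i=p_1} + \delta_{i=-p_0}$; after substituting $x_1 = X(\nu_1)/X_1 - X_1/X(\nu_1)$, the result matches $\widetilde{P}_{\nu_1}(x_1) s_0$ with an appropriate choice of $c_{\nu_1}$. When $\nu_1 = \theta(\nu_1)$, the hypothesis $\nu_1 \in J^\theta$ forces $\nu_1 \in \{\pm 1\}$, so the quadratic polynomial in \eqref{eq: int C2} simplifies and the proposed formula matches $x_1^{-1}(s_0 - 1)$ on the nose.

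Finally, for bijectivity, I would argue that the map is surjective by verifying that the images of $X_l^{\pm1}$, $T_k$ ($1 \leqslant k \leqslant n-1$), and $T_0$ all lie in the image (the first two follow exactly as in the ordinary KLR case \cite{Brundan-Kleshchev, Rouquier-2KM}; $T_0$ is handled using the inverse of the $\tau_0$ formulas above, combined with the fact that the polynomial prefactors are invertible in $\tOhb$). Injectivity follows from the faithfulness of the polynomial representation of $\oklr$ on $\tbPh_\beta$ (Proposition \ref{pro: polrep oklr}, suitably completed) together with Proposition \ref{pro: Hecke C pol rep}(2). The main obstacle I anticipate is the careful bookkeeping in the $\tau_0$ case and the $B$-type braid relation, where the interplay between the two parameters $p_0, p_1$, the involution $\theta$, and the framing $\bm\lambda$ must be tracked simultaneously; a clean way to package this, following \cite{PAW-B}, is to verify the braid relation directly in the skew group algebra using the intertwiner calculus for $\Phi_0, \Phi_1$, which reduces it to an identity in $\cor(X_1, X_2)$.
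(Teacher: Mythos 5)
Your proposal follows the same route as the paper's proof: both completed algebras embed faithfully into a common skew group algebra via their polynomial representations (Corollary~\ref{cor: skew ring emb} and Proposition~\ref{pro: Hecke C pol rep}), and the isomorphism is verified by comparing the images of the generators under the identification \eqref{x-X map}, with the cocycles $c_{ij}$, $c_i$ of \eqref{eq: c-ij}--\eqref{eq: c-i} taken to be identically~$1$; this is precisely what the paper's ``direct calculation'' amounts to. One inaccuracy in your surjectivity sketch: the polynomial prefactors are \emph{not} always invertible in $\tOhb$ --- for instance when $\nu_{k+1}=q^2\nu_k$ the factor $x_{k+1}-x_k$ lies in the maximal ideal of $\Oh_\nu$, and similarly $(-x_1)^{\bm\lambda(\nu_1)}$ is non-invertible when $\bm\lambda(\nu_1)\geqslant 1$ --- so recovering $T_k e(\nu)$ from the $\tau_k$-images requires also using the adjacent idempotent $e(s_k\nu)$ (resp.\ $e(s_0\nu)$), as in the standard BKR bookkeeping in the references you cite; this is a minor fix rather than a gap.
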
 
	
	
	\begin{proof} 
		The completed algebra on the LHS of \eqref{eq: oKLR-Hecke comp iso} has a faithful representation on $\tbPh_{\beta}$ while the completed algebra on the RHS has a faithful representation on $\tOhb$. Therefore, it suffices to check that the actions of the generators agree under the isomorphism \eqref{x-X map}. This follows by a direct calculation using Proposition \ref{pro: polrep oklr}, Proposition \ref{pro: Hecke C pol rep} and \eqref{eq: int C1}--\eqref{eq: int C2}. 
	\end{proof}
	
	\begin{remark}
		A weaker statement about a Morita equivalence between orientifold KLR algebras and blocks of affine Hecke algebras of type $\mathsf{C}$, in the case when $\theta$ has no fixed points, can be found in \cite[Thm.~ A.4]{VV-HecB}. A proof of the isomorphism of cyclotomic quotients, in type $\mathsf{B}$ case, for any $\theta$, appeared in \cite[Thm.~ 1.1]{PAW-B}. 
		The idea of comparing the polynomial representations goes back to \cite{Rouquier-2KM}
		(see also \cite{Miemietz-Stroppel, Maksimau-Stroppel}). 
		\rmkend
	\end{remark}
	
	
	\section{Boundary Schur-Weyl duality in type $\sfA$}\label{s:BSW-A} 
	
	In this section we study a particular instance of the boundary Schur-Weyl duality from Section~\ref{s: boundary SW} in the case $\g=\mathfrak{sl}_N$. Specifically, we consider
	only quasi-split affine QSP subalgebras of type $\mathsf{AIII}$ with the unitary K-matrices  satisfying the
	standard reflection equation on the first fundamental representation.
	
	\subsection{The fundamental representation}
	Recall that the first fundamental representation $\sfV_{\omega_1}$ of $U_q\mathfrak{sl}_N$ extends, through 
	the evaluation morphism, to a representation of the quantum loop algebra $\UqLsl{N}$.
	More precisely, we denote by $\frep{}$ the $N$-dimensional $\UqLsl{N}$-module with
	basis $u_1,\cdots, u_N$ and action given by
	\begin{align*}
		E_i\cdot u_r =& \ \delta(r \equiv i+1 \textrm{ mod } N) \ u_i, \\
		F_i\cdot u_r =& \ \delta(r \equiv i \textrm{ mod } N) \ u_{i+1}, \\
		K_i\cdot u_r =& \ q^{\delta(r \equiv i \textrm{ mod } N) - \delta(r \equiv i+1 \textrm{ mod } N)} \ u_{r}, 
	\end{align*}
	where for simplicity we are adopting the cyclic notation on the indices of the 
	Chevalley generators and the basis vectors $u_k$ 
	(\eg $u_N=u_0$, $E_N=E_0$, and so on). More precisely, $\frep{}=\operatorname{ev}_1^*(\sfV_{\omega_1})$, where
	$\operatorname{ev}_1 \colon \UqLsl{N}\to U_q\mathfrak{sl}_N$ is the evalution morphism at $z=1$.
	We set $\frep{z}\coloneqq\frep{}\ten\bsF[z,z^{-1}]$ and $\shrep{\frep{}}{z}\coloneqq
	\frep{}\ten\bsF(z)$ endowed with the shifted $\UqLsl{N}$-action 
	(cf.~\S\ref{ss:spectral-R}).\\
	
	The explicit formulae for the unitary R-matrix on $\frep{}$ is well-known and due to Jimbo
	\cite{jimbo-85}. 
	More precisely, 
	\[
	\rRMv{}{z,w} \colon \shrep{\frep{}}{z}\ten\shrep{\frep{}}{w}
	\to \shrep{\frep{}}{w}\ten\shrep{\frep{}}{z}
	\]
	is given by $u_r\ten u_r\mapsto u_r\ten u_r$ and 
	\begin{equation}\label{eq: R mat fund1} 
		u_r \otimes u_s \quad\mapsto\quad
		\displaystyle \frac{(1-q^2)w^{\delta(r > s)}z^{\delta(r<s)}}{w-q^2z}\cdot  u_r \otimes u_s + \frac{q(w-z)}{w-q^2z}\cdot u_s \otimes u_r 
		\qquad (r\neq s)
	\end{equation} 
	In particular, $\rRMv{}{z,w}$ is a rational function in $w/z$ with only one simple pole 
	at $w/z=q^2$.
	
	\subsection{Normalised parameters and K-matrices} \label{ss:fund-rep-K-mx}
	
	Let $(X,\tau)$ be an affine Satake diagram of type $\sfA$ and $\Uqk\subset\UqLsl{N}$ 
	the corresponding QSP subalgebra with parameters
	$(\parc{},\pars{})\in\Parsetc\times\Parsets$ (cf.~\S\ref{ss:qsp}). 
	We are interested in the K-matrices on $\shrep{\frep{}}{z}$ supported on $\Uqk$. 
	From the point of view of the combinatorial model described in \S\ref{ss:combinatorics}, it is convenient to consider $\UqLsl{N}$-modules
	$V$ that are \emph{fixed} by the chosen twisting operator, \ie $\psi^*(V)=V$, or conversely
	to consider only twisting operators that fix a given module. In both cases, the 
	corresponding K-matrix gives rise to a solution of the {\em standard} reflection equation (see, \eg \S\ref{ss:K-mx-KR}).
	Since $\frep{}$ is small, there exists a distinguished twisting operator $\psi$ 
	which fixes it.
	Note, however, that this requires to impose a normalisation condition
	$\parc{}(\drv{})=\beta$ for a  uniquely determined $\beta\in\bsF^\times$, cf.~\cite[\S 7]{appel-vlaar-22}.\\
	
	Therefore, for any QSP subalgebra in $\UqLsl{N}$ with normalized parameter $\parc{}$, we obtain a rational QSP intertwiner
	\[
	\rKM{}{z} \colon \shrep{\frep{}}{z}\to\shrep{\frep{}}{z^{-1}}
	\]
	which satisfies the standard reflection equation. By generic QSP irreducibility of the
	first fundamental representation, we recover in this way the QSP intertwiners explicitly 
	described by Regelskis-Vlaar in \cite{regelskis-vlaar-16,regelskis-vlaar-18}.
	By direct inspection, it then follows that the operators $\rKM{}{z}$ can be normalized to be 
	non-vanishing, unitary and such that $\rKM{}{1}=\id$. In particular, the condition 
	\ref{cond: Q3} holds.
	
	\subsection{Quasi-split affine QSP subalgebras of type $\mathsf{AIII}$}\label{ss:AIII-qsp}
	
	We provide the explicit formulae of the unitary K-matrix for the first fundamental 
	representation in the case of quasi-split affine QSP subalgebra of type $\mathsf{AIII}$.
	Our main reference is \cite{regelskis-vlaar-16, regelskis-vlaar-18} as explained above. 
	We note however that the K-matrices in this case first appeared in \cite{abad-rios-95}.
	
	\subsubsection{The non-restrictable case} \label{sec: non-restr case}
	
	Recall that a quasi-split QSP subalgebra is simply determined by a non-trivial involution on 
	the Dynkin diagram. We first consider the case where the affine node, which is numbered $N$
	according to our cyclic notation, is not fixed by the involution, \ie we consider the case of 
	$N$ even and Satake diagram
	
	\eq \label{eq: Satake aff diag 3}
	\begin{tikzpicture}
		\matrix [column sep={0.6cm}, row sep={0.5 cm,between origins}, nodes={draw = none,  inner sep = 3pt}]
		{
			&\node(U1) [draw, circle, fill=white, scale=0.6, label = $N$] {};
			&\node(U2) [draw, circle, fill=white, scale=0.6, label = $1$] {};
			&\node(U3) {$\cdots$};
			&\node(U4)[draw, circle, fill=white, scale=0.6, label =$\frac{N}{2}-2$] {};
			&\node(U5)[draw, circle, fill=white, scale=0.6, label =$\frac{N}{2}-1$] {};
			\\
			&&&&&
			\\
			&\node(L1) [draw, circle, fill=white, scale=0.6, label =below:$N-1$] {};
			&\node(L2) [draw, circle, fill=white, scale=0.6, label =below:$N-2$] {};
			&\node(L3) {$\cdots$};
			&\node(L4)[draw, circle, fill=white, scale=0.6, label =below:$\frac{N}{2}+1$] {};
			&\node(L5)[draw, circle, fill=white, scale=0.6, label =below:$\frac{N}{2}$] {};
			\\
		};
		\begin{scope}
			\draw (U1) -- node {} (L1); 
			\draw (U1) -- node  {} (U2);
			\draw (U2) -- node  {} (U3);
			\draw (U3) -- node  {} (U4);
			\draw (U4) -- node  {} (U5);
			\draw (U5) -- node  {} (L5);
			\draw (L1) -- node  {} (L2);
			\draw (L2) -- node  {} (L3);
			\draw (L3) -- node  {} (L4);
			\draw (L4) -- node  {} (L5);
			\draw (L1) edge [color = blue,<->, bend right, shorten >=4pt, shorten <=4pt] node  {} (U1); 
			\draw (L2) edge [color = blue,<->, bend right, shorten >=4pt, shorten <=4pt] node  {} (U2);
			\draw (L4) edge [color = blue,<->, bend left, shorten >=4pt, shorten <=4pt] node  {} (U4);
			\draw (L5) edge [color = blue,<->, bend left, shorten >=4pt, shorten <=4pt] node  {} (U5);
		\end{scope}
	\end{tikzpicture}
	\eneq 
	For any $1\leqslant r\leqslant N$, we set $\bar{r}\coloneqq N-r$. 
	The corresponding QSP subalgebra $\Uqk$ is
	generated by $K_rK_{\bar{r}-1}^{-1}$ $(0 \leqslant r \leqslant N/2-1)$ and 
	\[
	B_r \coloneqq F_r + \parc{r}E_{\bar{r}-1}K_r^{-1} + \pars{r}K_r^{-1} \quad (1 \leqslant r \leqslant N)
	\]
	where the parameters $\parc{},\pars{}$ are determined according to Remark~\ref{rmk:parameters}(1) by the following assignments ($\lambda,\mu\in\bsF^{\times}$):\footnote{Note that, both here and in 
		\S\ref{sec: restr case}, with respect to the formulae given in
		\cite[\S 9.3]{regelskis-vlaar-16}, we are setting the shifting parameter 
		$\eta$ equal to 1 and we are choosing suitable dressing parameters $\omega_i$.}
	\begin{align}
		\parc{N} = q,\quad \parc{N-1} = \mu^{-2},\quad
		\parc{N/2+1} = q\mu\lambda^{-1},\quad \parc{N/2-1} = \lambda\mu,
	\end{align}
	\begin{align}
		\pars{N}=\frac{\mu-\mu^{-1}}{q-q^{-1}},\quad 
		\pars{N/2}=\frac{\lambda-\lambda^{-1}}{q-q^{-1}},
		\quad
		\pars{r}=0\quad \left( 0 \leqslant r < \frac{N}{2} \right)
	\end{align}
	Following \cite[\S 9]{regelskis-vlaar-16}, the compact form of the K-matrix $\rKM{}{z}$ 
	on $\frep{}$ supported on $\Uqk$ is given by: 
	\begin{align} \label{eq: RV V1 K-matrix}
		\rKM{}{z} =& \ \id + \frac{z-z^{-1}}{(\lambda \mu - z)}\left( E_{NN} + \frac{M_2}{(\lambda^{-1} + (\mu z)^{-1})} \right)
	\end{align}
	where
	\begin{align}
		M_2 =& \ 
		\sum_{1 \leqslant i \leqslant N/2-1} (\lambda E_{ii} + \lambda^{-1} E_{N-i,N-i} + \mu E_{i,N-i} + \mu^{-1} E_{N-i,i}). 
	\end{align} 
	It is convenient for us to consider the parameters $p_0 = \lambda \mu^{-1}$ and $p_1 = \lambda \mu$. Then, formula \eqref{eq: RV V1 K-matrix} reads $u_{N/2}\mapsto u_{N/2}$, $u_N\mapsto 
	\left(1+\frac{z-z^{-1}}{p_1 - z}\right) \cdot u_N$, and 
	\begin{equation}
		\label{eq: explicit K matrix 2} 
		u_r \mapsto
		\frac{(p_0^{-1}p_1-1) + (p_1-p_0^{-1})z^{\delta(r < \bar{r})}z^{-\delta(r > \bar{r})}}{(p_1-z)(p_0^{-1} + z^{-1})}\cdot u_r + 
		\frac{(p_0^{-1}p_1)^{\delta(r < \bar{r})}(z-z^{-1})}{(p_1-z)(p_0^{-1}+z^{-1})}
		\cdot u_{\bar{r}} 
	\end{equation}
	if $1 \leqslant r < N/2$.
	
	\begin{remark}
	Since the QSP subalgebra is non--restrictable, the K--matrix $\rKM{}{z}$
	does not fit into the framework described in Section~\ref{s:rational}. It is
	expected, however, that $\rKM{}{z}$ is also induced by the action of a 
	universal K--matrix.\rmkend
	\end{remark}
	
	\subsubsection{The restrictable case} \label{sec: restr case} 
	For $N$ even, we consider the Satake diagram 
	\eq \label{eq: Satake aff diag 1}
	\begin{tikzpicture} 
		\matrix [column sep={0.6cm}, row sep={0.5 cm,between origins}, nodes={draw = none,  inner sep = 3pt}]
		{ 
			&\node(U1) [draw, circle, fill=white, scale=0.6, label = $1$] {};
			&\node(U2) {$\cdots$};
			&\node(U3)[draw, circle, fill=white, scale=0.6, label =$\frac{N}{2}-1$] {};
			\\
			\node(L)[draw, circle, fill=white, scale=0.6, label =$N$] {};
			&&&&
			\node(R)[draw, circle, fill=white, scale=0.6, label =$\frac{N}{2}$] {};
			\\
			&\node(L1) [draw, circle, fill=white, scale=0.6, label =below:$N-1$] {};
			&\node(L2) {$\cdots$};
			&\node(L3)[draw, circle, fill=white, scale=0.6, label =below:$\frac{N}{2}+1$] {};
			\\
		};
		\begin{scope}
			\draw (L) -- node  {} (U1);
			\draw (U1) -- node  {} (U2);
			\draw (U2) -- node  {} (U3);
			\draw (U3) -- node  {} (R);
			\draw (L) -- node  {} (L1);
			\draw (L1) -- node  {} (L2);
			\draw (L2) -- node  {} (L3);
			\draw (L3) -- node  {} (R);
			\draw (L) edge [color = blue, loop left, looseness=40, <->, shorten >=4pt, shorten <=4pt] node {} (L);
			\draw (R) edge [color = blue,loop right, looseness=40, <->, shorten >=4pt, shorten <=4pt] node {} (R);
			\draw (L1) edge [color = blue,<->, bend right, shorten >=4pt, shorten <=4pt] node  {} (U1);
			\draw (L3) edge [color = blue,<->, bend left, shorten >=4pt, shorten <=4pt] node  {} (U3);
		\end{scope}
	\end{tikzpicture}
	\eneq  
	
	The corresponding QSP subalgebra $\Uqk$ is generated by $K_rK_{\bar{r}-1}^{-1}$ $(1 \leqslant r < N/2)$ and 
	\begin{equation} \label{eq: Br generators coid} 
		B_r = F_r + \parc{r}E_{\bar{r}-1}K_r^{-1} + \pars{r}K_r^{-1} \quad (1 \leqslant r \leqslant N),
	\end{equation} 
	where $\bar{r} = N+1-r$ and the parameters $\parc{}$ and $\pars{}$ are determined by
	the assignments:
	\begin{align}
		\parc{N} = q^{-1}\mu^{-2},\quad 
		\parc{N/2} = q^{-1}\mu^2, \quad
		\parc{r} = 1\quad\left(1 \leqslant r < \frac{N}{2}\right)
	\end{align}
	\begin{align}
		\pars{N}= \frac{1 - \mu^2}{q - q^{-1}},\quad
		\pars{\frac{N}{2}} =\frac{(\lambda\mu)^{-1}-\lambda\mu^{-1}}{q-q^{-1}},\quad
		\pars{r}=0\quad \left( 0 \leqslant r < \frac{N}{2} \right)
	\end{align}
	
	For $N$ odd, we consider the Satake diagram 
	\eq \label{eq: Satake aff diag 2}
	\begin{tikzpicture}
		\matrix [column sep={0.6cm}, row sep={0.5 cm,between origins}, nodes={draw = none,  inner sep = 3pt}]
		{
			&\node(U1) [draw, circle, fill=white, scale=0.6, label = $1$] {};
			&\node(U3) {$\cdots$};
			&\node(U4)[draw, circle, fill=white, scale=0.6, label =$\frac{N-3}{2}$] {};
			&\node(U5)[draw, circle, fill=white, scale=0.6, label =$\frac{N-1}{2}$] {};
			\\
			\node(L)[draw, circle, fill=white, scale=0.6, label =$N$] {};
			&&&&&
			\\
			&\node(L1) [draw, circle, fill=white, scale=0.6, label =below:$N-1$] {};
			&\node(L3) {$\cdots$};
			&\node(L4)[draw, circle, fill=white, scale=0.6, label =below:$\frac{N+3}{2}$] {};
			&\node(L5)[draw, circle, fill=white, scale=0.6, label =below:$\frac{N+1}{2}$] {};
			\\
		};
		\begin{scope}
			\draw (L) -- node  {} (U1);
			\draw (U1) -- node  {} (U3);
			\draw (U3) -- node  {} (U4);
			\draw (U4) -- node  {} (U5);
			\draw (U5) -- node  {} (L5);
			\draw (L) -- node  {} (L1);
			\draw (L1) -- node  {} (L3);
			\draw (L3) -- node  {} (L4);
			\draw (L4) -- node  {} (L5);
			\draw (L) edge [color = blue, loop left, looseness=40, <->, shorten >=4pt, shorten <=4pt] node {} (L);
			\draw (L1) edge [color = blue,<->, bend right, shorten >=4pt, shorten <=4pt] node  {} (U1);
			\draw (L4) edge [color = blue,<->, bend left, shorten >=4pt, shorten <=4pt] node  {} (U4);
			\draw (L5) edge [color = blue,<->, bend left, shorten >=4pt, shorten <=4pt] node  {} (U5);
		\end{scope}
	\end{tikzpicture}
	\eneq 
	In this case, the parameters of $\Uqk$ are determined by
	\begin{align}
		\parc{N} = q^{-1}\mu^2,\quad 
		\parc{N/2} = q^{-1}\lambda\mu^{-1}, \quad
		\parc{r} = 1\quad \left( 1 \leqslant r < \frac{N}{2}\right)
	\end{align}
	\begin{align}
		\pars{N}= \frac{1 - \mu^2}{q - q^{-1}},\quad
		\pars{\frac{N}{2}} =\frac{(\lambda\mu)^{-1}-\lambda\mu^{-1}}{q-q^{-1}},\quad
		\pars{r}=0\quad \left( 0 \leqslant r < \frac{N}{2} \right)
	\end{align}
	In both cases,  the K-matrix is given by  
	\begin{align} \label{eq: RV V1 K-matrix 2}
		\mathbf{K} =& \ \id + \frac{(z-z^{-1})}{(\lambda \mu - z)}\frac{M_2}{(\lambda^{-1} + (\mu z)^{-1})},
	\end{align}
	where
	\begin{align}
		M_2 =& \ 
		\sum_{1 \leqslant i \leqslant N/2} (\lambda E_{ii} + \lambda^{-1} E_{N+1-i,N+1-i} + \mu E_{i,N+1-i} + \mu^{-1} E_{N+1-i,i}). 
	\end{align} 
	As before, we set $p_0 = \lambda \mu^{-1}$ and $p_1 = \lambda \mu$. Then, the
	formula \eqref{eq: RV V1 K-matrix 2} reads $u_r\mapsto u_{\bar{r}}$ if $r=\bar{r}$
	and 
	\begin{equation} \label{eq: explicit K}
		u_r \mapsto
		\frac{(p_0^{-1}p_1-1) + (p_1-p_0^{-1})z^{\delta(r < \bar{r})}z^{-\delta(r > \bar{r})}}{(p_1-z)(p_0^{-1} + z^{-1})} u_r + \frac{(p_0^{-1}p_1)^{\delta(r < \bar{r})}(z-z^{-1})}{(p_1-z)(p_0^{-1}+z^{-1})} u_{\bar{r}} 
	\end{equation} 
	if $r\neq\bar{r}$.\\
	
	Note that, in the case $\mu=1$, we get $p_0=p_1\eqqcolon p$ and \eqref{eq: explicit K} simplifies to 
	$u_r\mapsto u_{\bar{r}}$ if $r=\bar{r}$
	and 
	\begin{equation} \label{eq: K mat fund1}
		u_r \mapsto
		\frac{(1-p^2)z^{\delta(r < \bar{r})}z^{-\delta(r > \bar{r})}}{z - p^2z^{-1}} u_r + \frac{p(z-z^{-1})}{z-p^2z^{-1}} u_{\bar{r}} 
	\end{equation}
	if $r\neq\bar{r}$.
	
	\subsection{The boundary Schur-Weyl functor}\label{ss:AIII-functor}
	As anticipated in \ref{ex:1}, we briefly describe here one example of the 
	combinatorial model from \S\ref{ss:combinatorics} in this setting.\\ 
	
	We consider a quasi-split affine QSP subalgebra of type $\mathsf{AIII}$ with 
	a choice of parameters as described in \S\ref{ss:AIII-qsp} such that 
	\begin{enumerate}\itemsep0.25cm
		\item the twisting $\psi$ satisfying $\psi^*(\frep{})=\frep{}$ is QSP-admissible
		(cf.~\S\ref{ss:spectral-k}),
		\item $\lambda,\mu\in\bsF^{\times}$ are such that $\lambda\mu, \lambda\mu^{-1}\in
		q^{1/m}\fml{\bbC}{q^{1/m}}$ for some $m>0$.
	\end{enumerate}
	
	Set $J\coloneqq \bbZ_{\scsop{odd}}$ and consider the involution 
	$\qT$ on $J$ given by $\qT(n)=-n$. Note that $\qJT=\emptyset$.
	For any $i\in J$, we set $\qV{n}\coloneqq\frep{}$ and $\qX{n}=q^{n}$.
	
	As observed in \ref{ex:1}, the conditions \ref{cond: Q0}--\ref{cond: Q3} 
	are easily verified to hold. Moreover, by 
	\eqref{eq: RV V1 K-matrix}-\eqref{eq: explicit K}, the K-matrix has only two simple poles
	at $z=\lambda\mu^{-1}$ and $z=\lambda\mu$. Thus, the condition \ref{cond: Q4} follows from the condition 
	(2) above.\\
	
	By Definition~\ref{def:quiver}, we obtain a quiver $\Gamma$ whose nodes are 
	indexed by odd numbers, there is an edge between any 
	two consecutive numbers, and it is equipped with a natural contravariant involution 
	induced from $\qT$, \ie $\Gamma$ is an $\mathsf{A}_{\infty}$ quiver with a non-trivial 
	involution and no fixed points:
	\vspace{0.5cm}
	\begin{center} 
		\begin{tikzpicture}[decoration={markings, mark= at position 0.55 with {\arrow{Stealth[length=2mm]}}}]
			
			\node (NN) at (-5,1.25) [draw, circle, fill=white, scale=0.6, label=below:{\footnotesize$-n$}, label=above:{\footnotesize$(\omega_1,q^{-n})$}] {}; 
			
			
			\node (N1) at (0,1.25) [draw, circle, fill=white, scale=0.6, label=below:{\footnotesize$-5$}, label=above:{\footnotesize$(\omega_1,q^{-5})$}] {}; 
			
			\node (N2) at (2.5,1.25) [draw, circle, fill=white, scale=0.6, label=below:{\footnotesize$-3$}, label=above:{\footnotesize$(\omega_1,q^{-3})$}] {}; 
			\node (N3) at (5,1.25) [draw, circle, fill=white, scale=0.6, label=below:{\footnotesize$-1$}, label=above:{\footnotesize$(\omega_1,q^{-1})$}] {}; 
			\node(N4) at (5,-1.25) [draw, circle, fill=white, scale=0.6, label=above:{\footnotesize$1$}, label=below:{\footnotesize$(\omega_{1},q)$}] {};
			\node (N5) at (2.5,-1.25) [draw, circle, fill=white, scale=0.6, label=above:{\footnotesize$3$}, label=below:{\footnotesize$(\omega_{1},q^3)$}] {};
			
			\node (N6) at (0,-1.25) [draw, circle, fill=white, scale=0.6, label=above:{\footnotesize$5$}, label=below:{\footnotesize$(\omega_{1},q^5)$}] {};
			
			
			\node (NX) at (-5,-1.25) [draw, circle, fill=white, scale=0.6, label=above:{\footnotesize$n$}, label=below:{\footnotesize$(\omega_{1},q^n)$}] {};
			
			\node (D0) at (-7.5,1.25) {$\cdots$};
			\draw[postaction={decorate}] (D0)--(NN);
			\node (DX) at (-7.5,-1.25) {$\cdots$};
			\draw[postaction={decorate}] (NX)--(DX);
			
			\node (D1) at (-2.5,1.25) {$\cdots$};
			\draw[postaction={decorate}] (NN)--(D1);
			\node (D2) at (-2.5,-1.25) {$\cdots$};
			\draw[postaction={decorate}] (D2)--(NX);
			
			\draw[postaction={decorate}] (D1)--(N1);
			\draw[postaction={decorate}] (N1)--(N2);
			\draw[postaction={decorate}] (N2)--(N3);
			\draw (N3) edge[postaction={decorate}, bend left=70] (N4); 
			\draw[postaction={decorate}] (N4)--(N5); 
			\draw[postaction={decorate}] (N5)--(N6); 
			\draw[postaction={decorate}] (N6)--(D2);

			\draw (N1) edge[blue, dashed, shorten >=14pt, shorten <=14pt, <->] (N6);
			\draw (N2) edge[blue, dashed, shorten >=14pt, shorten <=14pt, <->] (N5);
			\draw (N3) edge[blue, dashed, shorten >=14pt, shorten <=14pt, <->] (N4);
			\draw (NN) edge[blue, dashed, shorten >=14pt, shorten <=14pt, <->] (NX);
			
		\end{tikzpicture}
	\end{center}
	\Omit{
	\begin{center} 
		\begin{tikzpicture} \label{pic: invol quiver}
			\node (N1) at (0,0) [draw, circle, fill=white, scale=0.6, label=$-3$, label=below:{$(\frep{},q^{-3})$}] {};
			\node (N2) at (2.5,0) [draw, circle, fill=white, scale=0.6, label=$-1$, label=below:{$(\frep{},q^{-1})$}] {};
			\node(N3) at (5,0) [draw, circle, fill=white, scale=0.6, label=$1$, label=below:{$(\frep{},q)$}] {};
			\node (N4) at (7.5,0) [draw, circle, fill=white, scale=0.6, label=$3$, label=below:{$(\frep{},q^{3})$}] {};
			\draw[-stealth] (N1)--(N2);
			\draw[-stealth] (N2)--(N3);
			\draw[-stealth] (N3)--(N4);
			\node (D0) at (-2.5,0) {$\cdots$};
			\draw[-stealth] (D0)--(N1);
			\node (D5) at (10,0) {$\cdots$};
			\draw[-stealth] (N4)--(D5);
			\draw[stealth-stealth] (5,1) arc
			[
			start angle=0,
			end angle=180,
			x radius=1.25cm,
			y radius =1cm
			] ;
			\draw[stealth-stealth] (7.5,1) arc
			[
			start angle=0,
			end angle=180,
			x radius=3.75cm,
			y radius =1.5cm
			] ;
		\end{tikzpicture}
	\end{center}
}
	Finally, the framing dimension vector $\frv{}$ on $\Gamma$ is non-trivial if 
	and only if $\lambda=q^{n}\mu^{\pm 1}$ for some $n\in \bbZ_{\scsop{odd}}$. In this case, the 
	framing dimension vector is given by $\frv{(m)}=\drv{m,-n}$ for any $m\in \bbZ_{\scsop{odd}}$.
	
	\subsection{Hecke algebras and $\imath/\jmath$Schur--Weyl dualities}
	It is proved in \cite{kang-kashiwara-kim-18, kwon-lee} that the functor $ \mathsf{F}$ 
	recovers Chari-Pressley's quantum affine Schur-Weyl duality \cite{Chari-Pressley-96}
	through the BKR isomorphism between KLR algebras and affine Hecke algebras. 
	We shall prove the analogue result for the functor $\tfun$.\\
	
	More precisely, let $\Uqk\subseteq\UqLsl{N}$ be a quasi-split QSP subalgebra of 
	type $\mathsf{AIII}$ with the choice of parameters described above. 
	A Schur-Weyl duality functor between $\Uqk$ and 
	the 2-parameters affine Hecke algebra of type $\mathsf{C}$
	(cf.~\S\ref{ss:aff-Hecke-C}) was constructed in \cite{FLLLWW}, 
	relying on an explict action of the latter on 
	$\frep{z}^n\coloneqq\shrep{\frep{}}{z_1}\ten\cdots\ten\shrep{\frep{}}{z_n}$.

	\begin{lemma}\label{lem:tfun-FL3W2}
		The action of $\mathcal{H}_{\mathsf{C}_n}(p_0,p_1)$ on $\frep{z}^n$, 
		determined by the assignment 
		\[ \Phi_k \mapsto \rRMv{k,k+1}{z_k,z_{k+1}}, \quad \Phi_0 \mapsto \mathbf{K}_1(z_1), \quad X_l \mapsto z_l, \]
		coincides with the action from \cite[Prop.~2.5]{FLLLWW}. 
	\end{lemma}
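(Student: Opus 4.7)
The plan is to verify the claim by a direct computation on basis vectors, comparing the two prescriptions generator by generator. First I would use the intertwiner formulae \eqref{eq: int C1}--\eqref{eq: int C2} to invert the relationships
\[
T_k = q + \frac{qX_k - q^{-1}X_{k+1}}{X_k - X_{k+1}}(\Phi_k - 1), \qquad
T_0 = p_0 + p_1^{-1}\frac{(X_1+p_0)(X_1-p_1)}{X_1^2-1}(\Phi_0 - 1),
\]
so that the assignment $\Phi_k \mapsto \rRMv{k,k+1}{z_k,z_{k+1}}$, $\Phi_0 \mapsto \mathbf{K}_1(z_1)$, $X_l \mapsto z_l$ determines an explicit action of $T_k$ and $T_0$ on $\frep{z}^n$. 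One then has to check that this action (i) is well defined, i.e.\ satisfies the defining relations of $\mathcal{H}_{\mathsf{C}_n}(p_0,p_1)$, and (ii) agrees with the FLLLWW action.

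For (i), it suffices to observe that Proposition~\ref{pro: Hecke C pol rep} already gives the polynomial realisation of $\mathcal{H}_{\mathsf{C}_n}(p_0,p_1)$ via $\Phi_0, \Phi_k, X_l$, and that our operators $\rRMv{k,k+1}{z_k,z_{k+1}}$ and $\mathbf{K}_1(z_1)$ verify exactly the Yang--Baxter equation \eqref{eq:z-YBE}, the unitarity condition \eqref{eq:unitarity-R}, Cherednik's reflection equation \eqref{eq:rational-original-RE}, and the unitarity \eqref{eq:unitarity-k} ensured by Theorems~\ref{thm:rational-R} and \ref{thm:unitary-k} (as discussed in \S\ref{ss:fund-rep-K-mx}). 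Together with the correct eigenvalues on diagonal basis vectors (which we verify in the next step), these imply all the relations of $\mathcal{H}_{\mathsf{C}_n}(p_0,p_1)$.

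For (ii), I would carry out the verification on the natural basis $u_{r_1} \otimes \cdots \otimes u_{r_n}$ using the explicit formulae \eqref{eq: R mat fund1} for $\rRMv{}{z,w}$ and \eqref{eq: explicit K matrix 2} (respectively \eqref{eq: explicit K}) for $\mathbf{K}_1(z)$. Inserting these into the inversion formulae above yields closed rational expressions for $T_k$ and $T_0$. It is straightforward to check that, on the diagonal basis vectors, $T_k - q$ vanishes when $r_k = r_{k+1}$ (thanks to $\rRMv{}{1} = \id$) and that $T_0 - p_0$ vanishes when $r_1 = \bar r_1$ (thanks to $\rKM{}{1} = \id$), which gives the correct eigenvalues in the quadratic relations; the off-diagonal entries reduce after cancellation to the permutation-plus-rational-coefficient formulae in \cite[Prop.~2.5]{FLLLWW}. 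The main technical obstacle, and therefore the step requiring the most care, is the bookkeeping of parameter conventions: Remark~\ref{rem: conv Kato VV} and the identification $p_0 = \lambda\mu^{-1}$, $p_1 = \lambda\mu$ from \S\ref{ss:AIII-qsp} must be tracked through the inversion formulae so that our poles at $z = p_1$ and $z = -p_0^{-1}$ match FLLLWW's, and the distinction between the $\tau$-restrictable and non-restrictable Satake diagrams \eqref{eq: Satake aff diag 3}, \eqref{eq: Satake aff diag 1}, \eqref{eq: Satake aff diag 2} must be handled uniformly, using that in all three cases the explicit K-matrix has the same shape \eqref{eq: explicit K matrix 2}--\eqref{eq: explicit K} up to the position of the fixed index $r = \bar r$.
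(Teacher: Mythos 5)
Your approach matches the paper's: substitute the explicit R- and K-matrix formulae \eqref{eq: R mat fund1}, \eqref{eq: explicit K matrix 2}, \eqref{eq: explicit K} into the intertwiner formulae \eqref{eq: int C1}--\eqref{eq: int C2} and compare term by term with \cite[\S2.4]{FLLLWW}, keeping track of the parameter identifications. Your well-definedness step (i) is harmless but redundant --- it is already guaranteed by the $\skewr$-module structure established in Theorem~\ref{thm: bimodule} together with Proposition~\ref{pro: Hecke C pol rep}(1) --- and one additional convention point to track (which the paper's footnote flags) is that the FLLLWW formulae are written in terms of the left-coideal $\omega(\Uqk)$ rather than $\Uqk$.
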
 
	
	\begin{proof} 
		A formula for the action of the generators $T_i$ is obtained by substituting the expressions \eqref{eq: R mat fund1}, \eqref{eq: explicit K matrix 2} and \eqref{eq: explicit K} for R- and K-matrices into the formulas \eqref{eq: int C1}--\eqref{eq: int C2}  for the intertwiners. The lemma then follows from a straightforward comparison with the formulae in \cite[\S2.4]{FLLLWW}\footnote{More precisely, it is also necessary to take into account that the formulae in \cite{FLLLWW} are given in terms of the left-coideal subalgebra $\omega(\Uqk)$.}.
	\end{proof} 
	
	Let
	\[
	\fun_{\scsop{AHC}} \colon \modv{\mathcal{H}_{\mathsf{C}_n}(p_0,p_1)} \to \modv{\Uqk}, \quad M \mapsto \frep{z}^n \otimes_{\mathcal{H}_{\mathsf{C}_n}(p_0,p_1)} M 
	\]
	be the functor defined by the bimodule $\frep{z}^n$. 
	For any $\beta \in N[J]^\theta$, let $\modv{\oklr}_0$ be the full subcategory of
	$\modv{\oklr}$ whose objects are modules with a locally nilpotent action of the 
	elements $x_i$. Also let $\modv{\mathcal{H}_{\mathsf{C}_n}(p_0,p_1)}_\beta$ 
	be the full subcategory of $\mathcal{H}_{\mathsf{C}_n}(p_0,p_1)$-modules such 
	that the action of the $X_i^{\pm 1}$ is locally nilpotent and their multiset of eigenvalues
	equals $\beta$. 
	
	\begin{theorem}\label{thm:ij-schur}
		Let $\Norm{\beta}_\theta = n$. The following diagram commute
		\begin{equation}
			\begin{tikzcd}
				\modv{\oklr\otimes_{\tbP_{\beta}} \tbPh_{\beta}} \arrow[rr,"\sim"] \arrow[ddr, dashed]& &
				\modv{\mathcal{H}_{\mathsf{C}_n}(p_0,p_1)\otimes_{\mathcal{O}} \tOhb}
				\arrow[d, "\wr"] \arrow[ddl, dashed]\\
				\modv{\oklr}_0 \arrow[dr,"\tfun_\beta"'] \arrow[u, "\wr"]& & \modv{\mathcal{H}_{\mathsf{C}_n}(p_0,p_1)}_\beta\arrow[dl, "\fun_{\scsop{AHC}}"]\\
				& \modv{\Uqk} &
			\end{tikzcd}
		\end{equation}
	\end{theorem}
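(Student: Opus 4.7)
The proof will proceed in three stages matching the structure of the diagram. First, the top horizontal equivalence is induced directly by the BKR algebra isomorphism of Theorem~\ref{thm: bkr iso C}. The two vertical equivalences are the standard ones relating modules on which a polynomial subalgebra acts locally nilpotently to modules over the corresponding completion: on the $o$KLR side, a module $M \in \modv{\oklr}_0$ decomposes as $\bigoplus_\nu e(\nu)M$ with each $x_l$ acting topologically nilpotently, hence $M$ canonically extends to a module over $\oklr \otimes_{\tbP_\beta} \tbPh_\beta$; on the Hecke side the same argument applies with $X_l - X(\nu_l)$ in place of $x_l$, and the eigenvalue condition encoded by $\beta$ matches with the decomposition by idempotents $e(\nu)$ under the change of variables \eqref{x-X map}.

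The substantial task is to verify commutativity of the outer triangles, which amounts to identifying, via the BKR isomorphism and modulo completion, the $(\Uqk, \oklr)$-bimodule $\tbV[\beta]$ with the $(\Uqk, \mathcal{H}_{\mathsf{C}_n}(p_0,p_1))$-bimodule $\frep{z}^n$. The underlying vector space identification is essentially by construction: for each $\nu \in \comp$, one has $\hV_\nu = \Oh_\nu \otimes_\mathcal{O} (\frep{z}^n e(\nu))$, with the spectral parameter $z_l$ specializing to $X(\nu_l) = q^{\nu_l}$, and the left $\Uqk$-action on both bimodules coincides with the restriction of the standard $\UqLg$-coproduct action on $\frep{z}^n$. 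What remains is to compare the two right actions through the BKR isomorphism.

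By Lemma~\ref{lem:tfun-FL3W2}, the intertwiners $\Phi_k$ and $\Phi_0$ act on $\frep{z}^n$ as the normalized R-matrix $\rRMv{k, k+1}{z_k, z_{k+1}}$ and the K-matrix $\mathbf{K}_1(z_1)$, respectively. By Theorem~\ref{thm: bimodule}, the generators $\tau_k$ and $\tau_0$ act on $\tbV[\beta]$ through the same R- and K-matrices, composed with the rational scalar normalizations $c_{ij}$ and $c_i$ that define the embedding \eqref{eq: oklr loc}. Comparing these formulae against the explicit expressions in Theorem~\ref{thm: bkr iso C}, which write each $\tau_k e(\nu)$ and $\tau_0 e(\nu)$ as an explicit rational scalar multiple of $\Phi_k$ and $\Phi_0$, one checks directly that the scalar prefactors match and hence the two bimodule structures agree after completion. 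The triangles then commute because both composite functors coincide with tensor product against this common bimodule.

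The main obstacle will be the careful bookkeeping of the three families of variables (the KLR variables $x_l$, the spectral parameters $z_{\nu,l}$, and the Hecke variables $X_l$) together with the normalizations $c_{ij}(u,v)$ and $c_i(u)$ from \eqref{eq: c-ij}--\eqref{eq: c-i}. These must be fixed so that the rational prefactors introduced by Theorem~\ref{thm: bkr iso C} precisely absorb the normalizations built into the right $\oklr$-action on $\tbV[\beta]$; the requisite choice is dictated by the explicit type $\mathsf{AIII}$ K-matrix formulas collected in \S\ref{ss:AIII-qsp}. An analogous bookkeeping underlies the corresponding result for the KLR algebra and affine Hecke algebra of type $\mathsf{A}$ (cf.\ \cite{kang-kashiwara-kim-18,kwon-lee}); the present argument is the boundary extension of that one, with the $\tau_0$-relation verified using the explicit form of $\mathbf{K}_1(z_1)$.
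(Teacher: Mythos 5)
Your proposal is correct and follows essentially the same route as the paper. You identify the top horizontal equivalence as induced by the BKR isomorphism of Theorem~\ref{thm: bkr iso C}, the vertical equivalences as the standard completion equivalences, and you reduce the content to matching the two bimodules $\tbV[\beta]$ (after completion) and $\frep{z}^n$ through Lemma~\ref{lem:tfun-FL3W2} together with the explicit formulae of the BKR isomorphism; this is exactly the chain of reductions the paper uses, just spelled out in more detail. Your cautionary remark about the normalizations $c_{ij}$, $c_i$ and the rational prefactors is a legitimate point that the paper elides with ``follows directly'', and your resolution (that these must be chosen compatibly, as dictated by the explicit type $\mathsf{AIII}$ K-matrix formulas) is the right one.
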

	
	\begin{proof}
		The diagram is composed of three triangles.  
		The commutativity of the side ones follows immediately from the definition 
		of the completions $\tbPh_{\beta}$ and $\tOhb$ (cf.~\S\ref{ss:field-completions}). 
		The commutativity of the triangle in the middle square follows directly from the BKR-type
		isomorphism constructed in Theorem \ref{thm: bkr iso C} and Lemma~\ref{lem:tfun-FL3W2}. 
	\end{proof}
	
	In the case of a restrictable QSP subalgebra, we further recover a finite-type Schur-Weyl
	duality between the Hecke algebra of type $\sfB$ and a finite-type QSP subalgebra in $U_q\mathfrak{sl}_N$. More precisely, let $\Uqk\in\UqLsl{N}$ be a restrictable affine QSP subalgebra from \S\ref{sec: restr case}. Then, $\Uqk^{\scsop{fin}}\coloneqq
	\Uqk\cap U_q\mathfrak{sl}_N$ is the QSP subalgebra corresponding to the finite-type
	Satake diagram obtained from \eqref{eq: Satake aff diag 1} (for $N$ even)
	or \eqref{eq: Satake aff diag 2} (for $N$ odd) by removing the affine node.\\ 
	
	Consider the operators $\mathbf{R}^{\scsop{fin}} \colon \frep{}^{\otimes 2} 
	\to\frep{}^{\ten 2}$ and $\mathbf{K}^{\scsop{fin}} \colon \frep{} \to\frep{}$ on the fundamental representation given respectively by
	\begin{alignat}{3}
		\label{eq: finite R mat}
		u_r \otimes u_s \mapsto& \ \left\{ \begin{array}{ll}
			u_s \otimes u_r & \textrm{ if } r > s, \\ 
			u_s \otimes u_r + (q-q^{-1}) u_r \otimes u_s & \textrm{ if } r < s, \\ 
			q u_r \otimes u_r & \textrm{ if } r = s,
		\end{array}\right. \\
		\label{eq: finite K mat}
		u_r \mapsto& \ \left\{ \begin{array}{ll}
			u_{\bar{r}}  & \textrm{ if } r < \bar{r}, \\ 
			u_{\bar{r}} + (p-p^{-1})u_r \qquad \qquad \ & \textrm{ if } r > \bar{r}, \\
			p u_r  & \textrm{ if } r = \bar{r}. 
		\end{array}\right. 
	\end{alignat} 
	By \cite[Thms. 2.6 and 4.4]{Bao-Wang-Watanabe}, the assignment
	\[ T_0 \mapsto \mathbf{K}^{\scsop{fin}}_{1}, \qquad 
	T_k \mapsto \mathbf{R}^{\scsop{fin}}_{k,k+1} \quad (1 \leqslant k \leqslant n-1)\] 
	defines a $\bl \Uqk^{\scsop{fin}}, \mathcal{H}^{\scsop{fin}}_{\mathsf{B}_n}(p) \br$-bimodule structure on $\frep{}^{\otimes n}$, where $ \mathcal{H}^{\scsop{fin}}_{\mathsf{B}_n}(p)$
	is the finite Hecke algebra of type $\sfB_n$. Let
	\[
	\fun_{\scsop{HB}} \colon \modv{\mathcal{H}^{\scsop{fin}}_{\mathsf{B}_n}(p)} 
	\to \modv{\Uqk^{\scsop{fin}}}, \quad M \mapsto \frep{}^{\otimes n} \otimes_{\HecBf} M 
	\]
	be the functor induced by the bimodule structure on $\frep{}^{\ten n}$
	(cf.~\cite{Watanabe-20}).
	
	\begin{corollary}
		The diagram
		\begin{equation}
			\begin{tikzcd}
				\modv{\oklr}_0 \arrow[r,"\tfun_\beta"] \arrow[d, "\wr"]&  \modv{\Uqk} \arrow[d, equal]\\ \modv{\mathcal{H}_{\mathsf{C}_n}(p_0,p_1)}_\beta\arrow[r, "\fun_{\scsop{AHC}}"]
				\arrow[d, "p_0=p_1", "\operatorname{res}"'] &  \modv{\Uqk} \arrow[d, "\mu=1", "\operatorname{res}"']\\
				\modv{\mathcal{H}^{\scsop{fin}}_{\mathsf{B}_n}(p)} \arrow[r, "\fun_{\scsop{HB}}"'] &  \modv{\Uqk^{\scsop{fin}}}
			\end{tikzcd}
		\end{equation}
		where the upper vertical arrow on the left is as in Theorem~\ref{thm:ij-schur} and
		the bottom vertical arrows are given by restriction under the additional 
		assumption $p_0=p_1$ and $\mu=1$ (cf.~\S\ref{sec: restr case}), is commutative.
	\end{corollary}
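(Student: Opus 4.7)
The strategy is two-fold: the upper square commutes as an immediate consequence of Theorem~\ref{thm:ij-schur}, which identifies $\tfun_\beta$ with $\fun_{\scsop{AHC}}$ via the left vertical equivalence. So the real work is to check the commutativity of the lower square, namely, that for $M \in \modv{\mathcal{H}_{\mathsf{C}_n}(p_0,p_1)}_\beta$, the specializations $p_0 = p_1 = p$ and $\mu = 1$ produce a natural isomorphism of $\Uqk^{\scsop{fin}}$-modules
\[
\bigl(\frep{z}^n \otimes_{\mathcal{H}_{\mathsf{B}_n}(p)} M \bigr)\bigl|_{\Uqk^{\scsop{fin}}} \;\cong\; \frep{}^{\otimes n} \otimes_{\HecBf} \bigl(M\bigl|_{\HecBf}\bigr).
\]
The plan is to reduce this to showing that, after specialization, the vector space identification $\frep{z}^n = \frep{}^{\otimes n}$ is an isomorphism of $(\Uqk^{\scsop{fin}}, \HecBf)$-bimodules; the statement of the corollary then follows by tensoring with $M$ and using compatibility of restriction along $\HecBf \subset \mathcal{H}_{\mathsf{B}_n}(p)$ with the parameter specialization.

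The first, and easier, comparison is the $\Uqk^{\scsop{fin}}$-action. Since $\Uqk^{\scsop{fin}} \subset U_q\mathfrak{sl}_N$ is built from the generators $K_rK_{\bar r - 1}^{-1}$ and $B_r$ with $r \neq N$ (none of which involve $E_0$ or $F_0$), and the grading shift $\tshift{z}$ from \S\ref{ss:tau-grading} acts trivially on $U_q\mathfrak{sl}_N \subset \UqLsl{N}$, the restriction of the shifted action $\pi_{\frep{},z_i}$ to $\Uqk^{\scsop{fin}}$ coincides with the standard $\Uqk^{\scsop{fin}}$-action on $\frep{}$. Iterating the finite coproduct then identifies the tensor product $\Uqk^{\scsop{fin}}$-actions on $\frep{z}^n$ and $\frep{}^{\otimes n}$.

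The second, and main, comparison is the $\HecBf$-action. By Lemma~\ref{lem:tfun-FL3W2}, the intertwiners act on $\frep{z}^n$ as $\Phi_k = \rRMv{k,k+1}{z_k,z_{k+1}}$ and $\Phi_0 = \mathbf{K}_1(z_1)$. Inverting \eqref{eq: int C1}--\eqref{eq: int C2}, with $X_l$ acting as $z_l$, the Hecke generators are expressed as
\[
T_k = q + \tfrac{qz_k - q^{-1}z_{k+1}}{z_k - z_{k+1}}(\Phi_k - 1), \qquad T_0 = p + p^{-1} \tfrac{(z_1 + p)(z_1 - p)}{z_1^2 - 1}(\Phi_0 - 1).
\]
A direct case analysis on the basis vectors $u_{r_1} \otimes \cdots \otimes u_{r_n}$, using the explicit formula \eqref{eq: R mat fund1} for the R-matrix and the $\mu = 1$ specialization \eqref{eq: K mat fund1} for the K-matrix, shows that these expressions coincide with $\mathbf{R}^{\scsop{fin}}_{k,k+1}$ in \eqref{eq: finite R mat} and $\mathbf{K}^{\scsop{fin}}_1$ in \eqref{eq: finite K mat}, respectively. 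The crucial—and only genuine—point is that all apparent dependence on the spectral parameters $z_1,\ldots,z_n$ cancels in the final formula. Concretely, the factorization $qz_k - q^{-1}z_{k+1} = -q^{-1}(z_{k+1} - q^2 z_k)$ kills the denominator $z_{k+1} - q^2 z_k$ appearing in $\Phi_k - 1$, and an analogous factorization kills the denominator of $\Phi_0 - 1$ when $p_0 = p_1 = p$.

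Combining these two identifications gives the required isomorphism of $(\Uqk^{\scsop{fin}}, \HecBf)$-bimodules $\frep{z}^n \cong \frep{}^{\otimes n}$, and hence the natural isomorphism of functors $\fun_{\scsop{AHC}}(-)|_{\Uqk^{\scsop{fin}}} \cong \fun_{\scsop{HB}}((-)|_{\HecBf})$ upon tensoring with $M$. The main obstacle is the cancellation described above; it is elementary but requires a careful case-by-case check of three cases for $T_k$ (namely $r < s$, $r > s$, $r = s$) and three cases for $T_0$ (namely $r < \bar r$, $r > \bar r$, $r = \bar r$).
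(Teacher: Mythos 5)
Your computational core is exactly the paper's: at $p_0 = p_1 = p$, $\mu = 1$, substitute the explicit R-matrix \eqref{eq: R mat fund1} and K-matrix \eqref{eq: K mat fund1} into the intertwiner formulae \eqref{eq: int C1}--\eqref{eq: int C2}, and observe that all spectral-parameter dependence cancels, producing precisely $\mathbf{R}^{\scsop{fin}}$ and $\mathbf{K}^{\scsop{fin}}$. Your verification of that cancellation is correct and is the heart of the matter.

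However, your framing of the lower square has a gap. You claim that ``the vector space identification $\frep{z}^n = \frep{}^{\otimes n}$ is an isomorphism of $(\Uqk^{\scsop{fin}}, \HecBf)$-bimodules'' and then that the corollary ``follows by tensoring with $M$.'' First, the identification is not literally one of vector spaces: $\frep{z}^n = \shrep{\frep{}}{z_1}\ten\cdots\ten\shrep{\frep{}}{z_n}$ carries the extra $\bsF(z_l)$ factors on which $X_l$ acts, and is infinite-dimensional over $\cor$. Second, and more importantly, an isomorphism of $(\Uqk^{\scsop{fin}}, \HecBf)$-bimodules is \emph{not} sufficient to conclude
\[
\frep{z}^n \otimes_{\HecB} M \;\cong\; \frep{}^{\otimes n} \otimes_{\HecBf} M,
\]
because the left-hand tensor is taken over $\HecB$ and the right-hand one over the subalgebra $\HecBf$. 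What one actually needs is an isomorphism of $(\Uqk^{\scsop{fin}}, \HecB\otimes_{\mathcal{O}} \tOhb)$-bimodules exhibiting the carrier of $\tfun_\beta$ as the \emph{induced} bimodule
\[
\tbV[{\beta}] \;\cong\; \frep{}^{\otimes n} \otimes_{\HecBf} \bigl(\HecB\otimes_{\mathcal{O}} \tOhb\bigr),
\]
which is what the paper establishes; then canceling $\HecB\otimes_{\mathcal{O}} \tOhb$ against $M$ gives the result in one line. The spectral-parameter-independence of the $T_k, T_0$-actions that you verify is \emph{exactly} the fact that guarantees this induction structure, so your computation can be repackaged to close the gap — but as written, the ``tensoring with $M$'' step does not follow from the weaker bimodule statement you set out to prove. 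Also worth noting: the paper works directly with $\tbV[\beta]$ and the BKR isomorphism of Theorem~\ref{thm: bkr iso C}, rather than routing through Theorem~\ref{thm:ij-schur}; the two are equivalent, but the direct route makes the completion $\tOhb$ visible and removes the imprecision above.
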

	
	\begin{proof}
		Let $\tbV[{ \beta }]$ be the module defined in \S\ref{ss:boundary-SW} with respect to the
		combinatorial model described in \S\ref{ss:AIII-functor}.
		Relying on \eqref{eq: R mat fund1}, \eqref{eq: K mat fund1},  Theorem \ref{thm: bkr iso C} and \eqref{eq: finite R mat}-\eqref{eq: finite K mat}, one  shows by direct inspection that 
		\[ \tbV[{ \beta }] \cong \frep{}^{\otimes n} \otimes_{\mathcal{H}^{\scsop{fin}}_{\mathsf{B}_n}(p)} \HecB\otimes_{\mathcal{O}} \tOhb \]
		as $(\Uqk^{\scsop{fin}},  \HecB\otimes_{\mathcal{O}} \tOhb)$-bimodules. Hence, for any $M$ in $\modv{\oklr}_0$, we have 
		\[ \tfun_\beta(M) = \tbV[{ \beta }] \otimes_{\HecB\otimes_{\mathcal{O}} \tOhb} M \cong \frep{}^{\otimes n} \otimes_{\mathcal{H}^{\scsop{fin}}_{\mathsf{B}_n}(p)} M = \fun_{\scsop{HB}}(M). \] 
	\end{proof}
	
	\begin{remark}
		Relying on the explicit formulae for the K-matrix provided in \cite[\S 9.3]{regelskis-vlaar-16} and \cite{Shen-Wang}, a similar computation shows that
		the same result holds for arbitrary affine QSP subalgebras of type  $\mathsf{AIII}$.
		It is also expected that one can recover the Schur-Weyl functor with the 2-parameter affine 
		Hecke algebra of type $\sfB$ defined in \cite{chen-guay-ma-14}. This is, however, harder to verify since \cite{chen-guay-ma-14} relies on the FRT presentation of QSP subalgebras. 
		\rmkend
	\end{remark}

	\bibliographystyle{myamsalpha}
	\newcommand{\etalchar}[1]{$^{#1}$}

\end{document}